\newcounter{alphcount}
{\begin{list}{{\upshape(}\alph{alphcount}\/{\upshape)\ }}%
             {\usecounter{alphcount}\labelwidth1.5em%
              \bigmargin2em\labelsep0.5em\topsep0.25em plus 0.5ex%
              \itemsep0.25em plus 0.5ex\parsep0em}}{\end{list}}
{\begin{list}{{\upshape(#1\arabic{alphcount})\hfill}}%
             {\usecounter{alphcount}\labelwidth2.5em%
              \bigmargin2.5em\labelsep0em\topsep0.25em plus 0.5ex%
              \itemsep0.25em plus 0.5ex\parsep0em}}{\end{list}}
\newcommand\tageq{\addtocounter{equation}{1}\tag{\theequation}}
\newcommand{\mynewtheorem}[2]{
  \newaliascnt{#1}{dummy}
  \newtheorem{#1}[#1]{#2}
  \aliascntresetthe{#1}
  \expandafter\def\csname #1autorefname\endcsname{#2}
}
\theoremstyle{plain}
\theoremstyle{definition}
\theoremstyle{remark}
\newcommand{\td}{\tilde{\partial}}
\newcommand{\mb}[1]{\mathbb{#1}}
\newcommand{\mc}[1]{\mathcal{#1}}
\newcommand{\znum}{\mathbb{Z}}
\newcommand{\rnum}{\mathbb{R}}
\newcommand{\nnum}{\mathbb{N}}
\newcommand{\E}{\mathbb{E}}
\newcommand{\pr}{       \mathbb{P}}
\newcommand{\Cov}{\text{Cov}}
\newcommand{\flo}[1]{\lfloor#1\rfloor}
\newcommand{\F}{\mathcal{F}}
\newcommand{\G}{\mathcal{G}}
\newcommand{\bor}{\mathcal{B}}
\newcommand{\si}{$\sigma$}
\newcommand{\PH}{PH}
\newcommand{\ceil}[1]{\lceil#1\rceil}
\newcommand{\VR}{\mathrm{VR}}
\newcommand{\xu}[1]{\tilde{\mb{X}}_{#1}(\frac{#1}{T})}
\newcommand{\xum}[1]{\tilde{\mb{X}}_{m,#1}(\frac{#1}{T})}
\newcommand{\bv}{\Big\vert}
\newcommand{\hti}{\mathfrak{h}}
\newcommand{\sh}{\Gamma}
\newcommand{\longsquiggly}{\xymatrix@C=1.5em{{}\ar@{~>}[r]&{}}}
\newcommand{\medsquiggly}{\xymatrix@C=1.2em{{}\ar@{~>}[r]&{}}}
\providecommand{\AMS}[1]{\textbf{\textit{AMS subject classification: }} #1}
\newcounter{relctr} %% <- counter for relations
\everydisplay\expandafter{\the\everydisplay\setcounter{relctr}{0}} %% <- reset every eq
\begin{document}

\begin{frontmatter}

\title{A statistical framework for analyzing shape in a time series of random geometric objects}
\begin{aug}
\author[A]{\fnms{Anne} \snm{van Delft}\ead[label=e1,mark]{anne.vandelft@columbia.edu}}
\and
\author[B]{\fnms{Andrew J.} \snm{Blumberg}\ead[label=e2]{andrew.blumberg@columbia.edu}}
%%%%%%%%%%%%%%%%%%%%%%%%%%%%%%%%%%%%%%%%%%%%%%
%% Addresses                                %%
%%%%%%%%%%%%%%%%%%%%%%%%%%%%%%%%%%%%%%%%%%%%%%
\address[A]{Department of Statistics, Columbia University, 1255 Amsterdam Avenue, New York, NY 10027, USA.\\ \printead{e1}}

\address[B]{Irving Institute for Cancer Dynamics, Columbia University, 1190 Amsterdam Avenue, New York, NY, 10027, USA.\\ \printead{e2}}

\end{aug}

\begin{abstract}
We introduce a new framework to analyze shape descriptors that capture
the geometric features of an ensemble of point clouds. At the core of
our approach is the point of view that the data arises as sampled
recordings from a metric space-valued stochastic process, possibly of
nonstationary nature, thereby integrating geometric data analysis into
the realm of functional time series analysis.  Our framework allows
for natural incorporation of spatial-temporal dynamics, heterogeneous
sampling, and the study of convergence rates. Further, we derive
complete invariants for classes of metric space-valued stochastic
processes in the spirit of Gromov, and relate these invariants to
so-called ball volume processes. Under mild dependence conditions, a
weak invariance principle in $D([0,1]\times [0,\mathscr{R}])$ is
established for sequential empirical versions of the latter, assuming
the probabilistic structure possibly changes over time. Finally, we
use this result to introduce novel test statistics for topological
change, which are distribution-free in the limit under the hypothesis
of stationarity.  We explore these test statistics on time series of
single-cell mRNA expression data, using shape descriptors coming from
topological data analysis.
\end{abstract}

\AMS{Primary 62M99, 62R20, 62R40; secondary  60B05, 60F17, 62M10}

\begin{keyword}
\kwd{topological data analysis}
\kwd{functional data analysis}
\kwd{persistent homology}
\kwd{locally stationary processes}
\kwd{$U$-statistics}
\end{keyword}
\end{frontmatter}

%\tableofcontents

\section{Introduction}\label{sec1}

Geometric data analysis is concerned with rigorously quantifying and analyzing ``shape''. The data is usually presented as an ensemble of ``point clouds'', i.e., finite metric spaces $(\tilde{\mb{X}}^{n}_t)_{t=1}^T$ that represent a collection of observations where each $\tilde{\mb{X}}^{n}_t$ arises from sampling $n$ points from an underlying geometric object ${\mb{X}}_t$. The most familiar and widely used method in  geometric data analysis is clustering, yet this only captures coarse geometric information.  Thus, over the past 20 years there has been intensive work on more sophisticated ways of capturing shape information (e.g., via manifold learning~\cite{pol22,isomap, lle, meila24}, topological data analysis (TDA)~\cite{c14,c09,eh08, eh10,  flrwbs14, o15}, and optimal transport~\cite{villani09, peyre19}). However,  statistical methodology suitable for exploiting these sophisticated shape descriptors is still in its infancy, and available methods rely on the iid sampling paradigm. 

Yet the ability to model, analyze and predict the evolution over time of the geometric features of data is of paramount interest in many applications. For example, cell differentiation can be studied by analyzing time series of single-cell mRNA expression data (scRNA); a core problem here is to quantify changes in gene expression profiles for cells collected during the process of development.  Specifically, the data looks like a sample $(X_t)_{t=1}^T$ where $X_t \subset \mathbb{R}^N$ for $N$ in the tens of thousands. Cell type is captured in part by the cluster structure of the point clouds of expression vectors.  Changes in the shape of these point clouds reflect differentiation events such as the emergence of new cell types.  More precisely, so-called bifurcation events reflect when an ancestral cell type  changes into multiple lineages, and can be detected by change in shape.  Examples of studies of this kind include~\cite{sstc19}, which profiles several hundred thousand cells from  mouse embryonic fibroblasts and provides evidence that shape provides insight into developmental trajectories.  A toy example of a developmental process is given by modeling the genomic profiles of cells as generated by sampling from a superposition of two spherical Gaussians with centers moving apart over time.  This represents the emergence of two distinct cell types from undifferentiated stem cells; see Figure~\ref{fig:toy-example}.

\begin{figure}[t]
\centering
\includegraphics[scale=0.25]{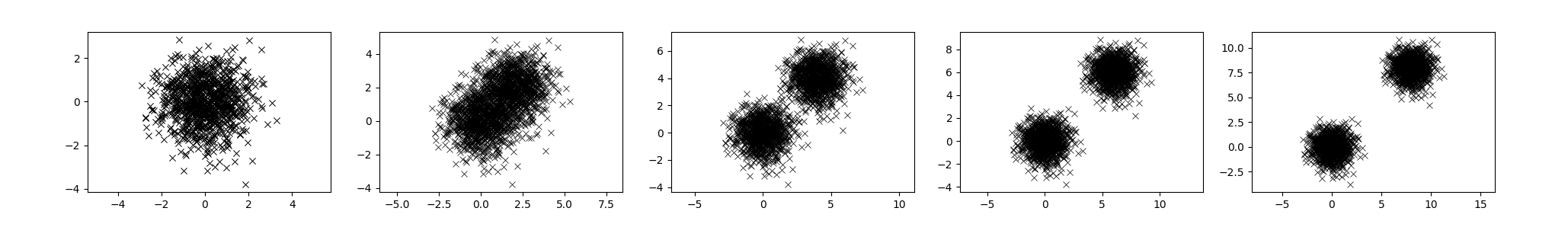}
\caption{Differentiated cell types emerging over time.}
\label{fig:toy-example}
\end{figure}

The need for reliable inference on shape and topological features in applications has led to  substantial interest in integrating classical statistical techniques with topological invariants. Roughly speaking, TDA provides qualitative multiscale shape descriptors for point clouds, notably {\em persistent homology}. This is a higher-dimensional generalization of hierarchical clustering, encoding the feature scales at which ``holes'' of various dimensions appear and disappear. We refer the unfamiliar reader to Section \ref{sec:revTDA}.  Key was the early work~\cite{mmh11, bgmp14}, which established  that the space where these topological  shape descriptors (known as barcodes or persistence diagrams) take their values is Polish.
 
In fact, an important issue from the point of view of inference is that sophisticated shape descriptors such as those in TDA take values in Polish spaces that lack a vector space structure. 
%For example, the set of hierarchical clustering dendrograms have a metric space structure that is not normed. 
  Unfortunately, Polish spaces arising as targets for shape descriptors are hard to work with directly. In particular, the lack of a vector space structure means that concise summaries of the distribution such as moments are not available, and generalized notions such as Fr{\'e}chet means are not unique and hard to compute. To avoid some of the issues of dealing with Polish-valued data, there has been interest in techniques for embedding the shape descriptors into Banach spaces \cite{bub15a,aetal17}. 
However, what these descriptors capture of the shape of the original geometric object is unclear.
    
    Furthermore, the literature relies heavily on having an iid sample of point clouds, which is often not justified, and therefore can lead to invalid inference. Although some interesting work exists on using shape descriptors in the analysis of univariate 
(deterministic) time series data (e.g., see~\cite{ph15,p19, mmk19,
  xatz21}), explicit statistical foundations are not provided. Indeed, many statistical questions
 related to convergence rates and non-asymptotic error bounds are unexplored, and crucial questions such as what dynamics of the underlying process are preserved by such  shape descriptors have been left unanswered. 
  To the best of the authors' knowledge, there is no systematic theory to perform statistical inference for topological or geometric features in the presence of temporal dynamics which potentially evolve over time, either abruptly or
gradually.  However, data sets arising in diverse application areas
are nonstationary and have complicated dependence structures.

The aim of this article is to introduce a new framework that addresses
these issues in an intrinsic way.  We provide statistical foundations
for applying a wide variety of shape descriptors to capture the
geometric features in temporal-spatial data sets. Broadly speaking,
our framework integrates statistical methodology and geometric data
analysis in the context of \textit{functional time series} by viewing
the data as arising from a time-varying metric space-valued stochastic
process.  
The perspective here is that the fundamental
datum is a function (i.e., the observations are points in a function
space), enabling the development of statistical tools that account for
the underlying structure.  Even though the burgeoning literature on functional time series (e.g., see~\cite{b00,hk,detkokvol2020,HyndShang09, pt12,vDD22}
and the references therein) focuses on
processes with elements in normed vector spaces, mainly Hilbert spaces, %(or embeds them into
%one)
its conceptual essence encompasses processes with elements in
metric spaces, and thus naturally supports the analysis of geometric
features.  

By viewing an ensemble of point clouds as arising from an underlying time-varying metric space-valued stochastic
process, we put forward a comprehensive  theory. In \autoref{sec2}, we start by introducing a notion of
locally stationary metric space-valued processes, which enables
development of statistical  methodology even if the probabilistic structure of
the process changes over time. Within this setting, we address fundamental questions and provide the groundwork for meaningful inference on invariants that capture shape of the latent process via point clouds. We explain the general nature of the shape descriptors that fit into our framework, show  what
distributional properties of the process are preserved on the space of
shape descriptors, and 
introduce mild assumptions on the sampling regime 
that not only ensure inference drawn based on point clouds is consistent for the geometric features of the latent process, but also that enable the study of the corresponding level of statistical accuracy. 

Our framework allows us to develop suitable statistical inference techniques which capture the distributional properties but are not constructed directly on the underlying Polish space, thereby avoiding the aforementioned difficulties. This part of our theory consists of two foundational results (\autoref{sec3}).

 First, we establish a result that is reminiscent of Gromov's reconstruction theorem 
~\citep[3$\frac{1}{2}.5$]{Gromov}. This theorem establishes that
metric measure spaces (i.e., metric spaces equipped with a measure on
the Borel $\sigma$-algebra) are completely characterized up to
isomorphism by the infinite-dimensional distance matrix distribution
resulting from iid random sampling. Geometric data analysis depends on
this assumption in the sense that the invariants computed are almost
always derived from approximations of the finite-dimensional distance
matrix distributions. To enable the analysis of non-iid processes, we introduce a reconstruction theorem for metric
space-valued stochastic processes (\autoref{ergodiccase}). For this,
we propose equivalence classes of what we call
\textit{metric measure-preserving dynamical systems} (\autoref{def1}), which can be viewed as generalizations of metric measure spaces.  This then provides a foundation for using distance-based shape descriptors.  In the case of our toy example, this implies that the corresponding process of time-varying distance matrices completely characterize the underlying geometry.

\begin{figure}[t]
\centering
\includegraphics[scale=0.27]{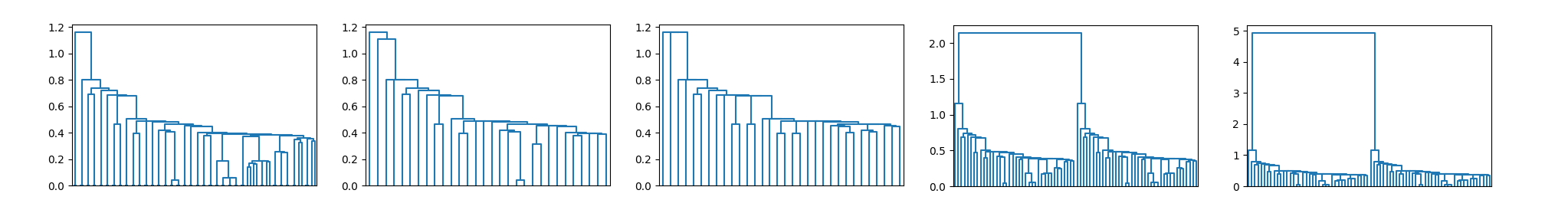}
\caption{Hierarchical clustering dendrograms reveal the change in shape of the developmental process.  Each panel corresponds to the spherical Gaussians from \autoref{fig:toy-example}; as they separate, cluster structure emerges.}
\label{fig:toy-example-dendrograms}
\end{figure}

\begin{figure}[t]
\centering
\includegraphics[scale=0.4]{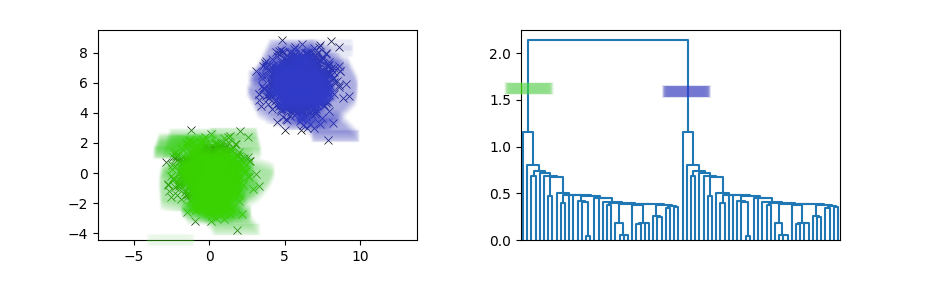}
\caption{Each slice of the dendrogram corresponds to a clustering of the data.}
\label{fig:toy-example-slices}
\end{figure}

Although it is possible to apply this characterization directly for inference, we can often use a more direct approach.  Specifically, our second result  establishes general conditions under which the geometric features of a stochastic process can in fact be fully characterized by the process of ball volumes (\autoref{thm:charmux}).  The idea is that,  although the process generating the point clouds might not satisfy these conditions, the induced process on the space of values for a shape descriptor (such as those mentioned above) will.  

In \autoref{sec4}, we apply our
machinery and establish a weak invariance principle for the empirical
ball volumes of shape descriptors of point clouds on various spaces of
invariants (\autoref{thm:2parproccon}).  This is done for the class of locally stationary metric
space-valued stochastic processes. For this, we prove  convergence
of the sequential empirical versions in the Skorokhod space
$D([0,1]\times [0,\mathscr{R}])$ to a Gaussian process, which reduces
to a simple form under stationarity. The latter result, which is of
independent interest, is used to introduce a family of maximum and
quadratic range-based self-normalized test statistics to detect gradual and abrupt topological changes. This provides the applied researcher with a simple tool for inference on the true underlying shape dynamics over time. Returning to the example of cell differentiation, we use these results to test for changes in shape via topological shape descriptors in Section \ref{sec:realdata}.  However, our theory can also be used to test for the evolution of the number of clusters by working with other shape descriptors, such as  hierarchical clustering dendrograms (\autoref{fig:toy-example-dendrograms}-\ref{fig:toy-example-slices}). 
Because of space constraints, certain proofs as well as the 
simulations to demonstrate the finite sample performance of
our tests, are deferred to the Online Supplement.

\section{Framework}\label{sec2}

Let $(M,\partial_M)$ and $(M', \partial_{M'})$ be compact metric spaces and let $\bor(M)$ and $\bor(M')$ be the Borel \si-algebras of $M$ and $M'$ respectively, i.e., the \si-algebras generated by their respective metric topologies. The space of  continuous functions $f\colon (M,\partial_M) \to (M',\partial_{M'})$ with the topology induced by the uniform metric $\rho$ will be denoted by $(C(M,M'), \rho)$. Observe that $(C(M,M'), \rho)$ is again a compact metric space. We consider stochastic processes $({{\mb{X}}}_{t}\colon t\in \znum)$ defined on some common probability space $(\Omega, \F, \pr)$ with values in $C(M,M')$, i.e., the mappings 
\[{\mb{X}}_{t} : \Omega \to C(M,M'),\tageq \label{eq:proc}\] 
are $\F/ \bor(C)$ measurable, where $\bor(C)$ denotes the $\sigma$-algebra generated by $\rho$. Observe that $\omega \in \Omega$
 yields a continuous function $${\mb{X}}_t(\omega): M \to M^\prime, \quad m\mapsto \mb{X}_t(\omega)(m) \in M^\prime$$ and $\mb{X}_{t}(M) \subseteq M^\prime$. As is common in the literature, we slightly abuse notation and drop $\omega$ from the notation, unless confusion can arise.
We can relate this abstract representation of $\mb{X}_t$ to an  equivalent process in terms of sample paths in $M^\prime$. More specifically, we have the evaluation functional $
e_m \colon C(M,M^\prime) \to M^\prime$, $\chi \mapsto \chi(m)$,
which is continuous with respect to the topology induced by $\rho$. Then the process $
(\tilde{\mb{X}}_t(m): t \in \znum, m \in M)$
defined by $$
\tilde{\mb{X}}_t(m) :=e_m \circ \mb{X}_t$$  
takes values in $M^\prime$. In the literature both processes are often denoted by $\mb{X}$. However, given the context, we will try to avoid confusion and simply notice the equivalence $\tilde{\mb{X}}_{t}=\mb{X}_{t}(M)$.

 In practice, we do not observe the random functions fully on their domain of definition. Instead, we observe a so-called \textit{point cloud} for each $t$.  To make this precise, define the multiple evaluation  functional \[e_{m_1,\ldots,m_n}: C(M,M^\prime) \to \prod_{i=1}^n M^\prime,\quad \chi \mapsto  \big\{\chi(m_1), \chi(m_2), \ldots, \chi(m_n)\big\}, \quad m_1, \ldots, m_n \in M. \]
Then a point cloud of size  $n$ of $\mb{X}_t$ can be represented as
\[{\tilde{\mb{X}}}^n_{t}:= e_{m_1,\ldots,m_n} \circ \mb{X}_{t}~,\tageq \label{eq:pointcl}\]
for some $m_1,\ldots,m_n \in M$. We remain agnostic about the way the point clouds arise;  ~\autoref{as:mincov} below encodes a variety of sampling regimes. Intuitively, we require 
$\lim_{n\to \infty}\tilde{\mb{X}}^{n}_{t} \approx \mb{X}_{t}(M) $ in an appropriate sense (see \eqref{eq:cloudapprox}). A  flexible point of view to take is that the ensemble of  point clouds arises as atoms of an appropriately specified family of point processes $(\xi_t: t \in \znum)$ on $M^\prime$, so that the support of $\xi_t$ is a (locally finite) random subset $\mathcal{X}_t \subset {\mb{X}}_t(M) \subset M^\prime$, which then lets us write  $\tilde{\mb{X}}^k_t \equiv \mathcal{X}_t \,|\,\, \xi_t(M^\prime)=k$ for some $m_1, \ldots, m_k$ (see also \autoref{ex:cox}).
Essential in the  analysis is that the sequence of point clouds, whether arising from (noise-corrupted) sampling on the domain of definition or as  atoms of a point process defined on $M^\prime$, 
is required to have a growth intensity that is dependent on the sample size of the ensemble, i.e., $n=n(T)$.
We think of $M$ as a parameter space and the images in $M'$ as representing the geometric object of interest.  For example, $M'$ could be an ambient Euclidean space or a compact Riemannian manifold parametrized by a function whose ``slices" at time $t$ represent the geometric objects of interest.  In these cases, $M$ could coincide with $M'$ or could be the domain of a function parametrizing the points of the underlying geometric objects. 

Our formulation as a metric space-valued process contrasts with the literature on geometric data analysis.
In existing methods, the point clouds arise from sampling according to a pre-specified distribution $\mathcal{P}$ supported on a \textit{fixed} latent  underlying topological space $\mb{M}$. Apart from notable exceptions \citep[][]{oa17,k11}, the sampling regime is assumed iid and $\mathcal{P}$ is taken as the uniform distribution  on $\mb{M}$.  In our setup, a realization of $\mb{X}_{t}$ is itself a metric subspace of the space of bounded functions from $M$ to $M^\prime$, and arises according to the (unknown) law of a  function-valued stochastic process. This formulation not only naturally incorporates (nonstationary) temporal and spatial dependence, 
but also allows us to view the actual atoms of the ensemble of point clouds as originating from a source of randomness, either on the parameter space or the image space.
In addition, it enables a  comprehensive analysis of non-asymptotic bounds and convergence rates.

To capture the shape of these latent processes via the corresponding ensemble of point clouds, we can work with any shape descriptor that is {\em stable}, in a precise sense.

\subsection{Stable shape descriptors} \label{sec:ph}

We now set up an axiomatic characterization of stable shape invariants that fit into our statistical framework.  The key property of a shape descriptor that we need is stability with respect to a suitable metric on the space of compact metric spaces, which we now make precise.  Quantifying how well a point cloud (i.e., a finite metric space) approximates a latent object (e.g., a compact metric space) or another point cloud, requires a meaningful way to express a notion of distance between two different metric spaces, possibly of different cardinality. A metric that allows for this is the {\em Gromov-Hausdorff} distance.  There are various equivalent definitions of the Gromov-Hausdorff distance. We make use of its formulation in terms of correspondences.  Specifically, given two sets $X$ and $Y$, we call a subset  $\mathcal{R} \subset X \times Y$ a correspondence if it satisfies
 that $\forall x \in X$, $\exists y \in Y$ such that $(x,y) \in \mathcal{R}$ and 
 $\forall y \in Y$, $\exists x \in X$ such that $(x,y) \in \mathcal{R}$.  The \textit{distortion} of the correspondence is given by \[
  \text{dist}(\mathcal{R})= \sup_{(x,y),(x^\prime, y^\prime) \in \mathcal{R}} \Big\vert \partial_X(x,x^\prime)-\partial_Y(y,y^\prime)\Big\vert.
  \]
  The Gromov-Hausdorff distance can be defined as 
  \[
  d_{GH}(X,Y)=\frac{1}{2}\inf\Big\{\text{dist}(\mathcal{R})\,\,|\,\,\mathcal{R} \text{ correspondence between $X$ and $Y$} \Big\}~.\tageq \label{eq:GHdist}
  \]
  We remark that the Gromov-Hausdorff distance between two compact metric spaces is 0 if and only if they are isometric.  In other words, this is really a metric on the set of isometry classes of compact metric spaces (and a pseudo-metric on the set of all compact metric spaces).

In what follows, we will write $\mathcal{M}$ for the set of compact metric spaces.
 
\begin{definition}\label{def:stabsh}
A {\em stable shape descriptor} with values in a Polish space $(\mathscr{B}, d_{\mathscr{B}})$ is a function $\sh \colon \mathcal{M} \to \mathscr{B}$ that is stable in the sense that there exists a constant $L > 0$ such that for any $X,Y \in \mathcal{M}$,
\[
d_{\mathscr{B}}(\sh(X), \sh(Y)) \leq L d_{GH}(X,Y). \tageq \label{eq:stable}
\]
\end{definition}
That is, a stable shape descriptor is a Lipschitz function from the set of  compact metric spaces to a Polish space.   As we shall see, this condition suffices for our framework.  Notable examples of stable shape descriptors include dendograms~\cite{cm10}, many of the invariants of TDA such as persistent homology and zigzag persistence, and metric geometry invariants such as the distance distribution.   In Section \ref{sec:realdata}, we will explore an application in the context of {\em persistent homology}, the main shape descriptor from TDA.

\begin{example}[Dendograms]
Consider the function which assigns to a point cloud $X$ (i.e., a finite metric space) the associated single-linkage hierarchical clustering dendrogram. For fixed $\epsilon > 0$, single-linkage clustering works by assigning two points $x,z \in X$ to be in the same cluster if they are connected by a path $x = x_0, x_1, x_2, \ldots, x_k = z$ where $\partial_X(x_i, x_j) < \epsilon$. A dendrogram encodes clustering information across feature scales which are represented by the $y$-coordinates; for $y = \epsilon$, the horizontal slice of the dendrogram has separate components for each single-linkage cluster at scale $\epsilon$.  To see what the entire dendrogram encodes, observe that in figure~\ref{fig:toy-example-dendrograms} the nearby points yield many tiny clusters (for small $\epsilon$) that merge into a single cluster, whereas the separated points have a wide range of feature scales for which there are two distinct clusters before they finally merge when $\epsilon$ is large enough.  See figure~\ref{fig:toy-example-slices} for an indication of the clusters that correspond to a particular $y$-coordinate in the dendrogram.
 There is a metric on dendrograms (using the Gromov-Hausdorff distance) such that assignment of a dendrogram is a stable shape descriptor with respect to this metric~\cite{cm10}.
\end{example}

\begin{figure}[t]
\centering
\includegraphics[scale=0.4]{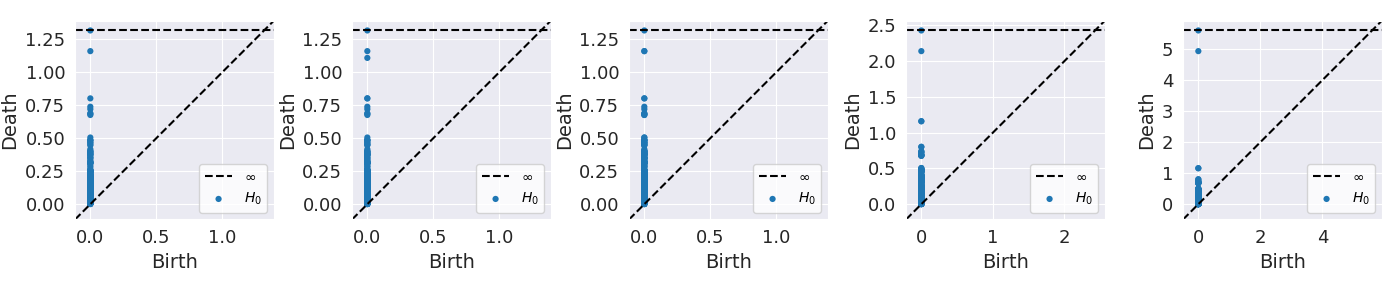}
\caption{Each panel corresponds to $\PH_0$ of the example in \autoref{fig:toy-example}. The $x$-axis represents the scale at which the cluster appears and the $y$-axis the scale at which it disappears.  The two points appearing far away from the  diagonal represent the emergence of two distinct cell types.}
\label{fig:toy-example-PH0}
\end{figure}
\begin{example}[Persistent homology] 
Persistent homology gives a rich family of qualitative stable shape descriptors that capture topological features of a point cloud progressively across different feature scales.  We give a brief review of TDA and specifically persistent homology in   \autoref{sec:5}. In short, the $k$th persistent homology, $\PH_k$, captures how $k$-dimensional holes (i.e., connected components, tunnels, voids, etc.) appear and disappear as the feature scale changes. 
 As illustrated in figure~\ref{fig:toy-example-PH0}, $\PH_0$ captures the number of connected components and how they persist across feature scales.  Each point of the persistence diagram represents a cluster from the scale at which it appears to the scale at which it merges with a neighboring cluster.   
But in higher dimensions, persistent homology captures much more interesting features; we refer to Section \ref{sec:revTDA} and particularly the illustration of constructing $PH_1$ in figures~\ref{fig1} and~\ref{fig2}. 
\end{example}

\subsection{Locally stationary metric space-valued processes}

To provide statistical theory which allows for changes in the probabilistic structure, we consider an ``infill'' asymptotic framework in which more observations become available at a local level as the observation  length $T$ increases.  Nonstationary processes that can be analyzed in such a framework are known as \textit{locally stationary time series}. The theory was introduced in \citep{dahlhaus1997} for the linear case, and refined in \cite{SR06} to nonlinear time series. This paved the way for the development of various inference methods including estimation, prediction, and detecting deviations from stationarity (e.g., see~\cite{kp15,jensub2015,dzhou20,pd2021,ZW21,cper21} and references therein). Theory and corresponding inference techniques for Banach space-valued processes %with time-varying characteristics 
were later put forward in \cite{vde-loc,vDD22}, known as \textit{locally stationary functional time series}. To develop analogous theory and inference techniques for our case, we require a notion of \textit{locally stationary metric space-valued processes}, extending the previous to random elements with values in a function space that inherits the metric structure.

To make this precise, we think of having a triangular array of processes $(\mb{X}_{t,T}: t= 1,\ldots, T: T \in \mathbb{N})$ indexed by $T \in \mb{N}$, where each $\mb{X}_{t,T}$ is a random element with values in a complete, separable metric space $(S, \partial_s)$. We remark that the double-indexed process can be extended on $\znum$ by setting $\mb{X}_{t,T}=\mb{X}_{1,T}$ if $t<0$ and $\mb{X}_{t,T}=\mb{X}_{T,T}$ if $t\ge T$. Define the $\mathcal{L}^p$-distance, $1 \le p <\infty$, by 
$$\partial_{S,p}(\mb{X},\mb{Y}) = \Big(\E \partial^{p}_S(\mb{X}, \mb{Y})\Big)^{1/p} = \Big(\int_{\omega \in \Omega} \partial^p_S(\mb{X}(\omega),\mb{Y}(\omega)) d\pr(\omega) \Big)^{1/p}, \quad \mb{X},\mb{Y}\colon \Omega \to S,$$ 
and the $\mathcal{L}^\infty$-distance by $\partial_{S,\infty}(\mb{X},\mb{Y}) = \inf_{\pr(A) = 0} \sup_{\omega \in \Omega\setminus A} \partial_S(\mb{X}(\omega),\mb{Y}(\omega))$. We say that $\mb{X}$ and $\mb{Y}$ are equivalent if $\mb{X}=\mb{Y}$ $\pr$-almost surely. The corresponding $\mathcal{L}^p$-space is  denoted by
\[\mathcal{L}^p_S=\Big\{\text{ equiv. classes of $\F$-measurable $\mb{X}\colon \Omega \to S$ with $\partial_{S,p}(s,\mb{X})<\infty$, $s \in S$ }\Big\}~.\]

\begin{definition}\label{def:Localstationarity}
Let $(\mb{X}_{t,T}: t \in \znum, T \in \nnum)$ be an $(S,\partial_S)$-valued stochastic process. 
\begin{enumerate}[label=(\roman*)]
\item
$(\mb{X}_{t,T}: t \in \znum, T \in \nnum)$ is \textit{locally stationary} if for all  $u=t/T \in [0,1]$, there exists an $(S,\partial_S)$-valued stationary process $(\mb{X}_t(u): t \in \znum, u \in [0,1])$ such that
\[
 \partial_S\big(\mb{X}_{t,T}, \mb{X}_t(\frac{t}{T}) \big)=O_p(T^{-1}) \quad \text{and} \quad \partial_S(\mb{X}_{t}(u), \mb{X}_t(v))=O_p(|u-v|) \tageq \label{eq:locstatp}
\] 
uniformly in $t=1, \ldots, T$ and $u,v \in [0,1]$.\\
\item
If $(\mb{X}_{t,T}: t \in \znum, T \in \nnum) \in \mathcal{L}^p_S(\F)$, then we call it \textit{locally stationary in $\mathcal{L}^p_S(\F)$} if for all  $u=t/T \in [0,1]$, there exists an $\mathcal{L}^p_S(\F)$-valued stationary process $(\mb{X}_t(u): t \in \znum, u \in [0,1])$ such that for some constant $K >0$,
\[
\partial_{S,p}\big(\mb{X}_{t,T}, \mb{X}_t(\frac{t}{T})\big) \le K T^{-1} \quad \text{and} \quad  \partial_{S,p}\big(\mb{X}_{t}(u), \mb{X}_t(v)\big) \le K|u-v|  \tageq \label{eq:locstat}
\] 
uniformly in $t=1, \ldots, T$ and $u,v \in [0,1]$.
\end{enumerate}
\end{definition}

We emphasize that if the process is in fact stationary then $\mb{X}_{t,T}=\mb{X}_t$ for all $t=1,\ldots, T$ and $T \in \mathbb{N}$. In this case, infill asymptotics simply coincides with classical asymptotics. Thus, processes that fit within the infill asymptotic framework encompass the class of stationary processes; this is a useful setting to detect deviations from stationarity. 

\subsection{Inference on the latent process via point clouds} \label{pointcloudpart}
We now turn to inference on the geometric features of the processes of the form $(\mb{X}_{t,T})_{t\in [T]}$ 
by means of the corresponding available ensemble of point clouds $(\tilde{\mb{X}}^n_{t,T})_{t\in [T]}$. In order to do so, we study the latter in the target space for the stable  shape descriptor $\sh$. For geometric inference, we thus have a  realization of the $(\mathscr{B},d_{\mathscr{B}})$-valued process $\big(\sh(\tilde{\mb{X}}^n_{t,T})\big)_{t\in [T]}$ at our disposal. 
In this subsection, we make precise what distributional properties of the latent process are preserved by $\big(\sh(\tilde{\mb{X}}^n_{t,T})\big)_{t\in [T]}$ in the space of stable shape descriptors  $(\mathscr{B},d_{\mathscr{B}})$. We start with the following lemma.  

\begin{lemma}\label{lem:dghds}
For all $X, Y \in S=(C( M,M^\prime),\rho) $ and 
 $\tilde{X}^n = e_{m_1,\ldots,m_n} \circ {X}, \tilde{Y}^n =  e_{m_1,\ldots,m_n} \circ {Y}$, \[\sup_n d_{GH}(\tilde{X}^n,\tilde{Y}^n) \le \partial_S(X,Y).\]
\end{lemma}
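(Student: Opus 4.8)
The plan is to produce, for each fixed $n$, an explicit correspondence between the point clouds $\tilde X^n$ and $\tilde Y^n$ whose distortion is controlled by $\rho(X,Y)$, and then invoke the definition of $d_{GH}$ in terms of correspondences. Recall that both point clouds are built from the *same* sampling points $m_1,\ldots,m_n \in M$: we have $\tilde X^n = \{X(m_1),\ldots,X(m_n)\}$ and $\tilde Y^n = \{Y(m_1),\ldots,Y(m_n)\}$ as subsets of $M'$, with metric $\partial_{M'}$ restricted to them. The obvious candidate correspondence is the ``diagonal'' one induced by the common index set, namely $\mathcal{R} = \{(X(m_i),\,Y(m_i)) : 1 \le i \le n\}$. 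This is visibly a correspondence: every point of $\tilde X^n$ is of the form $X(m_i)$ and is paired with $Y(m_i) \in \tilde Y^n$, and symmetrically.

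Next I would bound the distortion of $\mathcal{R}$. For any two pairs $(X(m_i),Y(m_i))$ and $(X(m_j),Y(m_j))$ in $\mathcal{R}$, the triangle inequality in $M'$ gives
\[
\big|\partial_{M'}(X(m_i),X(m_j)) - \partial_{M'}(Y(m_i),Y(m_j))\big| \le \partial_{M'}(X(m_i),Y(m_i)) + \partial_{M'}(X(m_j),Y(m_j)).
\]
Each term on the right is at most $\sup_{m \in M}\partial_{M'}(X(m),Y(m)) = \rho(X,Y)$, since $\rho$ is the uniform metric on $C(M,M')$. Hence $\operatorname{dist}(\mathcal{R}) \le 2\,\rho(X,Y)$, and therefore $d_{GH}(\tilde X^n,\tilde Y^n) \le \tfrac12 \operatorname{dist}(\mathcal{R}) \le \rho(X,Y) = \partial_S(X,Y)$. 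Since this bound is uniform in $n$ (it does not depend on $n$ or on the choice of sampling points), taking the supremum over $n$ preserves it, giving $\sup_n d_{GH}(\tilde X^n,\tilde Y^n) \le \partial_S(X,Y)$.

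There is no serious obstacle here; the argument is essentially just the standard ``same index set gives a correspondence'' trick combined with the triangle inequality, and the only thing to be careful about is the bookkeeping when the evaluations $X(m_i)$ are not all distinct (so $\tilde X^n$ has fewer than $n$ points as a set). This is harmless: one may either regard $\tilde X^n$ as a multiset, or note that any correspondence between the underlying sets is still obtained by collapsing $\mathcal{R}$, and collapsing can only decrease the distortion. The mild subtlety worth a sentence in the writeup is that $d_{GH}$ was defined above for compact metric spaces, and a finite metric space is trivially compact, so the stability input and the correspondence formulation both apply verbatim.
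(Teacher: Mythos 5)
Your proof is correct and follows essentially the same route as the paper's: the diagonal correspondence induced by the common sampling points, the triangle inequality to bound its distortion by $2\rho(X,Y)$, and the correspondence formulation of $d_{GH}$. The only cosmetic difference is that the paper first builds the correspondence over all of $M$ and then restricts to the sampled points, while you work directly with the finite correspondence; your remark about repeated evaluation points is a harmless extra precaution.
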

\begin{proof}[Proof of \autoref{lem:dghds}]
Recall the Gromov-Hausdorff distance \eqref{eq:GHdist} and 
 consider the correspondence $\mathcal{R}=\big\{(x,y)=\big(X(m), Y(m)\big):m \in M\big\}$. The triangle inequality yields
   \begin{align*}
 \text{dist}(\mathcal{R})&
 =\quad \sup_{(X(m),Y(m)),(X(\tilde{m}), Y(\tilde{m})) \in \mathcal{R}} \Big\vert \partial_{M^\prime}(X(m),X(\tilde{m}))-\partial_{M^\prime}(Y(m),Y(\tilde{m}))\Big\vert
%  \\& \le \quad \sup_{(X(m),Y(m)),(X(\tilde{m}), Y({m}^\prime)) \in \mathcal{R}}\Big\vert \partial_{M^\prime}(X(m),Y(m))+\partial_{M^\prime}(X(\tilde{m}),Y(\tilde{m}))\Big\vert
 \\& \le \quad 2\sup_{m \in M} \partial_{M^\prime}\big(X(m),Y(m)\big) = 2\partial_S(X,Y).
  \end{align*}
  Since $\sup_{n}\mathrm{dist}(\mathcal{R}_n) \le \mathrm{dist}(\mathcal{R})$ where $\mathcal{R}_n=\big\{(x,y)=\big(X(m_i), Y(m_i)\big):m_i \in M,i \in [n]\big\}$, the result follows.
\end{proof}

\begin{Proposition}\label{prop:borm}
For a compact metric space $(S,\partial_S)$,     $\sh(\cdot)\colon (S,\partial_S) \to (\mathscr{B},\partial_{\mathscr{B}})$ is a Borel measurable transformation.
\end{Proposition}
\begin{proof}
By \autoref{eq:stable} and \autoref{lem:dghds}, 
$d_{\mathscr{B}}(\sh(X), \sh(Y)) \leq Ld_{GH}(X, Y)\le L d_{S}(X, Y)$ for all $X,Y \in S$. 
Thus, $\sh(\cdot)\colon (S,\partial_S) \to (\mathscr{B},\partial_{\mathscr{B}})$ is Lipschitz-continuous with Lipschitz constant $L$. Hence,  the preimage of every open set is open. % and thus this function is a Borel measurable transformation.  
\end{proof}
Using continuity of the evaluation functionals and  \autoref{prop:borm}, we obtain: 
\begin{Corollary}\label{prop0}
Let $(S,\partial_S)=(C(M,M^\prime),\rho)$. If $(\mb{X}_{t}: t \in \znum)$ is an $(S,\partial_S)$-valued stationary and ergodic process then  $(\sh(\tilde{\mb{X}}^n_t): t\in \znum)$ is a stationary and ergodic $({\mathscr{B}},d_\mathscr{B})$-valued process.
\end{Corollary}

A second implication of \autoref{prop:borm}, which we will make use of is the following.
\begin{Corollary}
\label{prop1} 
Let $\G_t$ be a sub-sigma algebra of $\F$ and let $(\G_t)$ be a filtration. Suppose 
$(\mb{X}_{t})$ is adapted to $(\G_t)$. Then $\sh(\tilde{\mb{X}}^n_{t})$, is adapted to  $\G_{t}$ for all $t$ and $n$.
\end{Corollary}

\autoref{prop1} ensures for example that if the elements of $(\mb{X}_t)$ are measurable functions of a strongly mixing sequence $(\zeta_t: t\in \znum)$, the tail $\sigma$-algebra $\G_{-\infty}= \cap_{t \ge 0} \G_{-t}$, where $\G_t=\sigma(\zeta_s, s\le t)$ is the natural filtration of $(\zeta_t: t\in \znum)$, is again trivial. In case $(\zeta_t\colon t\in \znum)$ is a sequence of independent elements this follows immediately from Kolmogorov's 0-1 law, whereas the strongly mixing case requires some more effort (we refer to~\citep{brad05} for details). We will make use of \autoref{prop0} and \autoref{prop1} in \autoref{sec4}. 
Finally, the following result yields that local stationarity is preserved. 

\begin{Proposition}\label{prop2}
Suppose $(\mb{X}_{t,T})$ is locally stationary in the sense of  \autoref{def:Localstationarity} with  $(S,\partial_S)=(C(M,M^\prime),\rho)$. Then $\big(\sh(\tilde{\mb{X}}^n_{t,T})\big)_{t=1}^T$ is a $(\mathscr{B},d_\mathscr{B})$-valued process that satisfies \autoref{def:Localstationarity}. If $(\mb{X}_{t,T})$ is locally stationary in $\mathcal{L}^p_S(\F)$, then it is locally stationary in $\mathcal{L}^p_\mathscr{B}(\F)$.
\end{Proposition}

\begin{proof}
By assumption, there exists an auxiliary process $(\mb{X}_{t}(u)\colon t\in \znum, u\in[0,1])$ that satisfies \eqref{eq:locstat} in relation to $(\mb{X}_{t,T}: t\in \znum,T \in \nnum)$.  By \autoref{prop0}, $(\sh(\tilde{\mb{X}}^n_{t}(u)): t\in \znum, u\in [0,1])$ is then strictly stationary for all $u \in [0,1]$. The stability property of $\sh$ and \autoref{lem:dghds} yield 
\begin{align*}
d_{\mathscr{B}}\Big(\sh(\tilde{\mb{X}}^n_{t,T}), \sh\big(\tilde{\mb{X}}^n_{t}\big(t/T\big)\big)\Big)
\le L\partial_S\big(\mb{X}_{t,T},\mb{X}_{t}\big(t/T\big)\big).
\end{align*}
Similarly, $
d_{\mathscr{B}}\big(\sh(\tilde{\mb{X}}^n_{t}(u)), \sh(\tilde{\mb{X}}^n_{t}(v))\big) \le L\partial_S\big(\mb{X}_{t}(u),\mb{X}_{t}(v)\big)$.
The first part now  follows from \eqref{eq:locstatp}. The second part follows analogously from \eqref{eq:locstat} and from the fact that $
d_{\mathscr{B}}\big(\sh(\tilde{\mb{X}}^n_{t,T}), \sh(\tilde{s}^n)\big) \le   \partial_S(\mb{X}_{t,T},s)$ for all $s \in S$.
\end{proof}

\subsection{Sampling regime}
Next, we explore a mild assumption on the sampling regime to ensure that inference methods based on point cloud functionals consistently capture the topological features of the latent process. For this, we formalize a setting that enables the analysis of convergence rates by relating the sampling regime of the point clouds to the sample size of the ensemble, i.e., $n:=n(T)$ such that  $n(T) \to \infty$ as $T \to \infty$. Furthermore, we exploit the basic structure of compact metric spaces to keep the assumptions minimal and widely applicable. More specifically, since the latent random object $\mb{X}_{t,T}(M)$ is the continuous image of a compact metric space, there exists a collection of points  $\mb{C}^n_{t,T}=\{C_{t,i,T}\}_{i \in [n(T)]} \subset M^\prime$ and a radius $r^n_{t,T}:=r(\mb{X}_{t,T}(M),n(T))$ such that 
$\bigcup_{i\in [n(T)]} B({C}_{t,i,T}, r^n_{t,T})$
provides a minimal $r^n_{t,T}$-radius cover of $\mb{X}_{t,T}(M)$. That is, $n(T)$ is the smallest number of balls with centers $\mb{C}^n_{t,T}$ and with radius $r^n_{t,T}$ such that the union covers $\mb{X}_{t,T}(M)$.  Observe that $\mb{C}^n_{t,T}$ is a $r^n_{t,T}$-net for $\mb{X}_{t,T}(M)$ and thus $d_{GH}(\mb{C}^n_{t,T}, {\mb{X}}_{t,T}(M)) \leq r^n_{t,T}$. 
We impose conditions which ensures that the point cloud is `close enough' to the cover at a controlled rate.

\begin{assumption}\label{as:mincov}
Let $\mb{C}^n_{t,T}$ be the set of centers of an $r^n_{t,T}$-minimal radius cover of\, $\mb{X}_{t,T}(M)$. Then we assume that there exists a function $\alpha_{n,T}$ that satisfies \[
 d_{GH}(\mb{C}^n_{t,T}, \tilde{\mb{X}}^n_{t,T})=O_p(\alpha_{n,T}), \tageq \label{eq:rate}\]
 for all $t \in [T], T \in \mb{N}$, 
where $\alpha_{n,T} \to 0$ as $n(T) \to \infty$. 
\end{assumption}
Observe that if the process is stationary then the subscript $T$ can be dropped from the notation. If $\tilde{\mb{X}}^n_{t,T}$ is not an $r^n_{t,T}$-minimal radius cover then it forms a cover with some radius $\alpha^n_{t,T}>r^n_{t,T}$. Hence, take $\alpha_{n,T} = \sup_t \alpha^n_{t,T}$. It follows from the triangle inequality that \begin{align*}
d_{GH}(\tilde{\mb{X}}^n_{t,T}, \mb{X}_{t,T}(M) \le d_{GH}( \mb{X}_{t,T}(M),\mb{C}^n_{t,T})+ d_{GH}(\mb{C}^n_{t,T}, \tilde{\mb{X}}^n_{t,T}) =O_p(\alpha_{n,T}),
  \tageq \label{eq:cloudapprox}
   \end{align*}
   since 
   $\pr\big(d_{GH}(\mb{C}^n_{t,T}, \mb{X}_{t,T}(M))> \alpha_{n,T} \big) \le\pr\big(d_{GH}(\mb{C}^n_{t,T}, \mb{X}_{t,T}(M))> r^n_{t,T} \big) = 0$
  as $\mb{C}^{n}_t$ is a $r^n_{t,T}$-minimal radius cover by assumption. 
In addition, the stability property therefore yields
\begin{align*}
&|d_\mathscr{B}(\sh(\tilde{\mb{X}}^n_{s,T}), \sh(\tilde{\mb{X}}^n_{t,T}))-d_\mathscr{B}(\sh(\mb{X}_{s,T}(M)), \sh(\mb{X}_{t,T}(M)))|  
=O_p(\alpha_{n,T}).
\end{align*}
\autoref{as:mincov} tells us that the point cloud will get closer to a minimal radius cover as $T \to \infty$ in probability. 
Clearly, the rate at which the point cloud converges to forming a minimal radius cover will be application-dependent. However, the above assumption is mild and encodes a variety of sampling regimes; all of the available conditions used in the geometric data analysis literature (see e.g., \cite{nsw08, bjpr22,cglm15,oa17})
imply \autoref{as:mincov}. 

\begin{example}\label{ex:cox}
To illustrate~\autoref{as:mincov} in a setting that allows for possible heterogeneous interaction between spatial locations,  assume that the point clouds arise as the atoms of a family of Cox processes. Cox processes can be seen to generate a wide class of interesting spatial point patterns and encompass the class of inhomogeneous and mixed Poisson processes.  
To make this precise, 
 let $\eta_T$ denote a random measure on $M^\prime$. For $\omega \in \Omega$, define the function 
\[
\Lambda_{t,T}(\omega, A) =\eta_T(\omega, A \cap \mb{X}_{t,T}(M)(\omega))
  \quad A \in \mc{B}(M^\prime),
\]
and let the point process $\xi_{t,T}$ be a Cox process directed by $\Lambda_{t,T}$, i.e., $\xi_{t,T}$ is conditionally Poisson given $\Lambda_{t,T}$.
Then, to verify \autoref{as:mincov} note that
 \begin{align*}
\pr\big(d_{GH}(\mb{C}^n_{t,T}, \tilde{\mb{X}}^n_{t,T}) > \alpha_{n,T}\big)
& \le \E\Big[\pr\big( B({C}_{t,j,T},\alpha_{n,T} )\cap \mc{X}_{t,T}=\emptyset \text{ for some $j \in [n]$} \bv \xi_{t,T}(M^\prime)=n, \Lambda_{t,T} \big)\Big]
        \\ &\le \sum_j \E\Big[\big( 1- {\Lambda_{t,T}(B({C}_{t,j,T},\alpha_{n,T} ))}/{\Lambda_{t,T}(M^\prime)}\big)^n\Big]
\\& \le  \sum_j \E[e^{-n\cdot \breve{\eta}_{t,T}(B({C}_{t,j,T},\alpha_{n,T} ))}]~,
  \end{align*}
where $\breve{\eta}_{t,T}(\omega, B({C}_{t,j,T},\alpha_{n,T} )) = \frac{\Lambda_{t,T}(\omega, B({C}_{t,j,T},\alpha_{n,T} ))}{\Lambda_{t,T}(\omega, M^\prime)} $. 
If, for almost all $\omega \in \Omega$,  $\min_j \breve{\eta}_{t,T}(\omega, B({C}_{t,j,T},\alpha_{n,T} )) \ge$ \\$\frac{1}{n}\big(\log(n)+c\log(1/\delta)\big)$ for some $\delta, c>0$, then 
\[ \pr\big(d_{GH}(\mb{C}^n_{t,T}, \tilde{\mb{X}}^{n}_{t,T}) > \alpha_{n,T}\big) \le \delta^c, \tageq \label{eq:gromovhd}\]
 which gives \eqref{eq:rate}.
Without changing the requirement on the lower bound, the configuration $c=1$, $\delta_n=1/n$ yields $d_{GH} (\mb{C}^n_{t,T}, \tilde{\mb{X}}^n_{t,T})=o_p(\alpha_{n,T})$. 
Assumptions imposed in the geometric data analysis literature on the probability measure of interest (in the above example this corresponds to $\breve{\eta}_{t,T}$) essentially provide a lower bound on the volume of a ball that has nonempty intersection with the object as a polynomial or affine function of the radius. Notably, this will be the case if the measure is assumed uniform, has a positive density, or more generally satisfies the $(a,b)$-standard assumption \citep{nsw08,flrwbs14,cglm15}. 

As a simple case in point, suppose  $\Lambda_{t,T}(\omega,A) = \int_{A \cap \mb{X}_{t,T}(M)(\omega)} \lambda_T(x) dx$, which corresponds to an inhomogeneous Poisson process. Then the requirement on the lower bound for balls with nonempty intersection reduces to 
   $ B(x,\alpha_{n,T}) \ge \frac{1}{\iota n}\big(\log(n)+c\log(1/\delta)\big)$ where $\iota := \inf_{x,T} \lambda_T(x)/ \int_{\mb{X}_{t,T}(\omega)(M)}\lambda_T(x)dx >0$. For simplicity, if $\mb{X}_{t,T}(\omega)(M)$ is the unit disk for all $t,T$, then we find \autoref{as:mincov} is satisfied with $\alpha_{n} =C\sqrt{\log(n)/n}$,  $C^2 \simeq 2/(\iota \pi)$.
 \end{example}

\begin{Remark} \label{rem:coverstuff} 
Control on the covering number can make the lower bound on $\breve{\eta}_{t,T}(\cdot,A)$, where $A \cap X_{t,T}(M) \neq\emptyset$ almost surely, more refined. To see how, note that  there exists a uniform $\alpha_{n,T}$-minimal radius cover with covering number $k_{n,T}$. Let  $\mb{U}^k_T$ denote the corresponding set of centers. Then
\begin{align*}
     \pr\big(d_{GH}(\mb{C}^n_{t,T}, \tilde{\mb{X}}^n_{t,T}) > 4\alpha_{n,T}\big) \le    \pr\big(d_{GH}(\mb{U}^k_T, \tilde{\mb{X}}^n_{t,T}) > \alpha_{n,T}\big)+
     \pr\big(d_{GH}(\mb{U}^k_T, {\mb{C}}^n_{t,T}) > 3\alpha_{n,T}\big),  \tageq\label{eq:boundfink}
\end{align*}
where the triangle inequality shows the second term drops out since $\mb{C}^n_{t,T}$ and $\mb{U}^k_T$ are, respectively, a $r^n_{t,T}$-minimal radius cover of $X_{t,T}(M)$ and a uniform $\alpha_{n,T}$-minimal radius cover of $(\mb{X}_{t,T}(M))_{t \in [T]}$. 
An argument as above shows that each point cloud $\tilde{\mb{X}}_{t,T}^n$ forms a $\alpha_{n,T}$-net with probability at least $1-\delta^c$ if, almost surely,  
$\min_j\breve{\eta}_{t,T}(\cdot,B(C_{t,j,T},\alpha_{n,T}))$ $\ge$ $\frac{1}{n}\big(\log(k_{n,T})+c\log(\delta^{-1})\big)$. Hence, a lower bound on the latter be made precise by finding a lower bound on the covering number $k_{n,T}$.  Geometric assumptions made in the literature give control on the covering number, expressed in terms of invariants of the underlying metric space. For example, in the case of a doubling metric space this would be the doubling constant, and in the case of a compact Riemannian manifold this includes the injectivity radius and the sectional curvature. We refer for details to \citep{bjpr22} and \citep{nsw08}, respectively. 
 \end{Remark}

\begin{Remark}[{noise corrupted sampling}]
\rm{\autoref{as:mincov} facilitates handling noisy sampling. Formalizing this can be done by considering noise-corrupted versions of \eqref{eq:bershift}, i.e., $
\ddot{\mb{X}}_{t,T}=g\big(t,T,\mathfrak{f}_t, \varepsilon_t \big)$ which satisfy $d_{GH}(\ddot{\tilde{\mb{X}}}^n_{t,T}, {\tilde{\mb{X}}}^n_{t,T}) =O_p(\alpha_{n,T})$, and where $(\varepsilon_t)$ is an iid sequence  independent of $(\mathfrak{f}_t)$. We will study convergence rates and bounds under such assumptions in more detail in future work.}
\end{Remark}
\section{Characterizing $(S,\partial_S)$-valued processes}\label{sec3} 

\subsection{A reconstruction theorem for metric space-valued stochastic processes}

In this section, we provide an appropriate complete invariant in the spirit of Gromov \cite{Gromov}. Gromov's ``mm-reconstruction theorem" yields that a metric measure space (mms) $(S,\partial_S,\nu_S)$ is up to a measure-preserving isomorphism characterized by the pushforward of the product measure $\nu^{\otimes \nnum}$ along $\phi_+\colon S^\nnum\to \mb{M}^{\text{met}}$, where 
\[
\phi_+(s_1,s_2,\ldots)= \big(\partial_S(s_i,s_j)\big)_{(i,j) \in \mb{N}\times \mb{N}} \tageq\label{mmet}
\]
and where $\mb{M}^{\text{met}}$ denotes the space of positive, symmetric matrices.  
In other words, a representative mms $(S,\partial_S,\nu_S)$ of the equivalence class is uniquely determined by the infinite-dimensional random matrix of distances $
\big\{\partial_S(\zeta_i,\zeta_j)\big\}_{(i,j) \in \mb{N}\times \mb{N}}$,
where $(\zeta_i)$ is an iid sample with common distribution $\nu$.  This equivalence enables inference on mms via the corresponding random distance matrix distributions on the convex cone of positive, symmetric matrices, which is considerably more convenient. However, this result and its proof are inadequate to say something similar regarding a metric space-valued stochastic process; it heavily relies upon pushing forward the product measure $\nu^{\otimes \mb{N}}$ and the notion of a metric measure space fails to meaningfully describe the inherent temporal dynamics.

The question arises whether an $S$-valued process $(X_t: t\in \znum)$ with law $\mu_X$ can be characterized via the pushforward of $\mu_X$ along $\phi$, where $\phi:S^\znum \to \mb{M}^{\text{met}}$ is the extension of \eqref{mmet} to include negative indices. That is, if the infinite-dimensional random matrix of distances
\[
\phi(X_1,X_2,\ldots)= \big(\partial_S(X_i,X_j)\big)_{(i,j) \in \mb{Z}\times \mb{Z}} \tageq\label{mmet2}
\]
characterizes $\mu_X$ up to an appropriate notion of isometry. In the following, we answer this question affirmatively and prove that $\phi_\star \mu_X$ uniquely characterizes an equivalence class of processes that can be viewed as  \textit{ergodic metric measure-preserving dynamical systems}. In particular, this result implies that the infinite-dimensional distance matrix obtained by a single realization of a stationary and ergodic metric space-valued processes characterizes the law up to a measure-preserving isometry. 

We start with introducing a definition of a \textit{metric measure-preserving  dynamical system}. 

 \begin{definition}[mmpds]\label{mmds}
 Let $(S,\partial_S)$ be a complete, separable metric space and let $(S,\mathcal{B}(S), \mu_S)$ be a corresponding probability space, where the Borel $\sigma$-algebra is generated by the metric $\partial_S$. Further, let $\theta_S: S\to S$ be a Borel measurable function. We refer to $(S,\partial_S,\mu_S, \theta_S)$ as a \textit{metric measure dynamical system}. If $\theta_S: S \to S$ is a measure-preserving transformation, we call it a \textit{metric measure-preserving dynamical system (mmpds)}.
 \end{definition}
 
\begin{example}
[Stationary and ergodic processes]
Consider the Cartesian product $S^\znum =\prod_{t\in \znum}S$. Then an $(S,\partial_S)$-valued stationary and ergodic stochastic process $ X\colon(\Omega,\F,\pr)\to (S^\znum,\mathcal{B}(S^\znum), \mu_X)$ may be viewed as a metric measure-preserving dynamical system of which the measure-preserving transformation is ergodic under the measure $\mu_X$. To see this, note that separability of $S$ implies that the  cylinder $\sigma$-algebra $\otimes_{t \in \znum}\mathcal{B}(S)$ and the Borel $\sigma$-algebra $\mathcal{B}(S^\znum)$ on the Cartesian product $S^\znum$ (endowed with its product topology) coincide \citep[see e.g.,][]{kallenberg}. Hence, let  $\rho_S$ denote a metric that metrizes the product topology on $S^\znum$ (see e.g., \eqref{eq:supnorm}). Furthermore, consider the left shift map $\theta_S: S^\znum \to S^\znum$, i.e.,
\[
\theta_S(\ldots, x_{-1}, x_0, x_1, \ldots) = (\ldots, x_{0}, x_1, x_2, \ldots)
\]
 and recall that we call $X$ stationary if  $\theta_S X\overset{d}{=} X$, that is,  $\theta_S$ is a measure-preserving transformation. Furthermore, let  $\mathscr{I}$ denote the $\sigma$-algebra generated by the invariant sets, i.e., those sets which satisfy $\theta^{-1}_S I = I$. A measure-preserving transformation is ergodic under $\mu_X$  if the $\sigma$-algebra $ \mathscr{I}$ is  $\mu_X$-trivial, that is,
 \[
 \mu_X(I) = 0 \text{ or 1} \quad \text{for any $I \in \mathscr{I}$.}
 \]
 Thus, $(S^\znum,\rho_{S}, \mu_X, \theta_S)$ is  a mmpds with $\theta_S$ ergodic under $\mu_X$.
 \end{example}
  With \autoref{mmds} in place, we  define a measure-preserving isometry between two metric measure-preserving dynamical systems as follows.
 
\begin{definition} \label{def1}
Two mmpds $(S,\partial_S, \mu_S, \theta_S)$ and $(R,\partial_R, \mu_R, \theta_R)$ are {\em measure-preserving isometric} if
there are sets $S^\prime \in\mathcal{B}(S)$, $R^\prime \in \mathcal{B}(R)$ with $\mu_S(S^\prime)=1$,  $\theta_S(S^\prime) \subseteq S^\prime$ and $\mu_R(R^\prime)=1$, $\theta_R(R^\prime) \subseteq R^\prime$, respectively, such that there exists an invertible measure-preserving map $\mathcal{T}: S^\prime \to R^\prime$ satisfying
\[
\mathrm{(i)}\,\,\, \mathcal{T}(\theta_S(s)) = \theta_R(\mathcal{T}(s)), \, s \in S^\prime; \qquad
\mathrm{(ii)}\,\,\, \partial_R(\mathcal{T}(s),\mathcal{T}(s^\prime))=\partial_S(s,s^\prime), \quad \forall s,s^\prime \in S^\prime.
\]
 \end{definition}

 \begin{definition} Two metric measure-preserving dynamical systems belong to the same equivalence class $\mathfrak{X}$  if they are measure-preserving isometric. 
 \end{definition}
 
 The following statement may be viewed as  a generalization of Gromov's reconstruction theorem to ergodic metric measure-preserving dynamical systems. 
 
\begin{thm}[Reconstruction theorem for ergodic mmpds]\label{ergodiccase}
Let $(S,\partial_S)$ be a complete, separable metric space and let $X\colon (\Omega,\F,\pr)\to (S^\znum,\mathcal{B}(S^\znum), \mu_X)$  be an $S$-valued stationary ergodic stochastic process. Then the distributional properties of  $X$ are, up to measure-preserving isometry in the sense of \autoref{def1}, completely captured by the pushforward measure $\iota_X=\phi_{\star}\mu_X$ of $X$  along a given realization  $(X_t(\omega)\colon t \in \znum)$. 
\end{thm}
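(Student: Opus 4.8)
The plan is to follow the architecture of Gromov's original reconstruction theorem, but carried out ``along an orbit'' so that the shift map is built into the isomorphism. There are two things to establish: first, that the pushforward $\iota_X = \phi_\star \mu_X$ is genuinely a function of the equivalence class (i.e., measure-preserving isometric systems produce the same $\iota_X$); and second, the substantive direction, that $\iota_X$ determines the system up to the equivalence of \autoref{def1}. The first direction is routine: if $\mathcal{T}:S'\to R'$ is a measure-preserving isometry intertwining $\theta_S$ and $\theta_R$, then for any orbit $(X_t(\omega))$ of $X$ the sequence $(\mathcal{T}(X_t(\omega)))$ is (a.s.) an orbit of the $R$-system, and by property (ii) of \autoref{def1} the distance matrix $(\partial_R(\mathcal{T}(X_{i}),\mathcal{T}(X_{j})))_{i,j\in\znum}$ equals $(\partial_S(X_i,X_j))_{i,j\in\znum}=\phi(\theta^0_S\circ X)$ pointwise, so the induced laws on $\mb{M}^{\text{met}}$ coincide.

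For the main direction I would proceed as follows. Fix a realization $(X_t(\omega):t\in\znum)$ and regard $\iota_X$ as the law of the random symmetric array $A=(\partial_S(X_i,X_j))_{i,j\in\znum}$ on $\mb{M}^{\text{met}}$. Step 1: use separability of $S$ together with ergodicity of $\theta_S$ under $\mu_X$ to show that, for $\mu_X$-a.e.\ $\omega$, the orbit closure $\overline{\{X_t(\omega):t\in\znum\}}$ recovers the support of the one-dimensional marginal $\nu=(\pi_0)_\star\mu_X$ and, more importantly, that the empirical distance matrix along the orbit determines $\iota_X$ — this is where the Birkhoff ergodic theorem enters, exactly as sampling i.i.d.\ points does in the classical statement (here the ``random sample'' is replaced by the deterministic orbit, whose empirical statistics converge by ergodicity). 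Step 2: construct the candidate space. Take $R$ to be the (closure of the) set of orbits equipped with the metric $\rho_S$ metrizing the product topology on $S^\znum$ as in the excerpt; equivalently, reconstruct a representative directly from $\iota_X$ by a Gromov-type argument: the array $A$ determines, via the distances $\partial_S(X_i,X_j)$, a pseudmetric on $\znum$ whose completion carries a canonical isometric copy of the orbit, and the shift on indices $i\mapsto i+1$ induces an isometry $\theta_R$; the law $\iota_X$ then pushes forward to a shift-invariant measure $\mu_R$ on the resulting sequence space. Step 3: build the map $\mathcal{T}$. Define $\mathcal{T}$ on the a.s.\ orbit by $X_t(\omega)\mapsto$ (the corresponding coordinate in the reconstructed system), extend by continuity to $S'=\overline{\{X_t(\omega)\}}$ using that $\mathcal{T}$ is distance-preserving by construction, and verify $\mu_S(S')=1$ (orbit closure has full measure by ergodicity — or, more carefully, pass to the support of $\nu$ and the subset of $S^\znum$ on which the reconstruction is faithful), $\theta_S(S')\subseteq S'$, measure preservation (this is where one invokes that $\iota_X$ is the joint law, together with a monotone-class / $\pi$-$\lambda$ argument on cylinder sets to upgrade equality of finite-dimensional distance-matrix distributions to equality of the pushed-forward measures), and the two conditions of \autoref{def1}.

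The main obstacle I anticipate is Step 2--3 glued together: reconstructing a \emph{measure-preserving} system (not just a metric measure space) from distance data along a single orbit, and in particular showing that the reconstructed $\mathcal{T}$ is measure preserving rather than merely isometric. In Gromov's classical argument one has i.i.d.\ samples and the reconstruction of the measure is via the empirical measure / law of large numbers; here the replacement is the ergodic theorem applied to the orbit, so one must be careful that the orbit is ``rich enough'' — this is precisely why ergodicity (not merely stationarity) is hypothesized, and why the statement is phrased ``along any of its orbits'' (all orbits give the same $\iota_X$ a.s.). A secondary technical point is handling the null sets: \autoref{def1} allows discarding $\theta$-invariant null sets $S'\subseteq S$, $R'\subseteq R$, and one should check the reconstruction is faithful off such a set; here \autoref{prop1}-style triviality of the tail under the natural filtration, or a direct appeal to ergodicity, does the bookkeeping. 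I would also flag that one needs $\mb{M}^{\text{met}}$ with its natural Polish topology so that the pushforward is well defined and Borel — this is immediate from continuity of $\phi$ with respect to $\rho_S$.
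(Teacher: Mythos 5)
Your proposal follows essentially the same route as the paper's proof: the easy direction is identical, and for the converse the paper likewise fixes a generic orbit, uses the Birkhoff ergodic theorem to identify measures of cylinder sets with orbit averages, defines $\mathcal{T}$ coordinate-wise on the orbit (which is an isometry because the truncated distance matrices coincide), extends it by continuity using a lemma stating that the orbit is $\mu_X$-a.e.\ dense in $\mathrm{supp}(\mu_X)$ (your ``orbit is rich enough'' point), and verifies measure preservation again via the ergodic averages. The only cosmetic difference is that the paper phrases the argument as a comparison of two systems with $\iota_X=\iota_Y$ rather than reconstructing a canonical representative, but the ingredients and their roles are the same.
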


\begin{Remark}[Reconstruction theorem applied to locally stationary processes]
    Consider again $(\mb{X}_{t,T}:t \in \znum, T \in \mb{N})$ and its corresponding auxiliary process $({\mb{X}}_t(u):t \in \znum, u \in [0,1])$. We can represent the latter as a mapping $\dot{\mb{X}}$ defined on a probability space $(\dot{\Omega},\dot{\F}, \dot{\pr})$ with $\dot{\Omega} = \{\omega: [0,1] \to S^\znum\}$, the set of all functions from $[0,1]$ taking values in the set of functions $S^\znum$, so that the canonical coordinate mappings are given by 
\[ \dot{\mb{X}}_u(\omega) =\pi_u(\omega)=\omega(u) \quad \omega \in \dot{\Omega}.
\]
 The corresponding  marginal law, denoted by $\mu_{\dot{\mb{X}}(u)}$, i.e., $\dot{\mb{X}}_u(\omega)\sim \mu_{\dot{\mb{X}}(u)}$, describes the distributional properties of the auxiliary process at rescaled time $u$. For each $u$, the marginal processes  $\dot{\mb{X}}_u$ are stationary and thus --provided the shift map is ergodic under $\mu_{\dot{\mb{X}}(u)}$-- \autoref{ergodiccase} applies to the pushforward measure $\iota_{\dot{\mb{X}}(u)}=\phi_{\star}\mu_{\dot{\mb{X}}_u}$ of $\dot{\mb{X}}_u$  along a given realization  $(\mb{X}_t(u)(\omega)\colon t \in \znum)$. Under appropriate assumptions, the pushforward measures  $\iota_{\dot{\mb{X}}(u)}$, $u\in [0,1]$, describe the localized dynamics of $(\mb{X}_{t,T}:t \in \znum, T \in \mb{N})$.
\end{Remark}

\begin{proof}[proof of \autoref{ergodiccase}] Consider another complete, separable  metric space $(R, \partial_R)$, and a stationary and ergodic process  $
Y\colon (\Omega^\prime,\F^\prime,\pr^\prime)\to (R^\znum,\mathcal{B}(R^\znum), \mu_Y)$ 
 with   $\theta_R:R^\znum\to R^\znum$ denoting the corresponding left shift map.  Let $\iota_Y={\phi_R}_{\star} \mu_Y$, where $\phi_R: R^\znum\to \mb{M}^{\text{met}}$. 
One direction is obvious: if 
${\mathscr{X}}:=(S^\znum,\rho_S, \mu_X, \theta_S)$ and ${\mathscr{Y}}:=(R^\znum,\rho_R, \mu_Y, \theta_R)$ belong to the same equivalence class ${\mathfrak{X}}$, then $\iota_X=\iota_Y$ and the random distance distributions coincide.

For the other direction, we show that if $\iota_X=\iota_Y$ then ${\mathscr{X}}, {\mathscr{Y}}\in {\mathfrak{X}}$. We proceed by  establishing the existence of an isometry between the supports of $\mu_X$ and $\mu_Y$ using the ergodic theorem, and show subsequently that this map is measure-preserving. We start by introducing an appropriate metric $\rho_V: V^\znum\times V^\znum \to \rnum$ that metrizes the product topology on $V^\znum$, where $V \in \{S, R\}$. 
 Namely, consider 
\[
\rho_V(v,v^\prime)=\sup_i \frac{\td_V(v_i,v^\prime_i)}{|i|+1}, \tageq \label{eq:supnorm} 
\]
where
\[
\td_{V}(v_i,v^\prime_i)=\min\big(\partial_V(v_i,v^\prime_i),1\big) \tageq \label{eq:dismin}.\] 
Then $(S^\znum, \rho_S)$ and $(R^\znum, \rho_R)$ are both complete, separable  metric spaces.
To establish an isomorphism $\mathcal{T}: \text{supp}(\mu_X) \to \text{supp}(\mu_Y)$, observe that by assumption 
\[
\mu_X(\phi^{-1}_X(C)) =\iota_X(C)=  \iota_{Y}(C) = \mu_X(\phi_Y^{-1}(C))\quad \forall C \in \mathcal{B}(\rnum).
\]
Further, $\mu_X$ and $\mu_Y$ are ergodic measures for the left shift map $\theta_S$ and $\theta_R$, respectively. Thus, 
 for all cylinder sets $A \in S^\znum$ and $B  \in R^\znum$, for $\mu_X$-almost all $\omega \in \Omega$ and $\mu_Y$-almost all $\omega^\prime \in \Omega^\prime$,
\begin{align*}
\mu_X(A)=\lim_T \frac{1}{T}\sum_{j=1}^T  \mathrm{1}\Big( \theta_S^j \circ X(\omega) \in A\Big) \quad {and} \quad \mu_Y(B) =\lim_T \frac{1}{T}\sum_{j=1}^T  \mathrm{1}\Big(\theta^j_R \circ  Y(\omega^\prime)  \in B\Big). \tageq \label{ergthm}
\end{align*}
Specifically, this holds for the sets $A=\phi^{-1}_X(C) \in \bor(S^\znum)$ and $B =\phi_Y^{-1}(C) \in \bor(R^\znum)$. Hence, we can fix an $\omega \in \Omega$, $\omega^\prime \in \Omega^\prime$ such that  $X(\omega) \in \mathrm{supp}(\mu_{X})$ and $Y(\omega^\prime) \in \mathrm{supp}(\mu_{Y})$  for which \eqref{ergthm} holds and for which $
\phi(\theta^j_S \circ X(\omega) ) = \phi(\theta^j_R \circ Y(\omega^\prime))$.
 Then the distances in \eqref{eq:dismin}  also satisfy
 \[
\td_{S}\big(\pi_0\big(\theta_S^i \circ X(\omega) \big), \pi_0\big(\theta_S^j \circ X(\omega)\big)\big)= \td_R\big( \pi_0(\theta_R^i \circ Y(\omega^\prime)), \pi_0(\theta_R^j \circ Y(\omega^\prime)\big),  
\]
where $\pi_0$ denotes the projection onto the 0th coordinate.
%\HD{\[\theta_S^j \circ X(\omega) = (\ldots,X_{j-1}(\omega), X_j(\omega), X_{j+1}(\omega), \ldots).\]
%Note we can also write this as
%\[
%\theta_S^j \circ X(\omega) =
%(\ldots, \pi_0(\theta_S^{j-1}\circ X(\omega)),\pi_0 (\theta_S^j \circ X(\omega)), \pi_0 (\theta_S^{j+1} \circ X(\omega)), \ldots),
%~\]
%where $\pi_0:S^\znum\to S$ denotes the projection onto the 0th coordinate. }\todo{move red to proof}
Consequently, 
\begin{align*}
\rho_S(\theta_S^i \circ X(\omega),\theta_S^j \circ X(\omega)) = \rho_R\big(\theta_R^i \circ Y(\omega^\prime)),\theta_R^j \circ Y(\omega^\prime)\big)~.\tageq \label{eq:prdist} 
\end{align*}
The latter implies that the sequences $(\theta_S^j \circ X(\omega): j\in \znum)$ and $(\theta_R^j \circ Y(\omega^\prime): j \in \znum)$ (viewed as metric spaces) are isometric since their distance matrices coincide.
Let $\mathcal{T}\colon \mathrm{supp}(\mu_X) \to \mathrm{supp}(\mu_Y)$ be the continuous map such that $\mathcal{T}(\theta_S^j \circ  X(\omega))=\theta^j_R \circ  Y(\omega^\prime)$. Then
\begin{align*}
  \rho_S(\theta_S^i \circ  X(\omega),\theta_S^j \circ  X(\omega)) =\rho_R(\mathcal{T}(\theta_S^i \circ  X(\omega),\mathcal{T}(\theta_S^j \circ   X(\omega))= \rho_R(\theta^i_R \circ  Y(\omega^\prime),\theta^j_R \circ Y(\omega^\prime))~. 
\end{align*}
To establish that this extends uniquely to an isometry $\mathcal{T}: \text{supp}(\mu_X)\to \text{supp}(\mu_Y) $ it suffices to prove the following result:
\begin{lemma}\label{prop:densup}
The sequence $(\theta_S^j \circ X(\omega)\colon j \in \znum)$ is $\mu_X$-almost everywhere dense on $\text{supp}(\mu_X)$.
\end{lemma}
Continuity of $\mathcal{T}$ and \autoref{prop:densup} yield that there is a unique extension to an isometry $\mathcal{T}\colon \text{supp}(\mu_X)\to \text{supp}(\mu_Y)$. It is measure-preserving since the isometry and the properties of the left shift maps give for $B \in \text{supp}(\mu_Y)$ such that $\mathcal{T}(A) = B$ with $A \in \text{supp}(\mu_X)$, 
\[
\mu_X(A) = \mu_X\big(\mathcal{T}^{-1}(B)\big)
=\lim_T \frac{1}{T}\sum_{j=1}^T  \mathrm{1}\big(\theta^j \circ X(\omega)  \in \mathcal{T}^{-1}(B)\big)=\lim_T \frac{1}{T}\sum_{j=1}^T  \mathrm{1}(\theta_R^j \circ Y(\omega^\prime)  \in B)=\mu_Y (B).
\] 
\end{proof}
\begin{proof}[Proof of \autoref{prop:densup}]
%We remark that the left shift map preserves the cylinder sets. 
Since $S^\znum$ is Polish every closed subset, and in particular the set $\text{supp}(\mu_X)$, is Polish. Since a separable metrizable space is second countable the latter has a countable topological base $\{U_i\}_{i \ge 0}$ which satisfies $\mu_X(U_i)>0$. To show therefore that $\text{supp}(\mu_X) \subseteq \overline{A}$, for some subspace $A$ of $S^\znum$, we recall that $x \in \overline{A}$ if and only if every basis element $U_i$ containing $x$ intersects with $A$. 
Hence, it suffices to prove that $U_i \cap A \neq \emptyset$ for all $U_i$. Or, in other words, the  set should have non-empty intersection with all base elements. We argue by contradiction. 
Suppose that there exists a $ U_i$ with $U_i \cap (\theta^j \circ X(\omega): j \in \znum) =\emptyset$. Then $X(\omega) \not\in \cup_{k=0}^\infty \theta^{-k}(U_i)$ i.e., $X(\omega) \in \cup_{i=0}^\infty V_i$, where $$V_i:=\text{supp}(\mu_X) \setminus  \bigcup_{j=0}^\infty \theta^{-j}(U_i).$$ 
Thus, it suffices to show that the set on which the orbit is not dense, $\cup_{i=0}^\infty V_i$, has measure 0. To do so, observe that $B_i=\cup_{j=0}^\infty \theta^{-j}(U_i)$ is Borel measurable and satisfies $\theta^{-1}(B_i) \subseteq B_i$ and that  $\mu_X(\theta^{-1}B_i)=\mu_X(B_i)$, since $\theta$ is measure-preserving. Thus, $B_i$ is an almost invariant Borel-measurable set, i.e., $\mu_X(\theta^{-1}B_i \triangle B_i) = 0$. Since $\theta$ is also ergodic the latter implies $\mu_X(B_i) \in \{0,1\}$. However $\theta^{-1}(U_i) \subseteq B_i$ and thus we must have $\mu(B_i)=1$ because $\mu(U_i)>0$. But then $\mu_X(V_i) =0$ and thus $\mu_X(\cup_{i=0}^\infty V_i)\le \sum_i \mu_X(V_i) =0$. Consequently, the set on which the orbit is not dense does not belong to $\text{supp}(\mu_X)$, and thus it must be $\mu_X$-almost everywhere dense on $\text{supp}(\mu_X)$. 
\end{proof}

\subsection{Characterizing measures via ball volumes}

It follows naturally from \autoref{ergodiccase} that the infinite-dimensional distance matrix distribution
\[
(\partial_S(X_t,X_s) : t,s \in \znum)  
\]
is a complete invariant of a stationary ergodic Polish-valued stochastic process $X=(X_t\colon t\in \znum)$. In other words, it is a complete invariant of a metric measure-preserving dynamical system $(S^\znum, \rho_S, \mu_X, \theta)$ where the measure-preserving map $\theta$ is ergodic under $\mu_X$.  This  property implies in particular that the stable shape descriptors built in practice from the sample of distances have access to all the geometric information of the underlying space.

For a corresponding mmpds of a stable shape descriptor, another equivalent characterization can be found. Indeed, under more restrictive assumption on the measure, a mmpds can equivalently be   completely characterized   by the process of ball volumes of the finite-dimensional distributions (fidis), via Kolmogorov's extension theorem. This result is particularly useful because even though the underlying metric measure dynamical system need not satisfy the necessary assumptions, (e.g., is not finite-dimensional or approximable by finite-dimensional spaces)
 the target spaces for most stable shape descriptors used in practice do. 
This latter characterization  forms the blueprint for very simple test statistics that are introduced in \autoref{sec4}.

To make this precise, denote the ball volume on the  $k$-dimensional product metric space $S^k$ by $B(s,r) =(s^\prime: \rho_{k}(s,s^\prime)\le r), $
where $\rho_{k}(s,s^\prime)=\max_{1\le j \le k}\partial_S(s_j, s^\prime_j)$, which metrizes the product topology for any $k \in \mathbb{N}$. Denote the distribution of the fidi on the time index set $J$ of the process $X$  by $\mu^X_{J}$.   Observe that we can associate the metric measure space $(S^{|J|},\rho_{|J|},\mu^X_{J})$ to this fidi.

\begin{thm}\label{thm:charmux}
Let $(X_t: t\in  \znum)$ be a Polish-valued stochastic process with law $\mu_{X}$
 being a regular finite Borel measure that is determined by its values on balls. 
 Let
\[
\psi( (\pi_{J} \circ X), r) =\mu^X_{J}\big( B(\pi_{J} \circ X, r)\big),  \quad \pi_J\circ X \sim \mu^X_{J}, 
\]
then the process $\big(\psi( (\pi_{J} \circ X), r): r \ge 0\big)$ characterizes the $|J|$-dimensional dynamics on the time set $J$, i.e.,
 the equivalence class of metric measure spaces $(S^{|J|}, \rho_{|J|}, \mu^X_{J})$. 
\end{thm}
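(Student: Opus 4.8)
The plan is to reduce the statement to the classical Gromov reconstruction theorem recalled at the beginning of this section, and to put the real work into showing that the ball‑volume process already determines the infinite‑dimensional distance matrix distribution of the metric measure space $(S^{|J|},\rho_{|J|},\mu^X_J)$. Write $(Z,\rho):=(S^{|J|},\rho_{|J|})$ and $\mu:=\mu^X_J$; then $(Z,\rho)$ is compact (hence Polish) and $\mu$, being a marginal of the doubling measure $\mu_X$, is again locally finite, Borel regular and doubling. By Gromov's reconstruction theorem \cite{Gromov} (equivalently, \autoref{ergodiccase} applied with the trivial shift, so that the mm‑dynamical system reduces to the classical mms $(Z,\rho,\mu,\mathrm{id})$), the equivalence class of $(Z,\rho,\mu)$ under measure‑preserving isometry is completely determined by the pushforward $\phi_{\star}\,\mu^{\otimes\nnum}$, i.e. by the law of the random matrix $\big(\rho(\zeta_i,\zeta_j)\big)_{i,j\in\nnum}$ with $(\zeta_i)$ i.i.d. $\sim\mu$. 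So it suffices to recover this law from the law of the random function $r\mapsto\psi(\zeta_0,r)=\mu\big(B(\zeta_0,r)\big)$.

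The first half of that is easy and deterministic. Since $\psi(\zeta_0,r)=\pr\big(\rho(\zeta_0,\zeta_1)\le r\mid\zeta_0\big)$ for $\zeta_1\sim\mu$ independent of $\zeta_0$, the function $\psi(\zeta_0,\cdot)$ is nothing but the conditional c.d.f. of $\rho(\zeta_0,\zeta_1)$ given $\zeta_0$. Knowing the $\mu$‑law of this conditional c.d.f. therefore determines, for every $k\ge1$, the joint law of the ``star'' $\big(\rho(\zeta_0,\zeta_1),\dots,\rho(\zeta_0,\zeta_k)\big)$ with $\zeta_0,\zeta_1,\dots,\zeta_k$ i.i.d. $\sim\mu$: first draw the conditional c.d.f., then draw $k$ independent distances from it. In the product‑metric language this is the joint law of the random variables $\max_{t\in J}\partial_S\big(X_t,Y^{(\ell)}_t\big)$, $\ell=1,\dots,k$, for i.i.d. copies of $\pi_J\circ X$.

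The crux — and the step I expect to be the main obstacle — is to upgrade these one‑centre/many‑leaf distributions to the full symmetric matrix law above, i.e. to also pin down the joint behaviour of the leaf--leaf distances $\rho(\zeta_i,\zeta_j)$ with $i,j\ge1$, which a priori are not visible from radial data. This is precisely where the doubling, Borel‑regularity and local‑finiteness hypotheses must enter essentially, and it is the point at which one has to check carefully that one‑point ball‑volume data is genuinely rich enough. The route I would take: doubling together with $\mathrm{supp}\,\mu=Z$ forces $r\mapsto\psi(x,r)$ to be strictly increasing on $[0,\operatorname{diam}Z]$ for $\mu$‑a.e.\ $x$, so the radial profile has no ``gaps'' and the rescaled restrictions of $\mu$ to small balls $B(x,r)$ are recoverable from $\psi(x,\cdot)$; then, using Vitali covering and Lebesgue differentiation for doubling measures, one identifies the local structure of $(Z,\rho,\mu)$ around a.e.\ point, transfers information between nearby centres, and glues overlapping stars consistently along their shared leaves. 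A monotone‑class / Kolmogorov‑consistency argument then assembles the finite leaf--leaf marginals obtained this way into the law of $\big(\rho(\zeta_i,\zeta_j)\big)_{i,j\in\nnum}$.

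Finally, once the law of $\phi\big((\zeta_i)_{i\in\nnum}\big)$ has been recovered, the first paragraph returns the equivalence class of $(S^{|J|},\rho_{|J|},\mu^X_J)$ under measure‑preserving isometry, which is by definition the $|J|$‑dimensional dynamics of $X$ on $J$, proving the claim. Ranging over all finite $J\subset\znum$ and invoking Kolmogorov's extension theorem then recovers $\mu_X$ itself, as asserted in the text preceding the statement.
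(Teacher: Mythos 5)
Your proposal takes a genuinely different route from the paper, and the step you yourself flag as ``the main obstacle'' is a real gap, not just a detail to be checked. The paper's proof never attempts to reconstruct the distance-matrix distribution and never invokes Gromov's theorem for this result. It argues directly on the fixed ambient space: equality in distribution of the ball-volume processes forces the two measures to coincide on metric balls of $(S^{|J|},\rho_{|J|})$, and then the hypotheses (locally finite, Borel regular, doubling) are used to conclude that a measure with these properties is determined by its values on metric balls --- this is the Vitali covering/differentiation theory for doubling measures, for which the paper defers to \cite{r22} and \cite{BWM18}. In other words, the doubling assumption enters through measure differentiation on a fixed metric space, not through any gluing of local data, and the conclusion is the stronger statement that the measures themselves agree (so the isomorphism is the identity), rather than abstract mm-isomorphism.

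The gap in your route is the passage from the one-centre ``star'' laws to the joint law of the leaf--leaf distances $\rho(\zeta_i,\zeta_j)$, $i,j\ge 1$. The law of the random function $x\mapsto\mu(B(x,\cdot))$ is, in general, a strictly weaker invariant than the full infinite distance-matrix distribution $\phi_{\star}\mu^{\otimes\nnum}$: radial data seen from a single random centre does not record how two leaves sit relative to each other, and there is no canonical way to ``glue overlapping stars consistently along their shared leaves'' because the ball-volume process does not identify which leaves are shared. Your appeal to Vitali covering and Lebesgue differentiation at this point is heuristic; those tools let you differentiate a measure against a doubling reference measure on a \emph{given} space, but they do not by themselves produce the joint leaf--leaf law from radial marginals. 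So as written the crux step is unproven, and it is also unnecessary: once you observe (as you do in your second paragraph) that the ball-volume process is the $\mu^X_J$-law of $x\mapsto\mu^X_J(B(x,\cdot))$, the efficient conclusion is the paper's --- agreement on balls plus the doubling/Borel-regularity/local-finiteness hypotheses already pins down $\mu^X_J$, with no need to pass through \autoref{ergodiccase} or the matrix invariant at all.
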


\begin{proof}
See~\cite[\S 2-\S 3]{r22} for careful discussion of the required Vitali covering property and~\cite[\S 2]{BWM18} for an outline of the argument. 
\end{proof}

\begin{Remark}\label{rem:balls}
The assumption in \autoref{thm:charmux} on $\mu_X$ is satisfied by any Borel measure on a variety of underlying metric spaces that arise in geometric data analysis, including doubling metric spaces, many Banach spaces, and any Hilbert space with the norm metric.  Most importantly, it holds for most of the spaces of values taken by shape descriptors, including the {\em barcode space} of topological data analysis~\cite{bvd23}.
\end{Remark}

A consequence of the preceding theorems is that one can often characterize the geometric features as captured by shape descriptors of a Polish-valued process $X$ via the ball volume processes of the fidis.  This is convenient for hypothesis testing. For example, note that the ball volume corresponding to the fidi on the time set $J$ is given by
\[\psi( (\pi_{J} \circ X), r)= \E_{(X^\prime) \sim \mu_J}\big[\prod_{j \in J}\mathrm{1}_{\partial_S(X_j, X^\prime_j)\le r}\big]. \tageq \label{eq:exp}\]
Thus, a natural estimator based on a sample $(X_t)_{t\in [T]}$ takes the form of a $U$-process.  There is a rich literature on $U$-statistics, although primarily focused on strong mixing and stationary data (e.g., see~\cite{bz12,bn22,BBD01,Dehetal2013,HsingWu04,VW17}). In order to develop  inference methods within the more involved framework put forward in \autoref{sec2}, we require a weak invariance principle for nonstationary $U$-processes (\autoref{thm:2parproccon}) indexed by both time and radii, under less stringent assumptions. We introduce this invariance principle within the context of detecting topological change in the below, but emphasize that the result can be used for other hypotheses, and is therefore of independent interest.

\section{Application: Detecting changes in geometric features} \label{sec4}

%The framework laid out in the previous two sections provides the necessary building blocks to develop  statistical tools to infer on the geometric features of metric space-valued processes. 
We will illustrate the framework laid out by proposing a test to detect nonstationary behavior in the geometric features of a metric space-valued process $(\mb{X}_t: t \in \znum)$. Stationarity testing in (functional) time series data has been an active area of research for decades, mainly focusing on detecting (abrupt) nonstationary behavior in the first- and second-order structure (see e.g.,~\citep{ak12-aoas, AueRS2018,avd, bhk, bdh22+, cper21,sz10,ssrn} and references therein). 
 Here, we are interested in detecting \textit{arbitrary nonstationary} behavior in the distribution caused by changes in the geometric properties. Proofs are deferred to the online supplement.

Assume $\mb{X}:=(\mb{X}_t\colon t \in \znum)$ adheres to \autoref{def:Localstationarity}, and denote its law on the time index set $J$ again by $\mu_{J}^{\mb{X}}$. %Recall that we  may view $ \pi_J \circ \mb{X}$ as belonging to an equivalence class of metric measure spaces $(S^{|J|}, \rho_{|J|}, \mu_{J}^{\mb{X}})$. 
If the dynamics are time-dependent we have $\mu_{J}^{\mb{X}} \neq \mu_{J+h}^{\mb{X}}$ for some $h \in \znum$. To reduce clutter, we focus in the exposition below on the marginal distribution, i.e., $\nu_t :=\mu_{\{t\}}^{\mb{X}} =\mb{P} \circ (\mb{X}_t)^{-1}$. We emphasize that this easily generalizes to other choices for $J$.  

By \autoref{thm:charmux} and \autoref{rem:balls}, a natural way to formulate a test for time-invariance of the geometric features is via the corresponding ball volume processes of the shape descriptors. In addition, results in \autoref{sec2} show that we can express approximation errors in terms of the Gromov-Hausdorff distance $d_{GH}$, which allows us to translate dependence assumptions imposed on the latent process to those of the $(\mathscr{B}, d_{\mathscr{B}})$-valued process (see \autoref{lem:weakgmc} and \autoref{lem:strongmc}). It follows from 
    \autoref{prop0} that the distribution of $\sh(\mb{X}_t(M))$,  denoted by $\tilde{\nu}_t$, is invariant under time translations if $\nu_t=\nu$. 
   Define the corresponding ball volume process $(\varphi_t(r): r \ge 0)$ by \[
\varphi_t(r)=\tilde{\nu}_t\big(B(\sh(\mb{X}_t(M)),r)\big), \quad \sh(\mb{X}_t(M)) \sim \tilde{\nu}_t, \tageq \label{varmar}\]
which characterizes the measure $\tilde{\nu}_t$ up to isometry by \autoref{thm:charmux}. In order to test the hypothesis that the marginal distribution is constant over time against the alternative of the distribution being time-dependent, we propose the following pair of hypotheses
\[
H_0\colon \E\varphi_t(r)=\E{\varphi}(r), \, \forall t\in \znum,  r \in [0,\mathscr{R}] \,\, \text{ vs.} \,\, H_A\colon  \E\varphi_t(r) \neq \E{\varphi}(r) \text{ for some $ t\in \znum$,$r \in [0,\mathscr{R}]$}. \tageq\label{eq:hyp}
\]
 We explain the methodology in detail in the next subsection. 

\subsection{\textbf{Methodology: a weak invariance principle in $D([0,1]\times [0,\mathscr{R}])$}}\label{sec:wip}
To construct a test for the hypotheses in \eqref{eq:hyp}, define the partial sum process
\[
\mathscr{S}_T(u,r) =\frac{1}{T^2}\sum_{s,t=1}^{\flo{uT}} h(\tilde{\mb{X}}^{n(T)}_t,\tilde{\mb{X}}^{n(T)}_s,r) \quad u \in [0,1], r \in [0,\mathscr{R}],\tageq \label{eq:partsum}
\]
where for any compact metric space $(Y,\partial_Y)$ the kernel $h:Y \times Y \times [0,\mathscr{R}] \to \rnum$ is  given by 
$h(x,x^\prime, r) =\mathrm{1}{\big\{d_\mathscr{B}\big(\sh(x), \sh(x^\prime\big) \le r\big\}}$. 
Define
 \[
\sqrt{T} U_T(u,r)= \sqrt{T}\big(\mathscr{S}_T(u,r)-u^2 \mathscr{S}_T(1,r)\big), \tageq \label{eq:ustat2}
\]
and observe that under the null hypothesis $\E U_T(u,r) = 0$. A test can therefore be based on suitably normalized versions of, respectively,  
 \[\sup_{r \in [0,\mathscr{R}]}\sup_{u\in [0,1]}
\sqrt{T}\bv U_T(u,r)\bv\tageq \label{eq:ustat1};\]
 \[T\int_0^{\mathscr{R}}\int_0^1
 \big(U_T(u,r)\big)^2 du dr.\tageq \label{eq:ustat3}\]
 
Under regularity conditions given below, \eqref{eq:ustat2} converges weakly in Skorokhod space $D([0,1]\times [0,\mathscr{R}])$ to a Gaussian process, which has a succinct form under the null. 

In order to analyze the large sample behavior of these statistics, assumptions on the dependence structure for the original and auxiliary process are required. We assume that the random functions admit representations
\[
\mb{X}_{t}:=\mb{X}_{t,T}=g\big(t,T,\mathfrak{f}_t \big),  \quad \mb{X}_{t}(u):=\ddot{g}\big(u,\mathfrak{f}_t \big),\tageq\label{eq:bershift}
\]
where $\mathfrak{f}_t =(\zeta_t, \zeta_{t-1}, \ldots)$ for an iid sequence $\{\zeta_t:  t\in\mathbb{Z}\}$  of random elements taking values in some measurable space $\mathfrak{G}$, and where  $g\colon\znum\times \mathbb{N} \times \mathfrak{G}^{\infty} \to S$, and $\ddot{g}\colon [0,1] \times \mathfrak{G}^{\infty} \to S$ are measurable functions. Note that it follows immediately from the results in \autoref{sec2} that
\[
\sh(\mb{X}_{t})=\tilde{g}\big(t,T,\mathfrak{f}_t \big),  \quad \sh(\mb{X}_{t}(u)):=\tilde{\ddot{g}}\big(u,\mathfrak{f}_t \big),\tageq\label{eq:bershift2}
\]
where $\tilde{g} = \sh \circ g$ and $\tilde{\ddot{g}} = \sh \circ \ddot{g}$. We formulate mild conditions via appropriate coupled versions of the random objects. To make this precise, let $\G_t = \sigma(\zeta_s, s\le t)$ denote the natural filtration of $(\zeta_t: t\in\mathbb{Z})$ and $\G_{t,m}=\sigma(\zeta_t, \ldots, \zeta_{t-m+1}, \zeta_{t-m}^\prime, \zeta^\prime_{t-m-1}, \ldots)$, where $\zeta_j^\prime$ is an independent copy of $\zeta_j$. In addition, define the filtration $\G_{t,\{t-j\}} := \sigma(\zeta_t, \ldots, \zeta_{t-j+1}, \zeta^\prime_{t-j}, \zeta_{t-j-1}, \ldots)$, which is the natural filtration generated by $(\zeta_{s}, s\le t)$, but where element $\zeta_{t-j}$ is replaced by an independent copy. For $\mb{X} \in \mathcal{L}^2_S$,  consider the following notion of conditional expectation 
\[
\arg{\inf}_{\substack{Z: \Omega \to S\\ \sigma(Z) \subset \mathcal{\G}}} \E \partial^2_S\big(\mb{X}, Z\big)~. 
\]
Note that this set is non-empty and a version exists since $S$ is assumed to be compact. We shall denote the elements of this set by $\E[\mb{X}|\G]$.  If $S$ is a normed vector space, then the above definition coincides with the classical notion for separable Banach spaces.  We define $m$-dependent, $m\ge 1$,  coupled versions of the metric space-valued random variables by $
\mb{X}_{m,t}:=\E\big[\mb{X}_{t}|\G_{t,m}\big]$
so that $\mb{X}_{m,t} \in\sigma(\zeta_t, \ldots,\zeta_{t-m+1})$ and, by \autoref{prop1},  $\sh({\mb{X}}_{m,t}(M))  \in\sigma(\zeta_t, \ldots,\zeta_{t-m+1})$.  In addition, we define another coupled version of $\mb{X}_{t}$ by $
\mb{X}_{\{t-j\},t}:=\E\big[\mb{X}_{t}|\G_{t,\{t-j\}}\big]$ so that $\mb{X}_{\{t-j\},t}$ as well as $\sh(\mb{X}_{\{t-j\},t})$ are $\G_{t,\{t-j\}}$-measurable. The latter are generalizations of the form put forward in  \cite{vd}, and closely related to the physical dependence measure of \cite{wu05}. The locally stationary coupled versions can be defined similarly. Observe that if $\mb{X}_t$ is $m$-dependent then $\mb{X}_{m,t}=\mb{X}_t$, whereas if $\mb{X}_t$ is independent of $\zeta_{t-j}$ then $\mb{X}_{\{t-j\},t} = \mb{X}_t$, $\mb{P}$-almost surely. Hence, $
\E \partial^p_S\big(\mb{X}_t, \mb{X}_{j,t}\big)$ and $ \E \partial^p_S\big(\mb{X}_t, \mb{X}_{\{t-j\},t}\big) 
%\label{eq:gmc}
$  
can be seen to quantify the dependence of $\mb{X}_t$ on the past of at least $j$ steps away and on $\zeta_{t-j}$, respectively.

As shown in the next lemma, these two measures of dependence can be immediately compared if $(S,\partial_s)$ is a Hadamard space. A Hadamard space is  a complete geodesic metric space of nonpositive curvature. The convexity properties of these spaces allow one to show \cite{sturm2002} that the conditional expectation is a contraction in $\mathcal{L}_S^p$, $p \in [1,\infty]$.  Notable examples include the Billera-Holmes-Vogtman space of phylogenetic trees, the Poincar{\'e} disk (and hyperbolic spaces more generally), and any negatively curved Riemannian manifold with the intrinsic metric.
\begin{lemma}
   If $(S,\partial_s)$ is a Hadamard space, then for $p \ge 1$ 
    \[
   \E \partial^p_S\big(\mb{X}_t, \mb{X}_{\{t-j\},t}\big)\le  2\E \partial^p_S\big(\mb{X}_t, \mb{X}_{j,t}\big)~.
    \]
\end{lemma}
\begin{proof}
%This is an extension of \cite{vd19unp}[lemma 0.1]. 
By the triangle inequality 
    \begin{align}
\E \partial^p_S\big(\mb{X}_t, \mb{X}_{\{t-j\},t}\big) \le  \partial^p_S\big(\mb{X}_t, \E[\mb{X}_{j,t}|\G_{t,\{t-j\}}]\big)+\partial^p_S\big( \E[\mb{X}_{j,t}|\G_{t,\{t-j\}}], \E[\mb{X}_t|\G_{t,\{t-j\}}]\big)~.
\end{align}
Both terms on the right-hand side are bounded
by $\E \partial^p_S\big(\mb{X}_t, \mb{X}_{j,t}\big)$ which follows by $\G_{t,\{t-j\}}$-measurability of $\mb{X}_{j,t}$ and the contraction property, respectively. 
\end{proof} 

We emphasize that we do not require $(S,\partial_S)$ to be a Hadamard space in the following discussion, and that the results put forward hold for any compact metric space.

\begin{Remark}
   Instead of taking $\mb{X}_{\{t-j\},t}$ one could also consider the  physical dependence version $
\ddot{\mb{X}}_{\{t-j\},t} = g(t, T, \mathfrak{f}_{t,\{t-j\}}
)$, 
where  the sequence $\mathfrak{f}_{t,\{t-j\}}$ has the entry with index $t-j$ replaced with an independent copy. However, if $(\mb{X}_t) \in \mathcal{L}^2_S$ and $(S,\partial_S)$ is Hadamard, then a similar argument as in the above lemma shows that the version $\mb{X}_{\{t-j\},t}$ implies slightly weaker assumptions. \end{Remark}

We now turn to the conditions required to prove the weak invariance principle, which inherently relies on conditions on the kernel of the $(\mathscr{B},d_{\mathscr{B}})$-valued process. We relate this to above dependence measures in \autoref{lem:weakgmc} and \autoref{lem:strongmc}, respectively. The representations \eqref{eq:bershift}, \eqref{eq:bershift2} provide natural conditions on the demeaned kernel in terms of projections. More specifically, let $\hti$ indicate the demeaned kernel; $\hti(\cdot,\cdot, r) = h(\cdot,\cdot,r)- \E h(\cdot,\cdot, r)$. Then for an integrable real-valued random variable $Z$, define the projection operator 
 $P_j(Z) = \E[Z|\G_{j}]-\E[Z|\G_{j-1}]$, $j \in \znum$, 
. For an $(Y,\partial_Y)$-valued process $W =(W_t: t \in [T], T \in \nnum)$, we introduce the following dependence measure
\[
\nu^W_{t,s}(j,r)=\sup_{m}\Big\| P_{t-j}\Big(\hti(W_{m,t}, W_{m,s},r)\Big)\Big\|_{\rnum,2}, \tageq\label{thenus}
\]
where we denoted $\|\cdot\|_{\rnum,q}=(\E\|\cdot\|^q_{\rnum})^{1/q}$. 
Upper bounds can be expressed in terms of
\begin{align*}\delta^W_{m,q}(r) &= \sup_{T \in \nnum} \sup_{\substack{t,s\in [T]: s \le t}}  \big\|\hti(W_{t},W_{s},r)-\hti(W_{m,t},W_{m,s},r)\big\|_{\rnum,q}, \tageq\label{thedeltas}\\
  \theta^W_{j,j-k,q}(r) &= \sup_m \sup_t \big\|\hti(W_{m,t},W_{m,t-k},r) -\hti(W_{\{t-j\},m,t},W_{\{t-j\},m,t-k},r)\big\|_{\rnum,q},\tageq\label{eq:thethetas}
\end{align*}
respectively. We write $\delta^W_m:=\delta^W_{m,2}$ and $\theta^W_{j,j-k}:=\theta^W_{j,j-k,2}(r)$. 
We make use of the following result, which is an extension of proposition 4 of \cite{HsingWu04}, and which relates \eqref{thenus} to \eqref{thedeltas}-\eqref{eq:thethetas}.
\begin{Proposition} \label{sumbound}
Let $(W_t)$ be adapted to the filtration $(\G_t)$. Then 
\begin{enumerate}\itemsep1.5ex
\item[(i)] $\sup_{k \ge 0} \sum_{j=0}^{\infty} \sup_t\Big\|P_{t-j}(\hti(W_t,W_{t-k},r))\Big\|_{\rnum,q}  \le 2\sum_{j=0}^{\infty} \delta^W_{j,q}(r) \wedge \sup_{k \ge 0} 2\sum_{j=0}^{\infty} \theta^W_{j,j-k,q}(r)  $;
\item[(ii)] for any $\epsilon$, 
\begin{align*}
   \sup_{k \ge 0} \sum_{j=0}^{\infty} \min({\nu}^W_{t,t-k}(j,r),\epsilon) \le  4 \sum_{j=0}^{\infty} \min(\sup_{m > j}\delta^W_m(r),\epsilon) \wedge  2\sup_{k \ge 0} \sum_{j=0}^{\infty} \min(\theta^W_{j,j-k}(r) ,\epsilon), 
\end{align*}
\end{enumerate}
where $x\wedge y =\min(x,y)$.
\end{Proposition} 
Before turning to the conditions on the latent processes, we make the assumptions on the kernel precise that ensure the weak invariance principle holds.   
\autoref{as:dist1} below gives these for a fixed collection of radii. This is then strengthened underneath \autoref{thm:limfixr} to an invariance principle with weak convergence in $D([0,1] \times [0,\mathscr{R}])$. 
\begin{assumption}\label{as:dist1} \label{as:dep}
The following conditions hold for $q \ge 2$:
\begin{enumerate}[label=(\roman*)]
\item For some $\kappa>q/2$, 
$$
\sup_{t \neq s, r \in \rnum} \pr\Big(r \le \partial_{\mathscr{B}}\big(\sh(W_{t}),
\sh(W_{s})\big) \le r+\epsilon\Big) = O\big(\log^{-2\kappa}(\epsilon^{-1}) \big)\quad \forall  \,0<\epsilon <1/2; 
$$
 \item $\sum_{j=0}^{\infty} \sup_{m > j}\delta^W_{m,q}(r)<\infty$ or $ \sup_{k \ge 0}\sum_{j=0}^{\infty}\theta^W_{j,j-k,q}(r) <\infty$ and $\lim_{j \to \infty} \delta^W_{j,q}(r)=0$.
\end{enumerate}
\end{assumption}
By \autoref{sumbound}, \autoref{as:dep}(ii) implies 
\[\lim_{\epsilon \to 0}  \sup_{k\ge 0} \sum_{j=0}^{\infty} \sup_t\min(\nu^W_{t,t-k}(j,r), \epsilon) = 0. \tageq \label{eq:limepssum}\]
To elaborate on its use, note that  \eqref{eq:partsum} can be viewed as a $U$-statistic applied to a nonstationary process. The proof  relies on a type of Hoeffding decomposition, which essentially decomposes the statistic (after scaling) into a linear part that defines the distributional properties, and an error component that should converge to zero in probability at an appropriate rate. The latter component consists of several (atypical) approximation errors, among which the error from the data arising in the form of point clouds and a nonstationary error (see the proof of \autoref{thm:limsupm}). These can be controlled meaningfully if a result such as \eqref{eq:limepssum} holds.

The  assumptions introduced here are relatively mild compared to  existing literature in the stationary case (e.g., see~\cite{BBD01,HsingWu04,DGS21}).  To the best of our knowledge, no relevant results exist in the nonstationary setting. If \autoref{as:dep}(i) holds, conditions on the latent process such as geometric moment contraction assumptions are sufficient to ensure the summability conditions on the kernel measures in \autoref{as:dep}(ii)) are satisfied. 

\begin{lemma}\label{lem:weakgmc}
Let $0<a(p), b(p)<1$ for some $p\ge 1$.   
Under  \autoref{as:dep}(i); 
\begin{enumerate}
    \item[(a)] if $\sup_{t}\E\partial^p_S\big(\mb{X}_{t}, \mb{X}_{m,t}\big) \le a(p)^m$ then  $\sup_r \sum_j \sup_{ m > j} \delta^{\sh(\tilde{\mb{X}})}_{m,q}(r) < \infty$;
    \item[(b)] if $\sup_{m,t} \E \partial^p_S\big(\mb{X}_{m,t}, \mb{X}_{\{t-j\},m,t}\big) \le b(p)^j$
then $
\sup_r \sup_k\sum_j \theta^{\sh(\tilde{\mb{X}})}_{j,j-k,q}(r)< \infty$.
\end{enumerate}
\end{lemma}

We now obtain a functional CLT for \eqref{eq:partsum} as a process in $u \in [0,1]$.

\begin{thm}\label{thm:limfixr}
Let $(\mb{X}_{t,T})$ satisfy \autoref{def:Localstationarity}(i) with  $(S,\partial_S)=(C(M,M^\prime),\rho)$, and assume that  $(\tilde{\mb{X}}_{t,T})$ and    $(\tilde{\mb{X}}_t(\frac{t}{T}))$ with
representations \eqref{eq:bershift} fulfill \autoref{as:dep} with $q=2$. Furthermore, assume  \autoref{as:mincov} with  $n(T) \to \infty$ as $T\to \infty$. Then the partial sum process given by \eqref{eq:partsum} satisfies for any fixed $r_1, \ldots, r_d \in [0,\mathscr{R}]$,  
\[\Big\{T^{1/2} \Big(\mathscr{S}_T(u,r_i)-\E \mathscr{S}_T(u,r_i)\Big) : i\in [d]\Big\}_{u \in [0,1]}{\rightsquigarrow} \Big\{\mathbb{G}(u,u,r_i)  : i\in [d]\Big\}_{u \in [0,1]} \quad (T \to \infty) \]
in $D[0,1]$ with respect to the Skorokhod topology, where $\{\mathbb{G}(u,v,r_i): i\in [d]\}_{v \le u \in [0,1], }$ is a zero-mean Gaussian process given by
\[
\mathbb{G}(u,v,r_i):=\int_0^u \sigma(\eta,v,r_i) d\mathbb{B}(\eta),  \tageq\label{eq:Gauslim}\]
where $\mathbb{B}$ is a standard Brownian motion and
\begin{align*}
 \sigma^2(\eta,v,r)
= 4\sum_{k \in \znum} \text{Cov}\Big(\int_0^v\E_{\tilde{\mb{X}}^\prime_0(w)}[\hti(\tilde{\mb{X}}_0(\eta), \tilde{\mb{X}}^\prime_0(w),r)]dw, \int_0^v\E_{\tilde{\mb{X}}^\prime_0(w)}[\hti(\tilde{\mb{X}}_k(\eta),\tilde{\mb{X}}^\prime_0(w),r)]dw\Big)
\end{align*}
with $(\mb{X}^\prime_t(u): t\in \znum, u \in [0,1])$ an independent copy of  $(\mb{X}_t(u): t\in \znum, u \in [0,1])$, and of which the covariance structure is given by
\begin{align*}
Cov(\mb{G}(u_1, v_1,r_1), \mb{G}(u_2,v_2, r_2))= \int_0^{\min(u_1,u_2)} \sigma(\eta, v_1, r_1)\sigma(\eta,v_2,r_2) d \eta.
\end{align*}
\end{thm}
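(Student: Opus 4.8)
# Proof Proposal

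\textbf{Overall strategy.} The plan is to reduce the functional CLT for the (degenerate-kernel) partial sum $U$-process to a CLT for a sum of martingale differences, via a Hoeffding-type projection argument combined with an $m$-dependent approximation. The four main layers are: (1) replace the point-cloud data $\tilde{\mb{X}}^{n(T)}_t$ by the latent functions $\tilde{\mb{X}}_t$ and then by the stationary approximations $\tilde{\mb{X}}_t(t/T)$, controlling each substitution by the stability theorem \eqref{thm:stab}, \autoref{lem:dghds}, \autoref{as:mincov}, and the local-stationarity bounds \eqref{eq:locstatp}; (2) perform a Hoeffding decomposition of $\mathscr{S}_T(u,r)$ into its projection onto the ``linear'' part $\Pi_T(u,r) := \frac{1}{T}\sum_{t=1}^{\flo{uT}} g_t(r)$, where $g_t(r) = \frac{1}{T}\sum_{s=1}^{T}\E_{\tilde{\mb{X}}'_s}[\hti(\tilde{\mb{X}}_t, \tilde{\mb{X}}'_s, r)]$ is the conditional mean against an independent copy, plus a quadratic remainder; (3) show the remainder is $o_p(T^{-1/2})$ uniformly in $(u,r)$ using the dependence measures \eqref{thenus}--\eqref{thedeltas} and \autoref{sumbound}; (4) establish the invariance principle for the linear part by writing $g_t(r)$ through its projection-operator expansion $g_t(r) = \sum_{j \le t} P_j g_t(r)$, interchanging summation order to get a martingale array, and applying a triangular-array martingale FCLT (e.g.\ of the type used in \cite{HsingWu04}, with the nonstationary modification handled via the $u$-parametrization of the variance).

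\textbf{Step-by-step.} First I would fix $r_1,\ldots,r_d$ and work coordinatewise, since finite-dimensional-in-$r$ joint convergence follows from the Cramér--Wold device applied to linear combinations of the kernels $h(\cdot,\cdot,r_i)$. Second, I would introduce the $m$-dependent kernel $\hti(\mb{X}_{m,t}, \mb{X}_{m,s}, r)$ and prove a lemma (call it \autoref{thm:limsupm}, as the excerpt references): the FCLT holds for the $m$-dependent version with a limit variance $\sigma^2_m$, and $\sigma^2_m \to \sigma^2$ as $m \to \infty$; here \autoref{as:dep}(ii) together with \eqref{eq:sumbound1} gives summability of the $\delta^W_j$, which controls the approximation error $\|\mathscr{S}_T - \mathscr{S}_{T,m}\|_\infty$ uniformly. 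Third, for fixed $m$, the $m$-dependence makes the martingale-difference structure explicit: $P_{t-j}(\hti(\mb{X}_{m,t},\mb{X}_{m,s},r))$ vanishes for $j > m$, so the linear part is a finite-range-dependent array and its partial sums converge to $\int_0^u \sigma_m(\eta) d\mathbb{B}(\eta)$ by a standard argument, where the time-varying $\sigma_m(\eta)$ arises because the summands near index $t$ are distributed as the stationary approximation at level $u = t/T \approx \eta$ — this is exactly where local stationarity enters the limit. Fourth, I would verify tightness in $D[0,1]$ via a moment bound on increments $\E|T^{1/2}(\mathscr{S}_T(u_2,r) - \mathscr{S}_T(u_1,r))|^2 \lesssim |u_2 - u_1|$, which follows from the covariance summability; combined with fidi convergence this gives weak convergence. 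Finally, I would identify the covariance of $\mathbb{G}$ by direct computation from \eqref{eq:Gauslim}: $\Cov(\mathbb{G}(u_1,v_1,r_1),\mathbb{G}(u_2,v_2,r_2)) = \int_0^{\min(u_1,u_2)}\sigma(\eta,v_1,r_1)\sigma(\eta,v_2,r_2)\,d\eta$ is the Itô isometry for the stochastic integral representation.

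\textbf{Main obstacle.} The hardest part will be controlling the nonstationary degenerate-$U$-statistic remainder uniformly in $(u,r)$ — i.e.\ showing that the second-order term in the Hoeffding decomposition of $\sqrt{T}(\mathscr{S}_T(u,r) - \E\mathscr{S}_T(u,r) - 2\Pi_T(u,r) + \text{correction})$ is $o_p(1)$ in the sup-norm. Degenerate $U$-statistics of dependent, nonstationary data do not have an off-the-shelf maximal inequality; I expect to need a chaining argument over $[0,1]\times[0,\mathscr{R}]$, where the metric-entropy bound in the $r$-direction is supplied precisely by \autoref{as:dep}(i) (the anti-concentration condition $\pr(r \le \partial_{\mathscr{B}}(W_s,W_t) \le r+\epsilon) = O(\log^{-2\kappa}(\epsilon^{-1}))$ ensures the kernel $r \mapsto h(\cdot,\cdot,r)$ is not too rough in $L^2$), and in the $u$-direction by dyadic blocking. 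The second delicate point is bookkeeping the three separate approximation errors simultaneously — point-cloud error $o_p(\alpha_T)$, the $O_p(T^{-1})$ local-stationarity error, and the $m$-dependence error — and arranging the order of limits ($T\to\infty$ then $m\to\infty$) so that a single $\epsilon$-$\delta$ / converging-together argument closes; the anti-concentration assumption is again what converts the small perturbations of the arguments of $h$ into small perturbations of $h$ itself, since $h$ is an indicator and hence discontinuous.
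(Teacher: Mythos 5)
Your proposal is correct and follows essentially the same route as the paper: the sequence of substitutions (point cloud $\to$ latent process $\to$ stationary approximation $\to$ $m$-dependent version) controlled by stability plus the anti-concentration condition, a Hoeffding-type projection onto a martingale linear part, a martingale FCLT with the iterated limits $T\to\infty$ then $m\to\infty$ closed by a converging-together argument, and Cram\'er--Wold for the multivariate statement. The only cosmetic difference is that the paper obtains $D[0,1]$-convergence directly from the martingale FCLT (Lindeberg condition plus convergence of the conditional variance) rather than via a separate increment-moment tightness bound, and the chaining in $r$ you anticipate is only needed for the two-parameter extension, not for this fixed-$r$ theorem.
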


Weak convergence in $D([0,1]\times [0,\mathscr{R}])$ requires slightly stronger assumptions. 
\begin{thm} \label{thm:2parproccon}
Assume the conditions of  \autoref{thm:limfixr} hold, and that 
\begin{enumerate}[label=(\roman*)]
\item  $\sup_n\sup_{k\ge 0}\sum_{j=0}^{\infty} j\sup_{t}\big\|P_{t-j}\big(\hti(\mb{\tilde{X}}^{n}_{t},\mb{\tilde{X}}^{n}_{t-k},r)\big)\big\|_{\rnum,q}<\infty$,  $q=4+\gamma$, $\gamma>0$; 
\item  There exists a constant $C^\prime$ such that for all $t,T$, 
\begin{align*}
&\sup_{k\ge 0}\sup_{a<b} \frac{1}{b-a}{\pr\Big(\partial_\mathscr{B}\big(\sh(\mb{\tilde{X}}^{n(T)}_{t}),\sh(\mb{\tilde{X}}^{n(T)}_{t-k})\big) \in (a,b]\bv \G_{t-1}\Big)}  \le C^\prime  \text{ almost surely}. \tageq \label{densuni1}
\end{align*} 
\end{enumerate}
Then
\[\Big\{T^{1/2} \Big(\mathscr{S}_T(u,r)-\E \mathscr{S}_T(u,r)\Big)\Big\}_{u \in [0,1], r\in [0,\mathscr{R}]}\overset{D}{\rightsquigarrow} \Big\{\mathbb{G}(u,u,r)\Big\}_{u \in [0,1],r\in [0,\mathscr{R}]} \quad (T \to \infty) \]
in $D([0,1]\times [0,\mathscr{R}])$ with respect to the Skorokhod topology, where $\{\mathbb{G}(u,v,r)\}_{u,v \in [0,1], r\in [0,\mathscr{R}]}$ is the zero-mean Gaussian process defined in \eqref{eq:Gauslim}.
\end{thm}

 Condition (i) is necessary to derive tightness, and is implied by the following assumption on the latent process.

\begin{lemma}\label{lem:strongmc}

Assume that $\sup_{t}\E\partial_S\big(\mb{{X}}_{t}, \mb{{X}}_{m,t}\big) \le a^m$ for some $0<a <1$ and that \eqref{densuni1} holds. Then condition (i) of \autoref{thm:2parproccon} is satisfied.
\end{lemma}

\begin{Remark}[{Correlation dimension}]
It is worth mentioning that    \autoref{thm:2parproccon} can be used to draw inference on the correlation dimension, i.e., $D=\lim_{r \to 0} \log\big(\tilde{\varphi}_t(r)\big)/\log(r)$.
Correlation dimension is a type of fractal dimension \citep{GrPr83}, which describes the complexity of a dynamical system in terms of the ratio of change of detail to the change in scale.  %Analysis of the correlation dimension thus provides a means to  distinguish artificial features from actual features present in the data. 
    \end{Remark}
    
With the invariance principle on the partial sums in place, we turn to the test statistic. To ease notation below, we write $\mathbb{G}(u,r):=\mathbb{G}(u,u,r)$. Under the conditions of \autoref{thm:2parproccon}, the continuous mapping theorem yields for the process defined in~\eqref{eq:ustat2}
\begin{align*}
\Big\{T^{1/2} & \Big(U_T(u,r)-\E U_T(u,r)\Big) \Big\}_{u \in [0,1],r\in [0,\mathscr{R}]}
 \underset{T \to \infty}{\longsquiggly}\Big\{
 %\int_0^u \sigma(\eta,u,r) d\mathbb{B}(\eta) - u^2 \int_0^1 \sigma(\eta,1,r) d\mathbb{B}(\eta) 
 \mb{G}(u,r) - u^2 \mb{G}(1,r)
 \Big\}_{u \in [0,1],r\in [0,\mathscr{R}]}.
\end{align*}
Under the null hypothesis of stationarity this reduces to
\[
\Big\{T^{1/2} U_T(u,r) \Big\}_{u \in [0,1],r\in [0,\mathscr{R}]}\quad \underset{T \to \infty}{\longsquiggly}\quad \Big\{u \sigma
(r)\big( \mathbb{B}(u)-u\mathbb{B}(1)\big)\Big\}_{u \in [0,1],r\in [0,\mathscr{R}]}.
\]
Given a consistent estimator  $\hat{\sigma}_T(r)$ of $\sigma(r)$ one finds
\[
\sup_r\sup_{u}\frac{\sqrt{T}|\mathscr{S}_T(u,r)-u^2 \mathscr{S}_T(1,r)|}{\hat{\sigma}_T(r)} \underset{T \to \infty}{\Longrightarrow} \sup_{u}  u|\mathbb{B}(u)-u\mathbb{B}(1)|~, \tageq \label{test:nonpiv}
\]
and
\begin{align}
T\int_0^\mathscr{R} \int_0^1 \frac{  \big({\mathscr{S}}_T(u,r)-u^2 {\mathscr{S}}_T(1,r)\big)^2 }{  \hat{\sigma}^2_T(r) } du dr\underset{T \to \infty}{\Longrightarrow} \mathscr{R}  \int_0^1 u^2 \big( \mb{B}(u)-u \mb{B}(1)\big)^2 du. \tageq \label{test:nonpiv2}
\end{align}
Thus, a natural decision rule is to reject the null hypothesis in~\eqref{eq:hyp} based on~\eqref{eq:ustat1} (resp. \eqref{eq:ustat3})  
 whenever the left-hand side of~\eqref{test:nonpiv} (resp. \eqref{test:nonpiv2})
is larger than the $(1-\alpha)$th quantile of the distribution of the random variable on the right-hand side. However, even under the null hypothesis the covariance structure relies on a  long-run covariance. 
It is well-known in more classical settings that estimation of the long-run covariance is not an easy task because it requires selecting a bandwidth parameter. The size and power of the test can be heavily distorted if the choice of the parameter does not incorporate some essential information on the dependence structure \citep{Vogelsang1999,DengPerron08}. To avoid this issue altogether, we introduce a self-normalization approach next.

\subsection{Self-normalized tests}  
 
We introduce two families of range-based self-normalized versions, one for \eqref{eq:ustat1} and one for \eqref{eq:ustat3}. %Range-based statistics go back to~\cite{mand} (see \cite{ssrn} for an approach applied to break detection in the mean of a stationary time series).  
To make the first of these  precise, define  
\begin{align*}
{V}_T(r)=\max_{1\le k\le T} \big(U_T(k/T,r)-\E U_T(k/T,r\big)- \min_{1 \le k \le T} \big(U_T(k/T,r)-\E U_T(k/T,r)\big).  \tageq \label{eq:varests}
\end{align*}
We propose the following self-normalized  test for \eqref{eq:ustat1} using \eqref{eq:varests}
\[
\mathbb{D}^{\text{max}}_T:=\max_r  w(r) \max_k {V_T({r})}^{-1}|U(k/T,r)|,  \tageq \label{eq:Dtmax}\]
where $w(r)\ge 0$ is a weight function. 
Note that an introduction of a weight function is quite common in the literature to control for outlier patterns in ecdf-based tests. 
The finite sample performance and the presence of outliers is  particularly relevant in our context since genomic data applications are typically in the regime of $T<100$ and $n < 10K$. 
 In Section \ref{sub:datadriven}, we elaborate on this and introduce a data-adaptive function that changes the distribution of weights to or from the tails based on the properties of the increments of the time-dependent empirical cdf's.

\begin{Corollary}\label{cor:Dmax}
Under the conditions of \autoref{thm:2parproccon},
\begin{align*}
&\max_{r,k}  \frac{w(r)|U_T(k/T,r)-\E U_T(k/T,r)|}{{V}_T(r)} 
\\&\phantom{maxrk}\underset{T \to \infty}{\Rightarrow} 
 \sup_r  \frac{ w(r) \sup_{u}\vert \mathbb{G}(u,r) -u^2 \mb{G}(1,r)\vert}{
\sup_{u}\big(\mathbb{G}(u,r) -u^2 \mb{G}(1,r)\big) -\inf_{u}\big(\mathbb{G}(u,r) -u^2 \mb{G}(1,r)\big)}~.
\end{align*}
Furthermore, assume that  $\max_r w(r) =1$. Then under $H_0$, \eqref{eq:Dtmax} satisfies
\[
\mathbb{D}^{\text{max}}_T  \underset{T \to \infty}{\Rightarrow}  \sup_{u}  \frac{ |u\mathbb{B}(u)-u^2\mathbb{B}(1)|}{ 
\sup_u \big(u\mathbb{B}(u)-u^2\mathbb{B}(1)\big) -\inf_{u}\big(u\mathbb{B}(u)-u^2\mathbb{B}(1)\big)} =: \mathbb{D}^{\text{max}}. \tageq\label{eq:test1}
\]
\end{Corollary}
To consider quadratic tests, define 
\begin{align*}
V^L_T(r_i)=
\max_{1\le k\le T} \Big(U_T\big(\frac{k}{T},r_i\big)-\E U_T\big(\frac{k}{T},r_i\big)\Big)^2 - \min_{1 \le k \le T} \Big(U_T\big(\frac{k}{T},r_i\big)-\E U_T\big(\frac{k}{T},r_i\big)\Big)^2 ,  \tageq \label{eq:varest2}
\end{align*}
where $r_i \in (0,\mathscr{R})$. Henceforth, we take $i = 1,\ldots, T$, $r_i =i\mathscr{R}/T$. A self-normalized version of \eqref{test:nonpiv2} based on \eqref{eq:varest2} is given in the following corollary: 
\begin{Corollary}\label{cor:DL}
Under the conditions of \autoref{thm:2parproccon},
\begin{align*}
&\frac{\mathscr{R}}{T^2} \sum_{i=1}^T \sum_{k=1}^T  w(r_i) \frac{\big(U_T(k/T,r_i)-\E U_T(k/T,r_i)\big)^2}{ V^L_T(r_i)}
\\& \quad \quad \quad \underset{T \to \infty}{\Rightarrow} \int_0^{\mathscr{R}} \int_0^1 \frac{  w(r)\big(\mb{G}(u,r) - u^2 \mb{G}(1,r)\big)^2 }{\sup_{u}\big(\mathbb{G}(u,r) -u^2 \mb{G}(1,r)\big)^2 -\inf_{u}\big(\mathbb{G}(u,r) -u^2 \mb{G}(1,r)\big)^2 }du dr~.
\end{align*}
Furthermore, assume that  $\sum w(r) =1$. Then we obtain under $H_0$
\begin{align*}
\mb{D}^L_T:&=\frac{ \mathscr{R}}{T^2} \sum_{i=1}^T \sum_{k=1}^T w(r_i) \frac{\big(U_T(k/T,r_i)\big)^2}{ V^L_{T,H_0}(r)} 
\\& \underset{T \to \infty}{\Rightarrow} \frac{ \int_0^1 \big(u\mb{B}(u) - u^2 \mb{B}(1)\big)^2 du  }{\sup_{u}\big(u\mathbb{B}(u) -u^2 \mb{B}(1)\big)^2 -\inf_{u}\big(\mathbb{B}(u) -u^2 \mb{B}(1)\big)^2 } =: \mathbb{D}^{L}, \tageq\label{eq:DtL}
\end{align*}
where $V^L_{T,H_0}(\cdot)$ is the estimator \eqref{eq:varest2} with the demeaned kernel $\hti$ replaced with $h$.
\end{Corollary}
 We propose several choices for the weights, which are explained below and compared in a simulation study.

\begin{Remark}[{ Computing the test statistics}] \label{compu}
The self-normalized tests are extremely simple to implement once the ensemble of point clouds has been transformed by the stable shape descriptor. To test for \eqref{eq:hyp}, one merely needs to compute the statistic of choice $\mb{D}^{\text{max}}_{T}$ and $ \mb{D}^L_{T}$ as well as the corresponding quantiles $q_{1-\alpha}(\mb{D}^{\text{max}})$ and $q_{1-\alpha}(\mb{D}^L)$, which can be easily simulated by the user. Implementation of the stable shape descriptors and the corresponding metric relies on existing code. For example, to compute  persistent homology and the bottleneck distance in Section \ref{sec:sim}, we used Ripser and Hera~\citep{b19, ripser, hera}.
\end{Remark} 
 
 \subsubsection{Data-driven weight function} \label{sub:datadriven}

Outliers are common in genomic data, and many invariants from geometric data analysis are sensitive to outliers (e.g., see~\cite[\S 4]{bgmp14}). A suitable weight function can mitigate their effect. However, a weight function that suppresses outliers {\`a} priori can increase the type II error since irregularities are also an indication of  nonstationary behavior.  Therefore, we need an appropriate function that suppresses outlier patterns under $H_0$ but simultaneously controls the type II error. Note that this function must be time-independent.

To fix ideas, suppose $F: [0,\mathscr{R}] \to [0,1]$ is a cdf that captures the distribution pattern. Natural choices for the weights are functionals of $F$ such as  
\[
\tilde{w}_a(r) = \big( {F}(r)(1-{F}(r))\big)^a \text{ and } \dot{w}_a(r) =  {F}(r)^a+\big(1-{F}(r)\big)^a~.\tageq \label{eq:we}
\]
A disadvantage is that different outlier patterns typically require a different functional and an inappropriate choice can negatively affect statistical properties~\cite{ozturk}. 
Alternatively, a more adaptive choice would use the density approximation obtained by the increments
\[
{I}(r_i) := {F}(r_{i})-{F}(r_{i-1}). \tageq \label{eq:incr}
\]
Under stationarity, \eqref{eq:incr} can be used effectively to reduce the effect of outliers when $F$ is the common cdf. 
However, the cdf is time-dependent under the alternative and an aggregrate $F$ at most reflects the average distribution pattern.
To construct the weight function, we need an empirical version of the most suitable choice for $F$ in our case. For  this, we use the time-averaged empirical cdf
 \[
\bar{\mathscr{S}}_T(r):=\frac{1}{T}\sum_{k=1}^T \frac{\mathscr{S}_T(k/T,r)}{k^{3/2}} T^{3/2}~.
 \]
Note that under the conditions of \autoref{thm:2parproccon},
\[
 \Big\{\sqrt{T}\bar{\mathscr{S}}_T(r)-\frac{1}{\sqrt{T}}\sum_{k=1}^T \frac{\E \mathscr{S}_T(k/T,r)}{k^{3/2}} T^{3/2}\Big\}\medsquiggly\Big\{\int_0^1 u^{-3/2}\mb{G}(u,r) du\Big\}_{ r \in [0,\mathscr{R}]},
\]
which converges to %$\sqrt{1/3}\mb{B}(1)\sigma(r)$ 
$\sigma(r)\int_0^1 \mb{B}(u) du$ under $H_0$. 
If additionally the weight functional $w$ is differentiable with uniformly bounded first derivative, Taylor's theorem yields  $w(\bar{\mathscr{S}}_T(r)) = w(\E \bar{\mathscr{S}}_T(r))+ o_p(1)$. Thus, \autoref{cor:Dmax} and \autoref{cor:DL} hold with such weight functionals applied to the time-averaged ecdf. 
Under $H_0$, the local ecdf's are equal in distribution and $\bar{\mathscr{S}}_T$ approximates the common cdf $\mathscr{S}$. Under $H_A$, the time-averaged cdf is the closest time-independent cdf to the collection of time-dependent cdfs in an $L^2$-sense \citep{vDCD18}. 
However, $w(\bar{\mathscr{S}}_T(r))$ could suppress local abnormalities,  leading to loss of power. To avoid this, we push towards uniform weights  using the following criterion.  Define the increments of the local ecdfs, $\{I(k/T,r): k, r\}$, given by
\[
I(k/T,r)= (T/k)^{3/2}\big({\mathscr{S}}_T(k/T,r)-{\mathscr{S}}_T(k/T,r-1/T)\big).
\]
Time-localized abnormalities should cause a (temporary) change in the correlation of overlapping increments in time direction. More specifically,   
 \begin{align*}
\Cov(I(u,r),I(v,r))&% =\text{var}(I(v,r))+\Cov(I(u,r)-I(v,r),I(v,r))
%\\&
\neq \Cov(I(u+\delta,r),I(v+\delta,r)), \quad v<u, v,u,v+\delta,u+\delta \in (0,1). \tageq \label{eqref:covin}
 \end{align*}
Thus, as a measure we consider the empirical correlation of the increments between time $u=k/T$ and $v+u=1/T+k/T$; 
\begin{align*}
\hat{\varrho}_v(u) :=\text{\scalebox{0.95}{ $\frac{\frac{1}{T}\sum_{i=1}^{T}\big(I(u,r_i)-\frac{1}{T}\sum_{i=1}I(u,r_i)\big)\big(I(u+v,r_{i})-\frac{1}{T}\sum_{i=1}^T I(u+v,r_{i})\big)}{\sqrt{ \frac{1}{T}\sum_{i=1}^{T} \big(I(u+v,r_i)-\frac{1}{T}\sum_{i=1}^T I(u+v,r_{i})\big)^2}\sqrt{\frac{1}{T}\sum_{i=1}^{T} \big(I(v,r_{i})-\frac{1}{T}\sum_{i=1}^T I(v,r_{i})\big)^2}}$}}~.
\end{align*}
If the standard error $\text{sd}(\hat{\varrho}_{1/T})$ of the sample $\{\hat{\varrho}_{1/T}(k/T)\}_{k=2}^{T-1}$ exceeds a threshold (that converges to 5\% as the sample size increases) we impose uniform weights, i.e., 
\begin{align*}
w_\gamma(r) &= \mathbb{1}_{sd(\hat{\varrho}_{1}(r)) \le 0.05+\frac{1.5}{T}} W^{\ell}(r)+\mathbb{1}_{sd(\hat{\varrho}_{1}(r))>.05+\frac{1.5}{T}}, \quad \ell \in \{\text{max},\text{L}\}, 
\end{align*}
where $W^{\text{max}}(r)=$ {\small ${I_{\bar{\mathscr{S}}_T}(r)}/\max_r I_{\bar{\mathscr{S}}_T}(r)$} is taken for the test in \eqref{eq:Dtmax}, denoted by $\mb{D}^{\max}_{T,\gamma}$, and where the choice $W^{\text{L}}(r)=$ {\small${I^4_{\bar{\mathscr{S}}_T}(r)}/{\sum_r I^4_{\bar{\mathscr{S}}_T(r)}}$} is used for the test \eqref{eq:DtL}, denoted by $\mb{D}^{\max}_{T,\gamma}$. 
We emphasize that we merely push towards uniform weights if the criterion indicates that using a weight functional based on $\bar{\mathscr{S}}_T$ increases the type II error. This does not change the limiting distributional properties of the tests under the stated conditions. Note that if the increments are stationary and independent then \eqref{eqref:covin} is an equality, and that even if these time increments are stationary the process itself might not be. Thus, under certain alternatives full weight is given to $W^\ell(r)$.

\subsection{Simulation study} \label{sec:sim}
We summarize results from a simulation study applied to TDA shape descriptors here. Full details on the study are given in the online supplement. We compared $\mb{D}^{\max}_{T,\gamma}$ and $\mb{D}^{L}_{T,\gamma}$ with  other weight functions applied to $\bar{S}$. 
Specifically,  \eqref{eq:Dtmax} and \eqref{eq:DtL} were considered with $\tilde{w}(r)=\frac{\tilde{w}_2(r)}{\max_r \tilde{w}_2(r)}$ and $\tilde{w}(r)=\frac{\tilde{w}_2(r)}{\sum_r \tilde{w}_2(r)}$, respectively denoted by $\mb{D}^{\max}_{T,\tilde{w}}$ and $\mb{D}^{L}_{T,\tilde{w}}$. Further,  $\mb{D}^{\max}_{T,\text{glob}}$ and $\mb{D}^{L}_{T,\text{glob}}$ are the estimators with $
\ddot{w}^{\text{max}}(r) = \frac{V_T(r)}{\max_{r} V_{T}(r)}$ and $\ddot{w}^{\text{L}}(r_i) =${\small ${ V^L_{T,H_0}(r_i)/\sum_{i}V^L_{T,H_0}(r_i)}$}, respectively.
Finally,  
$\mb{D}^{L}_{T,\text{glob},\gamma}$, 
 has a weight function that is a combination of $\ddot{w}$ and the adaptive weight function, i.e., the weights are given by $w_{\text{glob},\gamma}^L(r)=$ {\small $
{w_\gamma(r_i)V^L_{T,H_0}(r_i)}/{
\sum_{i=1}^T w_\gamma(r_i) V^L_{T,H_0}(r_i)}$}.\\
The outcomes are consistent with the theory developed in Section \ref{sec:wip}.  We found that the data-adaptive versions of the class of max-type tests  %$\mb{D}^{\text{max}}_{T,{\gamma}}$, and $\mb{D}^{\text{L}}_{T,{\text{glob},\gamma}}$ 
and of the class of quadratic tests outperform their non-adaptive counterparts,  %Generally, these are %$\mb{D}^{\text{max}}_{T,{\gamma}}$, $\mb{D}^{\text{L}}_{T,{\gamma}}$, and $\mb{D}^{\text{L}}_{T,{\text{glob},\gamma}}$ 
and are more robust against various alternatives, including potential bimodal distributions with discontinuities. Overall,  $\mb{D}^{\text{L}}_{T,{\text{glob},\gamma}}$ and $\mb{D}^{\text{max}}_{T,{\gamma}}$ provide the best trade-off between power and size, and have good properties in small samples.

\section{Application in the context of topological data analysis (TDA)} \label{sec:5}

In this section, we discuss an application of our techniques to a stable shape descriptor from TDA. We start with a very concise review; for more detailed treatments see e.g.,~\cite{c14, o15, TDAbook}.

\subsection{A rapid review of TDA} \label{sec:revTDA}
 Computing persistent homology involves the construction of a sequence
 of simplicial complexes from a finite metric space
 $(X,\partial_X)$. A simplicial complex is a combinatorial space
 constructed by ``gluing together'' translates of $n$-simplices at
 their boundaries.  To be more precise, an $n$-simplex is the convex
 hull $\sum_{i=0}^n t_i v_i$, where $\{v_i\}$ are a set of points in
 $\rnum^d$ that are affinely independent, i.e., the vectors $\{v_i - v_0\}$
 are linearly independent and $\sum_{i=0}^n t_i = 1$.  A simplicial
 complex is a union of simplices such that the intersection of any
 pair of simplices is itself another simplex, i.e., the simplices meet
 only along their faces.  Associated to a simplicial complex are
 topological invariants, most notably {\em homology}.  The homology
 vector spaces of a simplicial complex $C$ are invariants $H_k(C)$ for
 $k \geq 0$.  The rank of the $k$th homology is a measurement of the
 number of $k$-dimensional holes in $C$. For example, for a circle,
 $H_1(\mathbb{S}^1)$ is a rank one vector space. For a torus,
 $H_1(\mathbb{T})$ has rank two and $H_2(\mathbb{T})$ has rank one,
 corresponding to the two one-dimensional loops and the
 two-dimensional hole in the middle, respectively.  %Homology is
 %computationally tractable; computing it is a problem in numerical
 %linear algebra.

For an arbitrary metric space $(X,\partial_X)$, the most widely used simplicial complex is the Vietoris-Rips complex $\VR_{\epsilon}(X)$ (see e.g.,~\cite[2.1.6]{TDAbook}). For a given scale parameter $\epsilon > 0$, $\VR_{\epsilon}(X)$ is defined to have vertices the points of $X$ and a $n$-simplex spanning points $\{x_0, x_1, \ldots, x_n\}$ if $\partial_X(x_i,x_j) \leq \epsilon, 0 \leq i,j \leq n.$

The formulation of the Vietoris-Rips complex depends on the choice of a feature scale $\epsilon$. The homology groups $H_k(-)$ of this complex, which roughly speaking count the $k$-dimensional holes, provide qualitative shape descriptors for the underlying data. However, the correct feature scale $\epsilon$ can be difficult to estimate, and moreover analysis at a single scale loses information about underlying objects with features of varying sizes.

The basic idea of persistent homology is then to combine information from all feature scales $\epsilon$ at once, by using that the Vietoris-Rips complex is {\em functorial} in the sense  that for $\epsilon < \epsilon'$, there is an inclusion map of simplicial complexes $\VR_{\epsilon}(X) \to \VR_{\epsilon^\prime}(X)$. Thus, letting $\epsilon$ vary over a range $a \leq \epsilon \le b$, we obtain a {\em filtered complex}. We will denote the \textit{Vietoris-Rips  filtration} by  $\text{Rips}(X):=\VR_{\epsilon \in \rnum_+}(X)$.  For finite metric spaces, there are only finitely many values of $\epsilon$ for which the Vietoris-Rips complex changes, and so we only need keep track of complexes before and after these critical points and the maps between them.  That is, we have a system
$\VR_{\epsilon_0}(X) \to \VR_{\epsilon_1}(X) \to \ldots \to \VR_{\epsilon_n}(X)$
for $\epsilon_0 < \epsilon_1 < \ldots < \epsilon_n$.  The $H_k(-)$ are themselves functorial: for each $k$ there is an associated {\em filtered vector space}
\[
H_k(\VR_{\epsilon_0}(X)) \to H_k(\VR_{\epsilon_1}(X)) \to \ldots \to H_k(\VR_{\epsilon_n}(X)).
\]
%\begin{figure}[t]
%\centering
%\includegraphics[scale=0.1]{}
%\caption{A surface evolving in time; time increases along the $x$-axis from left to right.}
%\label{fig0}
%\end{figure}
\begin{figure}[t]
\centering
\begin{subfigure}[t]{0.25\textwidth}
\centering
\includegraphics[height=1.35in]{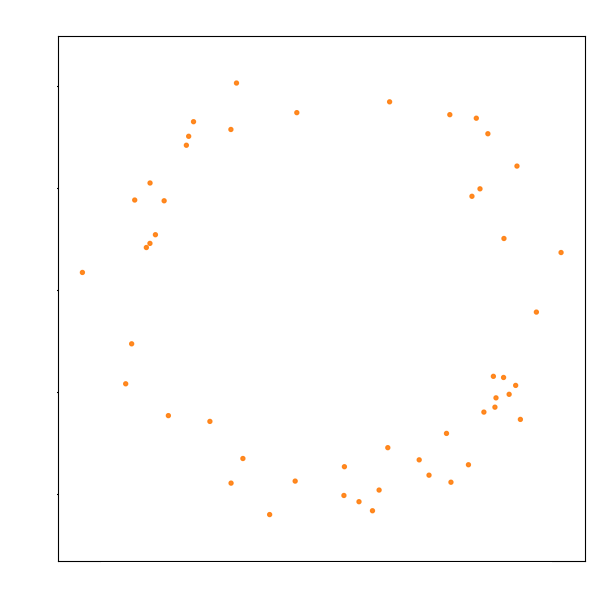}
\caption{$\epsilon = 0$}
\end{subfigure}%
\begin{subfigure}[t]{0.25\textwidth}
\centering
\includegraphics[height=1.35in]{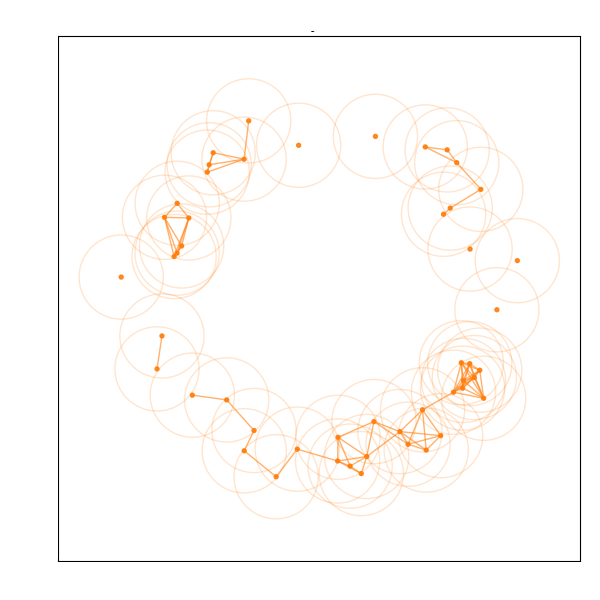}
\caption{$\epsilon = 0.25$}
\end{subfigure}%
\begin{subfigure}[t]{0.25\textwidth}
\centering
\includegraphics[height=1.35in]{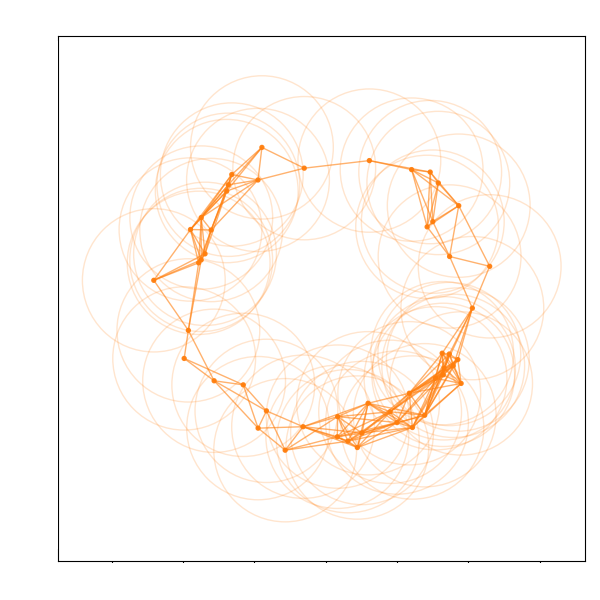}
\caption{$\epsilon = 0.5$}
\end{subfigure}%
\begin{subfigure}[t]{0.25\textwidth}
\centering
\includegraphics[height=1.35in]{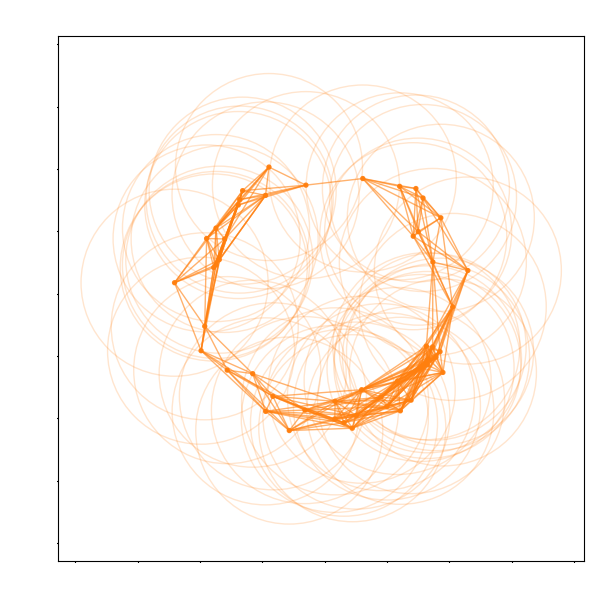}
\caption{$\epsilon = 0.75$}
\end{subfigure}%

\begin{subfigure}[t]{0.25\textwidth}
\centering
\includegraphics[height=1.35in]{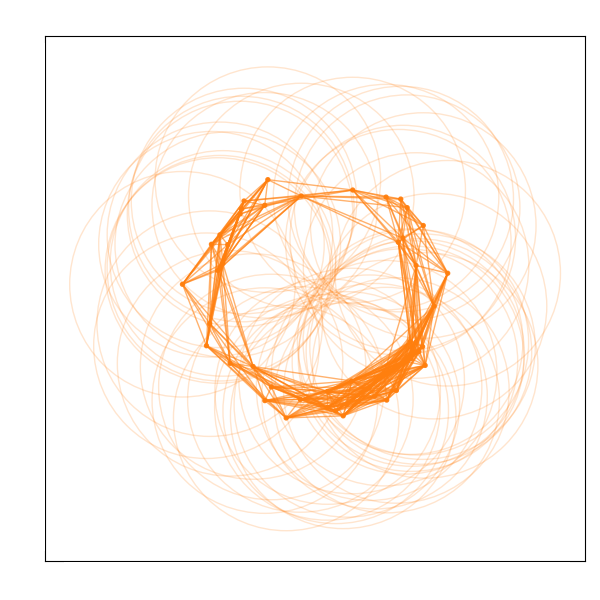}
\caption{$\epsilon = 1$}
\end{subfigure}%
%\begin{subfigure}[t]{0.2\textwidth}
%\centering
%\includegraphics[height=1in]{}
%\caption{$\epsilon = 1.25$}
%\end{subfigure}%
\begin{subfigure}[t]{0.25\textwidth}
\centering
\includegraphics[height=1.35in]{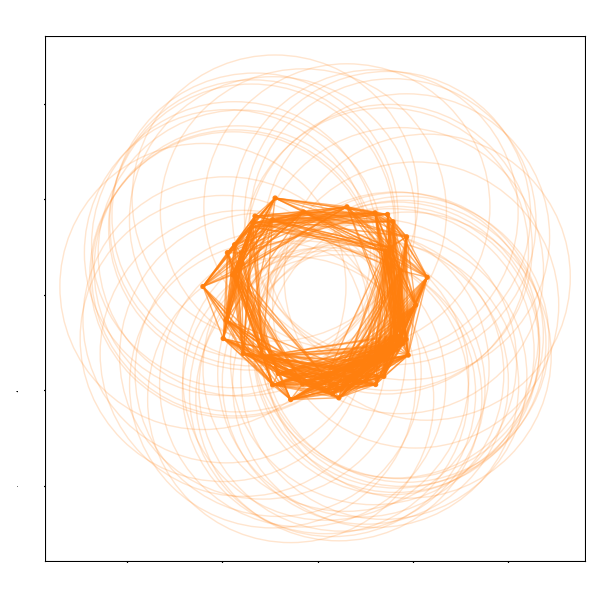}
\caption{$\epsilon = 1.5$}
\end{subfigure}%
\begin{subfigure}[t]{0.25\textwidth}
\centering
\includegraphics[height=1.35in]{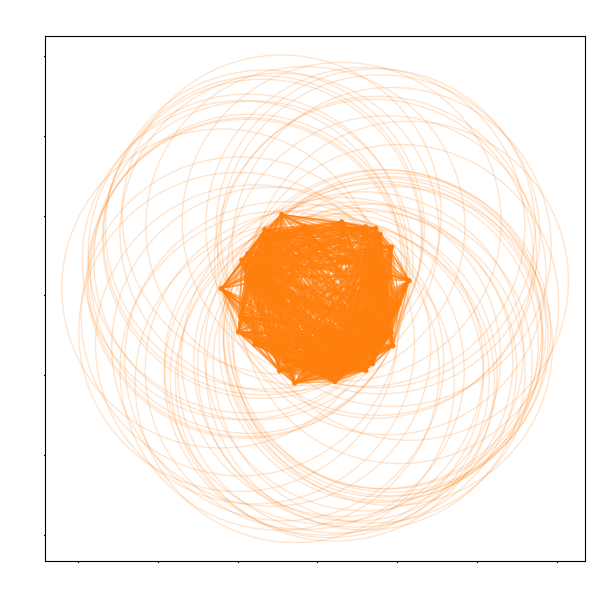}
\caption{$\epsilon = 2$}
\end{subfigure}%
\begin{subfigure}[t]{0.23\textwidth}
\centering
\includegraphics[height=1.35in]{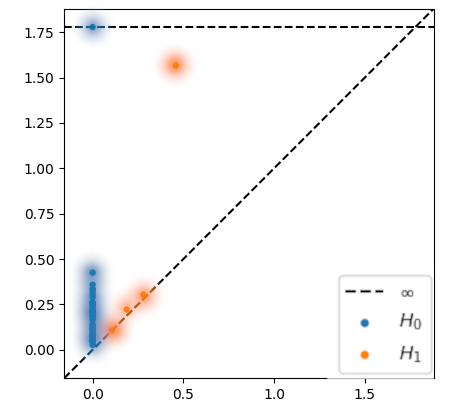}
\caption{Persistence.}
\end{subfigure}%
\caption{The Vietoris-Rips complexes and persistence diagram from a noisy circle.}
\label{fig1}
\end{figure}
\begin{figure}[t]
        \centering
 \includegraphics
 [width=1.01\textwidth]{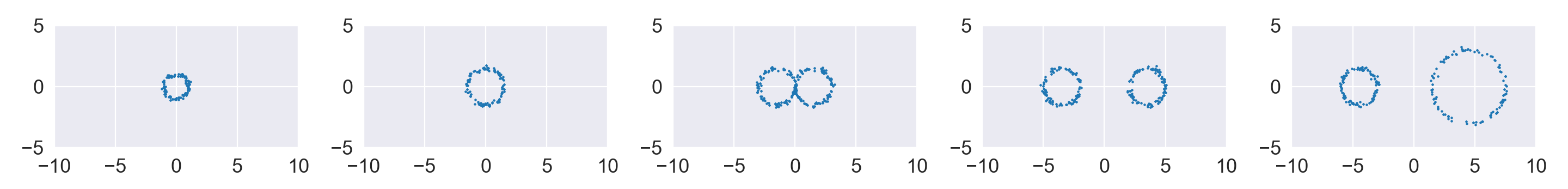}
    \caption*{Samples from slices at fixed times from an evolving surface as time increases.}
    \centering
\includegraphics[width=1.0\textwidth]{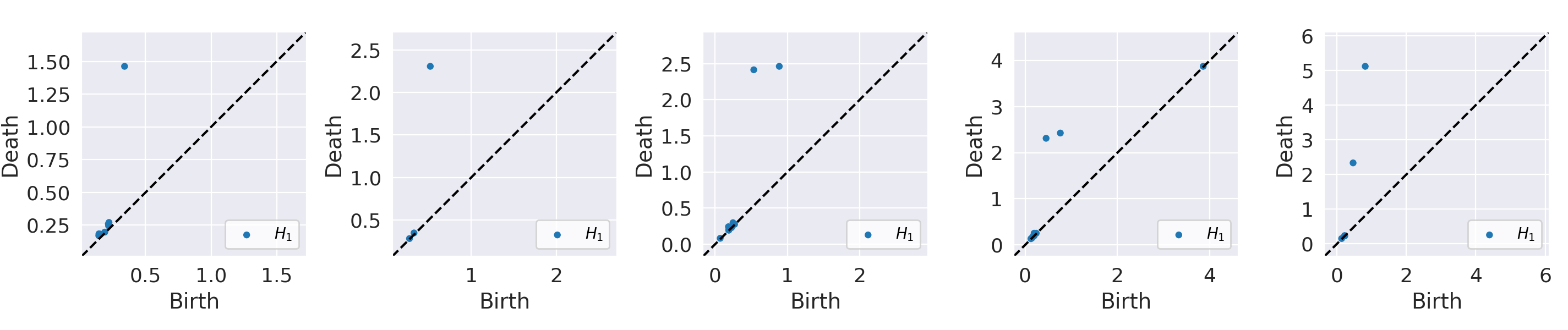}
    \caption{Persistence diagrams;  the points away from the line $x=y$ represent the circles.}
\label{fig2}
\end{figure}
A key theorem in the subject shows that these filtered vector spaces can be represented as multisets of intervals $(a,b)$, referred to as a {\em barcode} or {\em persistence diagram} and denoted by $\PH_k$.  The intervals in the barcode describe the {\em lifespan} of a generator, i.e., at what value of $\epsilon$ the generator arises and at what value of $\epsilon$ it disappears -- these values are the endpoints of the bars in the barcode.  The persistence diagram plots these pairs of endpoints as points in $\mathbb{R}^2$, where the $x$-coordinate is the birth time and the $y$-coordinate is the death (or vanishing) time; everything lies over the line $y=x$. For instance, consider the example of samples from circles as in Figure~\ref{fig1} (see also Figure \ref{fig2}).  When $\epsilon$ is smaller than the interpoint distances as in figure (a), the Vietoris-Rips complex consists only of the points themselves and there is topological signal in dimension $0$ but none in dimension $1$.  As $\epsilon$ increases, adjacent points are connected, forming clusters.  This gives rise to merging of the bars in dimension $0$.  This can be seen in figure (h) with the blue points close to the line $y=x$ resolving into one that is far from the line.  When $\epsilon$ is large enough, there exists a path around the circle, forming a generator for a vector space in dimension $1$.  This class exists until $\epsilon$ is so large that higher-dimensional simplices arise to fill in the circle; then it vanishes (and all of the points merge into one large cluster).  This is represented by the orange point far from the line $y=x$.
The completion of the set of barcodes is a Polish space~\cite{bgmp14, mmh11}, and
$B$ forms a metric space under various metrics, notably the {\em bottleneck distance}, $d_B$. Intuitively, $d_B$ measures distance between two barcodes as a minimum over all partial matchings. Given two barcodes $\beta$ and $\beta^\prime$, the bottleneck distance $d_B$ is given by 
\[
d_B(\beta,\beta^\prime) =\inf_{\varphi \in \Phi} \sup_{x \in \beta} \|x-\varphi(x)\|_{\infty}~,
\]
where $\Phi$ is the set of all bijections between $\beta$ and $\beta^\prime$. 
The {\em stability theorem}, due originally to Cohen-Steiner, Edelsbrunner, and Harer~\cite{ceh07}, is the cornerstone for all work on (statistical) properties of persistent homology. One version of the theorem states that for compact metric spaces $(X,\partial_X)$ and $(Y,\partial_Y)$, there is a strict bound
\[
d_{B}(\PH_k(\textrm{Rips}(X)), \PH_k(\textrm{Rips}(Y)) \leq d_{GH}(X, Y). \tageq \label{thm:stab}
\]
Thus, the space of barcodes provides a tractable metric space for encoding qualitative shape information about the point clouds. In our terminology, it means that persistent homology is a stable shape descriptor with respect to $d_B$, i.e., our results apply with $\sh(\cdot)=\PH_k(\textrm{Rips})(\cdot)$.  Hence, we can analyze the topological features via the corresponding barcode-valued process $\big(\PH_k(\text{Rips}(\tilde{\mb{X}}^n_{t,T}))\big)_{t \in [T]}$.  We remark that $B$ can also be equipped with a Wasserstein metric, which essentially changes the cost of the matching. Our results apply identically to this case.

\subsection{Developmental trajectories in single-cell sequencing data}\label{sec:realdata}
We now apply our methodology to test for
topological change in time series scRNA expression data. The data was obtained from the Waddington-OT work on cell
developmental trajectory inference~\citep{sstc19}.  The data
comprised expression profiles from roughly 250K cells in mouse embryo
fibroblasts undergoing differentiation.  The measurements were taken at 37 time points across 18 days of development.  Explicitly, at each
time point the data was comprised of a point cloud in $\rnum^{20000}$,
where each point was an scRNA expression vector, i.e., a vector of mRNA expression levels for a
particular cell, with each entry in the vector corresponding to a given gene.  Thus, we obtained a time series of 37 point clouds that were
subsets of $\rnum^{20000}$.  We then log-transformed the data (as is standard) to account for variation in absolute
levels of expression across different genes, and we then selected the
2000 most variable genes (using CellRank~\cite{cellrank}) for
analysis.  Figure \ref{fig:trajectory} indicates the shape of the data, projected into $\mathbb{R}^2$ and laid out as a force-directed graph.  The graph indicates that the underlying shape of the data and the cell type changes over time.
%; this is indicated by the colors %in the figure.

 \begin{figure}[t]
        \centering
\includegraphics[scale=0.35]{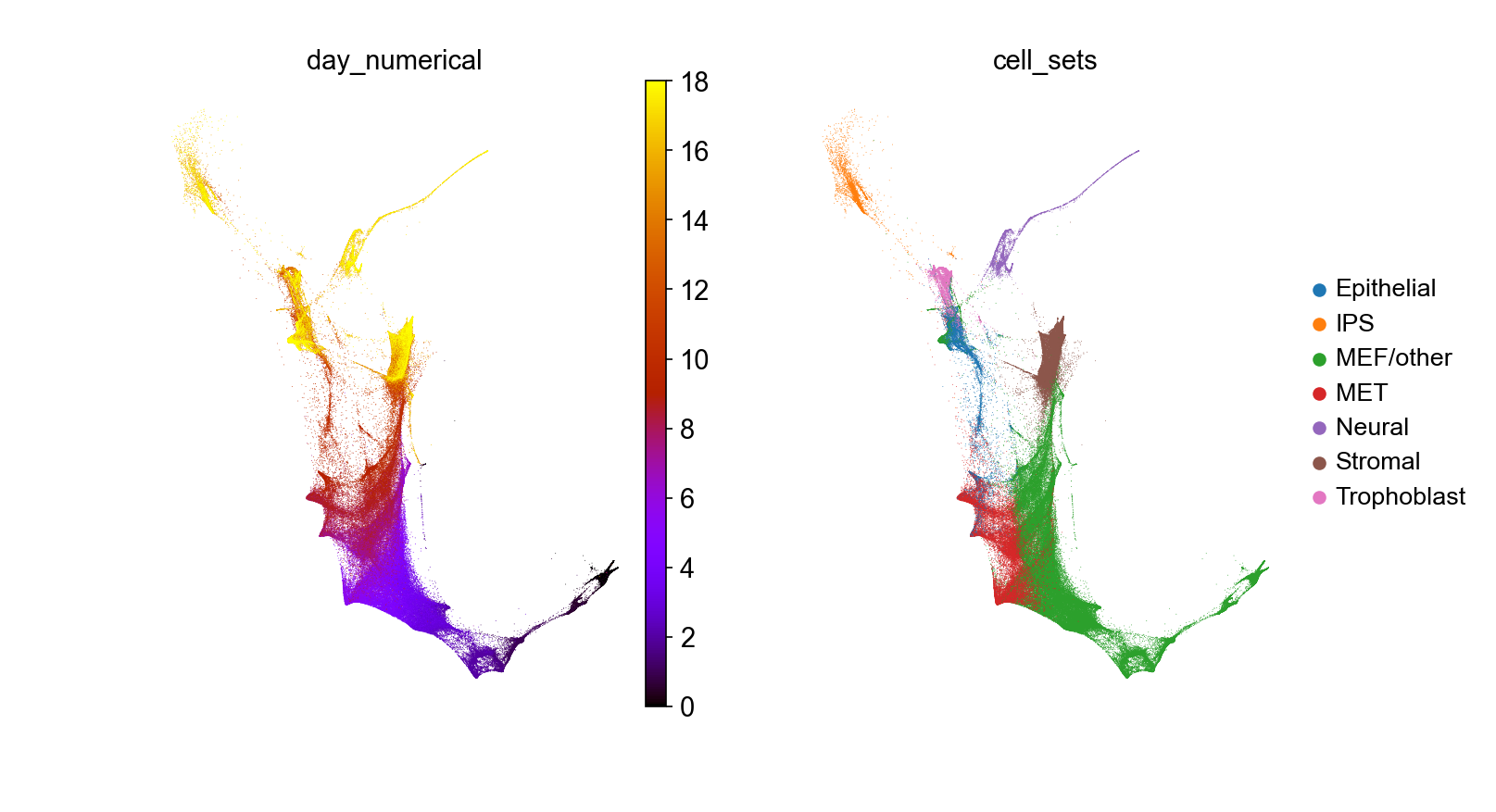}
    \caption{Mouse embryo fibroblast cell trajectories under differentation.}
\label{fig:trajectory}
\end{figure}

 We tested whether the nonstationarity in $H_0$ and $H_1$ is statistically significant using the adaptive test statistics. Despite the small sample size of $T=37$, we are able to reject the hypothesis of a stationary distribution at the 5\% level using $\mb{D}^{\text{max}}_{T,{\gamma}}$ for homology 1 and at the 10\% level for homology 0 ($p$-values of 0.04 and 0.08, resp.). The quadratic test $\mb{D}^{\text{L}}_{T,{\text{glob},\gamma}}$ rejects the null at the 10\% level for $H_0$, but fails to reject the null at the 10\% level for $H_1$ ($p$-values of 0.033 and 0.147, resp.).
 
\FloatBarrier
\section*{Acknowledgements}
 Both authors were partially supported by NSF grant DMS-2311338.
A.J. Blumberg was partially supported by ONR grant N00014-22-1-2679.  
The authors sincerely thank the Associate Editor and two anonymous referees for their constructive comments that helped to produce an improved version of the original paper.

\newpage

\begin{supplement}
\stitle{Supplement to ``A statistical framework for analyzing shape in a time series of random geometric objects''}
\sdescription{Appendix A provides a Monte Carlo study of the finite sample properties of the statistical tests introduced in \autoref{sec4}. Appendix B-E   provide the proofs of the
statements of the main paper as well as additional auxiliary results needed to complete the proofs.}
\end{supplement}
\appendix

\section{Simulations}\label{sec:simulations}

We illustrate the finite sample performance of the  self-normalized tests via a Monte Carlo study. The quantiles of equations~\eqref{eq:Dtmax} and~\eqref{eq:DtL} are obtained by simulating 200,000 times on a grid of Brownian Motion equal to the sample size $T$, reflecting the  grid resolution.  The Monte Carlo simulation consists of 500 repetitions in each case. In the results provided below we fixed $n=300$, which is quite conservative; most data sets will have much larger point clouds, and in these cases convergence will be faster. However, as noted above, larger choices of $n$ are computationally prohibitive with currently available algorithms. 
Furthermore, in computing the test statistics, values of the equidistant radius grid $\{r_i\}$ that had frequency zero were ignored. 
%The authors are currently working on a software package to enable a more efficient implementation. }

\begin{itemize}[leftmargin=*]
\item {\bf Model I:} In the first model we consider data arising as a sequence of random circles, where the spatial observations are generated from a uniform distribution. More specifically, 
\[
\tilde{\mb{X}}^n_{t,T} =\tau_t \Big\{ \big(\cos(\vartheta_m), \sin(\vartheta_m)\big)\Big\}_{m=1}^n
\]
where $\{\vartheta_m\} \sim \text{Unif}(0,2\pi)$ and $\tau_t=\frac{1}{2}$. 
\item {\bf Model II:} In the second model, we add dependence via an AR structure via $\tau_t$, that is, we let 
$\tau_t = 0.5 \tau_{t-1} + \epsilon_t, \epsilon_t \sim \mathcal{N}(\frac{1}{2},0.05\frac{1}{2}).
$
The radius thus changes according to an AR process with strength parameter with absolute modulus less than 1, ensuring stationarity of this process. 
\item {\bf Model III:}
In the third model, we let $
\tilde{\mb{X}}^n_{t,T} = \Big( \tilde{\mb{Y}}^n_{t,T} ,\tilde{\mb{Z}}^n_{t,T}  \Big)$ 
where 
\[
\tilde{\mb{Y}}^n_{t,T} =\Big\{ \big(\rho_{t,T}\cos(\vartheta_m), \rho_{t,T}\sin(\vartheta_m)\big)\Big\}_{m=1}^n
 \,,\, 
\tilde{\mb{Z}}^n_{t,T} = \Big\{ \big(\cos(\vartheta_\ell)+\rho_{t,T}, \sin(\vartheta_\ell) +\rho_{t,T}\big)\Big\}_{\ell=m+1}^{2n}
\]
with time-varying process $\{\rho_t\}$ given by 
$\rho_t = a(t/T) \rho_{t-1} +\epsilon_t$, $\epsilon_t \sim \mathcal{N}(\frac{1}{5},0.01\frac{1}{5})$ and $a(t/T) = \frac{1}{2}\cos(\frac{\pi}{2} \frac{t}{T})$. Thus, we have two circles that are dependent, where the radii and location change very gradually. This creates the behavior of holes that appear and disappear in random fashion across time. 
\item {\bf Model IV:} In this case we consider  $
\tilde{\mb{X}}^n_{t,T} = \Big( \tilde{\mb{Y}}^n_{t,T} ,\tilde{\mb{Z}}^n_{t,T}  \Big)$
where 
\[
\tilde{\mb{Y}}^n_{t,T} =\Big\{ \big(\rho_{1,T}+\cos(\vartheta_m), \rho_{1,T}+\sin(\vartheta_m)\big)\Big\}_{m=1}^n
\,,\, 
\tilde{\mb{Z}}^n_{t,T} = \Big\{ \big(\cos(\vartheta_\ell)+\rho_{t,T}, \sin(\vartheta_\ell) +\rho_{t,T}\big)\Big\}_{\ell=m+1}^{2n}
\]
with time-varying process $\{\rho_t\}$ is again a time-varying AR but now with
$a(t/T) =\frac{1}{2}\cos(2\pi \frac{t}{T})$ and $\{\epsilon_t\}$ as in model I. In this scenario, the effect is that we create two circles that collide at iteration 1 and then move away from one another in a specific way. This creates nonstationary behavior that affects higher-dimensional holes and specifically homology 1. The effect on homology 0 is that a certain set of connected components might overlap for a brief period, creating minor nonstationary behavior, but will otherwise not be distinguishable from noise. We thus expect that this is difficult to detect in small sample sizes. 
\item {\bf Model V:} Finally, we consider a break structure by taking the model as in Model I but where not just the grid is random but where now $
\tau_t \sim \mathcal{N}(\frac{1}{2},0.05\frac{1}{2})$ for $t \le \frac{1}{3}T$ and for $t \ge \frac{1}{3}T$, and the mean of $\tau_t$ is decreased to $\frac{1}{4}$.
\end{itemize}

The results are given in \autoref{tab:sim1} for homology 1 and for homology 0 in \autoref{tab:sim0}. 
\linespread{1.0}{
\begin{table*}[b]
\begin{center}
\caption{\label{tab:sim1} \it Empirical rejection probabilities of the test \eqref{eq:Dtmax}  for the hypotheses in \eqref{eq:hyp}
 under the null hypothesis for homology 1.}
\label{Sim_size}
\begin{tabular}{c|c|cccccc|cccccccc}
\hline
& &  \multicolumn{2}{c} {$\mathbb{D}^{max}_{T,\text{glob}}$} & \multicolumn{2}{c} {$\mathbb{D}^{max}_{T,\gamma}$} & \multicolumn{2}{c} {$\mathbb{D}^{max}_{T,\tilde{w}}$}& \multicolumn{2}{c} {$\mathbb{D}^{L}_{T,\text{glob}}$} &\multicolumn{2}{c} {$\mathbb{D}^{L}_{T,\tilde{w}}$} & \multicolumn{2}{c}  {$\mathbb{D}^{L}_{T,\text{glob},\gamma}$} & \multicolumn{2}{c} {$\mathbb{D}^{L}_{T,\gamma}$}\\
\cmidrule(lr){3-4} \cmidrule(lr){5-6} \cmidrule(lr){7-8}  \cmidrule(lr){9-10} \cmidrule(lr){11-12} \cmidrule(lr){13-14} \cmidrule(lr){15-16}
 &T & 10\% & 5\% & 10\% & 5\%  & 10\% & 5\% & 
 10\% & 5\%  & 10\% & 5\%& 10\% & 5\% & 10\% & 5\% \\
 \hline
 I&50&
 11.0 & 3.8 & 7.4 & 2.4 & 10.2 & 4.6 & 8.6 & 4.2 & 9.2 & 4.0 & 9.0 & 4.2 & 7.8 & 2.8 \\
& 100& 9.6 & 3.6 & 9.2 & 2.4 & 10.4 & 4.4 & 8.8 & 2.8 & 7.4 & 2.2 & 6.8 & 3.2 & 6.2 & 1.2 \\ 
 & 200&  13.6 & 6.0 & 10.6 & 4.4 & 11.6 & 6.2 & 10.8 & 4.6 & 10.8 & 5.6 & 12.0 & 6.2 & 10.0 & 4.0 \\ 
 & 300& 10.2 & 5.6 & 10.4 & 4.4 & 10.4 & 4.6 & 8.0 & 2.8 & 6.8 & 3.4 & 8.4 & 4.6 & 8.0 & 2.6 \\ \hline 
II& 50& 10.2 & 4.6 & 10.6 & 6.8 & 10.6 & 5.2 & 8.0 & 4.6 & 9.2 & 4.6 & 10.0 & 3.8 & 8.0 & 2.8 \\  
& 100 & 9.8 & 4.2 & 10.6 & 4.8 & 10.2 & 4.2 & 9.2 & 4.0 & 8.2 & 3.8 & 8.6 & 3.4 & 7.4 & 3.0 \\
& 200& 14.0 & 7.8 & 11.4 & 6.8 & 13.6 & 7.0 & 11.2 & 6.0 & 11.0 & 4.8 & 10.8 & 6.2 & 9.8 & 4.4 \\
& 300 &10.4 & 5.2 & 9.8 & 6.0 & 11.0 & 5.4 & 11.4 & 6.0 & 11.8 & 5.0 & 12.2 & 6.2 & 10.8 & 6.0 
\\ \hline
III & 50  & 100.0 & 99.6 & 100.0 & 100.0 & 100.0 & 99.0 & 100.0 & 100.0 & 100.0 & 100.0 & 100.0 & 100.0 & 100.0 & 100.0 \\
&100 & 100.0 & 100.0 & 100.0 & 100.0 & 100.0 & 100.0 & 100.0 & 100.0 & 100.0 & 100.0 & 100.0 & 100.0 & 100.0 & 100.0 \\ 
&200&  100.0 & 100.0 & 100.0 & 100.0 & 100.0 & 100.0 & 100.0 & 100.0 & 100.0 & 100.0 & 100.0 & 100.0 & 100.0 & 100.0 \\
& 300 & 100.0 & 100.0 & 100.0 & 100.0 & 100.0 & 100.0 & 100.0 & 100.0 & 100.0 & 100.0 & 100.0 & 100.0 & 100.0 & 100.0 \\  \hline
IV & 50 & 70.2 & 46.8 & 100.0 & 99.8 & 55.2 & 12.8 & 98.6 & 67.8 & 98.4 & 75.2 & 98.6 & 67.8 & 98.2 & 48.0 \\
&100&   98.0 & 75.8 & 100.0 & 100.0 & 96.4 & 35.2 & 100.0 & 100.0 & 100.0 & 99.8 & 100.0 & 100.0 & 100.0 & 99.6 \\ 
&200 & 100.0 & 99.0 & 100.0 & 100.0 & 100.0 & 85.2 & 100.0 & 100.0 & 100.0 & 100.0 & 100.0 & 100.0 & 100.0 & 100.0 \\ 
& 300  & 100.0 & 100.0 & 100.0 & 100.0 & 100.0 & 94.6 & 100.0 & 100.0 & 100.0 & 100.0 & 100.0 & 100.0 & 100.0 & 100.0 \\ 
\hline
V & 50 & 0.4 & 0.0 & 54.6 & 34.0 & 10.4 & 3.2 & 82.2 & 23.6 & 99.8 & 92.4 & 97.6 & 88.2 & 96.6 & 86.4 \\
& 100 & 0.0 & 0.0 & 84.0 & 65.0 & 11.2 & 2.0 & 78.6 & 5.2 & 97.2 & 40.2 & 99.6 & 96.8 & 99.2 & 96.2 \\ 
& 200 &  0.0 & 0.0 & 98.2 & 88.0 & 30.8 & 3.0 & 99.8 & 22.2 & 100.0 & 65.2 & 100.0 & 100.0 & 100.0 & 100.0 \\ 
& 300  & 0.0 & 0.0 & 100.0 & 96.8 & 40.8 & 4.2 & 96.4 & 4.0 & 100.0 & 25.4 & 100.0 & 100.0 & 100.0 & 100.0 \\ 
\end{tabular}
\end{center}
\end{table*}}
 From models I and II, it can be observed that all tests have decent size, though the quadratic tests are a bit undersized in certain cases. Note that the grid of the quantiles are themselves sample size-dependent and the numerical fluctuation across rows is therefore natural for the available sample sizes. The quadratic tests $\mb{D}^L_{T,\text{glob}}$ and $\mb{D}^L_{T,\gamma}$ appear to have tendency to undersize, especially for homology 0. This is not surprising since persistent $H_0$ can be very sensitive to noise; an additional point far from the remaining points can add a long bar in the barcode. $\mb{D}^L_{T,\text{glob},\gamma}$  appears overall best from the quadratic tests. We note that the undersizing of the quadratic tests will be negligible by simply taking a grid resolution of $T+100$ for the computation of the limiting quantiles.

We now look at the power properties.  For homology 1, we see very fast convergence to full power for model III and IV across the test statistics. However, we observe a clearer benefit in the break scenario of the adaptive weighting test $\mb{D}^{\text{max}}_{T,{\gamma}}$  compared to the other max-type tests. Visual inspection indicates that the distance matrices of  persistent homology 1 have a mixed distribution, which technically violates our assumptions.  Consequently, the range estimator $V(r)$ seems to overestimate, leading to a loss of power of $\mb{D}^{\text{max}}_{T,{glob}}$ and  $\mb{D}^{\text{max}}_{T,{w_1}}$. The quadratic data-adaptive tests $\mb{D}^{\text{L}}_{T,{\gamma}}$ and $\mb{D}^{\text{L}}_{T,{\text{glob},\gamma}}$ have fast and steady convergence to full power. On the other hand, the two non-adaptive quadratic versions $\mb{D}^{\text{L}}_{T,{glob}}$ and $\mb{D}^{\text{L}}_{T,{w_1}}$ suffer from loss of power as $T$ increases, indicating lack of robustness to discontinuities of the non-adaptive versions. Similar behavior is visible for homology 0 for this model. Furthermore, for homology 0, the nonstationary behavior present in model III is well-captured. For model IV, we again see superiority of the data-adaptive max statistic compared to the non-adaptive versions. Note that the nonstationary behavior in model IV for homology 0 is a type of nonstationarity that is rather difficult to detect; the nonstationarity is only detectable around the left extreme $r \to 0$, meaning that it is easily missed by any type of non-adaptive weighing scheme that suppresses the tails. Indeed, observe that for the max-type tests only the data-adaptive scheme is capable of detecting nonstationarity in homology 0.

\linespread{1.0}{
\begin{table*}[t]
\begin{center}
\caption{\label{tab:sim0} \it Empirical rejection probabilities of the test \eqref{eq:DtL}  for the hypotheses in \eqref{eq:hyp}
 under the null hypothesis for homology 0.}
\label{Sim_sizehom0}

\begin{tabular}{c|c|cccccc|cccccccc}
\hline
& &  \multicolumn{2}{c} {$\mathbb{D}^{max}_{T,\text{glob}}$} & \multicolumn{2}{c} {$\mathbb{D}^{max}_{T,\gamma}$} & \multicolumn{2}{c} {$\mathbb{D}^{max}_{T,\tilde{w}}$}& \multicolumn{2}{c} {$\mathbb{D}^{L}_{T,\text{glob}}$} &\multicolumn{2}{c} {$\mathbb{D}^{L}_{T,\tilde{w}}$} & \multicolumn{2}{c}  {$\mathbb{D}^{L}_{T,\text{glob},\gamma}$} & \multicolumn{2}{c} {$\mathbb{D}^{L}_{T,\gamma}$}\\
\cmidrule(lr){3-4} \cmidrule(lr){5-6} \cmidrule(lr){7-8}  \cmidrule(lr){9-10} \cmidrule(lr){11-12} \cmidrule(lr){13-14} \cmidrule(lr){15-16}
 &T & 10\% & 5\% & 10\% & 5\%  & 10\% & 5\% & 
 10\% & 5\%  & 10\% & 5\%& 10\% & 5\% & 10\% & 5\% \\
 \hline
 I & 50&
 9.4 & 5.8 & 9.8 & 4.2 & 10.2 & 5.2 & 7.4 & 3.6 & 6.8 & 2.8 & 6.4 & 3.6 & 5.8 & 3.2 \\ 
 &100 & 8.6 & 3.4 & 8.0 & 4.8 & 10.6 & 3.0 & 5.6 & 2.8 & 4.8 & 2.4 & 6.8 & 2.8 & 6.4 & 2.4 \\ 
  & 200 &  11.8 & 5.8 & 13.0 & 6.8 & 12.8 & 6.0 & 6.8 & 2.6 & 6.2 & 2.2 & 7.2 & 3.0 & 6.6 & 2.8 \\
 & 300&  9.2 & 3.4 & 9.0 & 4.4 & 10.2 & 4.8 & 6.0 & 2.2 & 6.4 & 2.0 & 8.4 & 3.2 & 8.0 & 2.6 \\ \hline
II& 50 & 9.4 & 4.8 & 9.6 & 3.2 & 9.2 & 5.0 & 8.4 & 4.6 & 7.6 & 4.0 & 7.4 & 4.2 & 7.2 & 4.0 \\ 
&100&  12.4 & 5.6 & 11.0 & 6.4 & 12.4 & 7.2 & 8.2 & 2.6 & 7.8 & 2.8 & 9.2 & 3.6 & 8.4 &3.6\\ &200 &10.6 & 3.6 & 8.0 & 2.4 & 10.6 & 3.2 & 7.6 & 3.0 & 7.0 & 3.2 & 7.6 & 3.8 & 6.8 & 2.2 \\ 
&300 & 9.0 & 4.6 & 9.2 & 3.6 & 9.6 & 3.8 & 6.8 & 3.0 & 6.6 & 3.2 & 7.2 & 4.0 & 7.0 & 4.2 \\ \hline
III& 50& 100.0 & 100.0 & 100.0 & 100.0 & 100.0 & 100.0 & 100.0 & 6.4 & 100.0 & 100.0 & 100.0 & 6.4 & 100.0 & 96.0\\
&100  & 100.0 & 100.0 & 100.0 & 100.0 & 100.0 & 100.0 & 100.0 & 100.0 & 100.0 & 100.0 & 100.0 & 100.0 & 100.0 & 100.0 \\ 
& 200 & 100.0 & 100.0 & 100.0 & 100.0 & 100.0 & 100.0 & 100.0 & 100.0 & 100.0 & 100.0 & 100.0 & 100.0 & 100.0 & 100.0  \\
& 300 & 100.0 & 100.0 & 100.0 & 100.0 & 100.0 & 100.0 & 100.0 & 100.0 & 100.0 & 100.0 & 100.0 & 100.0 & 100.0 & 100.0 \\ \hline
IV & 50 &1.2 & 0.2 & 23.2 & 12.6 & 1.0 & 0.2 & 32.2 & 18.0 & 36.6 & 21.6 & 31.6 & 16.6 & 14.4 & 5.2 \\
& 100 &0.6 & 0.0 & 48.8 & 31.8 & 0.8 & 0.0 & 51.0 & 31.2 & 53.0 & 33.0 & 50.0 & 30.6 & 23.0 & 8.6 \\ 
& 200  & 0.2 & 0.0 & 76.4 & 69.2 & 0.8 & 0.0 & 81.2 & 58.8 & 82.0 & 59.6 & 80.6 & 58.8 & 48.0 & 19.0 \\ 
&300  & 1.0 & 0.0 & 88.2 & 82.6 & 1.4 & 0.2 & 89.4 & 76.0 & 89.8 & 75.8 & 89.4 & 75.8 & 62.4 & 24.6 \\ 
\hline
V & 50& 9.4 & 5.4 & 58.0 & 39.2 & 24.2 & 4.0 & 99.4 & 94.8 & 99.0 & 95.4 & 98.8 & 92.8 & 96.6 & 84.0 \\ 
& 100 &  3.2 & 1.0 & 88.4 & 80.2 & 33.8 & 5.4 & 99.8 & 97.8 & 99.8 & 97.8 & 99.8 & 98.0 & 99.2 & 91.0 \\ 
&200 &0.0 & 0.0 & 98.8 & 93.8 & 50.8 & 9.6 & 100.0 & 100.0 & 100.0 & 100.0 & 100.0 & 100.0 & 100.0 & 100.0 \\ 
& 300 & 0.0 & 0.0 & 99.8 & 98.4 & 61.2 & 12.2 & 100.0 & 100.0 & 100.0 & 100.0 & 100.0 & 100.0 & 100.0 & 100.0 \\ 
\end{tabular}
\end{center}
\end{table*}}

\FloatBarrier

\section{Proof of Theorem \ref{thm:limfixr}}

\begin{proof}[Proof of \autoref{thm:limfixr}]
The main line of argument is given here and  proofs of the underlying auxiliary statements are deferred to \autoref{sec:appc}. We use an argument based on iterated limits to determine the limiting law \citep[see e.g.,][Theorem 4.2]{bil68}. More specifically, let
\begin{align*}
M_{m,T}(u,r):=2 \frac{1}{T}
\sum_{t=1}^{\flo{uT}}\sum_{j=0}^{m-1} P_{t}\Big(Y_{m,T,t+j}\big(\frac{t+j}{T},u, r\big) \Big)%+\frac{1}{T}\sum_{s=1}^{\flo{uT}}\sum_{j=0}^{m-1} P_{s}\Big(Y_{m,s+j}\big(\frac{s+j}{T},u,r\big) \Big)
\end{align*}
with
\[Y_{m,T,t}(u,v,r) =\E\Big[\frac{1}{T}\sum_{s=1}^{\flo{vT}} \E_{\tilde{\mb{X}}^\prime_{m,0}(\frac{s}{T})}\Big[\hti(\tilde{\mb{X}}_{m,t}(u),\tilde{\mb{X}}^{\prime}_{m,0}(\frac{s}{T}), r)\Big]\Big\vert\G_{t,m}\Big]. \tageq \label{eq:Ymt}
\]
A Hoeffding-type of decomposition of the process then yields the following.
\begin{thm} \label{thm:limsupm}
Under the conditions of \autoref{thm:limfixr}, for each $\epsilon>0$, and $r \in [0,\mathscr{R}]$, 
\begin{align*}
\lim_{m \to \infty} \limsup_{T \to \infty}\pr\Big(T^{1/2} \sup_{u} |\mathscr{S}_T(u,r) -\E\mathscr{S}_T(u,r)-M_{m,T}(u,r)|\ge \epsilon \Big) =0.
\end{align*}
\end{thm}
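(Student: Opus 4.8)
The plan is to compare $\mathscr{S}_T(u,r)$ with its mean-zero Hájek approximation $M_{m,T}(u,r)$ (note $M_{m,T}$ is a sum of $P_t$-martingale differences, hence centred, so the comparison is effectively with $\mathscr{S}_T(u,r)-\E\mathscr{S}_T(u,r)$) through a chain of four reductions, each controlled uniformly in $u\in[0,1]$ and in the iterated-limit sense $\lim_{m\to\infty}\limsup_{T\to\infty}$. Writing $\mathscr{S}_T(u,r;W)$ for the partial sum \eqref{eq:partsum} built from an arbitrary array $W$, the basic device is that $h$ is $\{0,1\}$-valued, so for arrays $W,W'$ with $\varrho_t:=d_{\mathscr{B}}(\PH_k(W_t),\PH_k(W'_t))$ one has, by the stability theorem \eqref{thm:stab}, \autoref{lem:dghds} and the triangle inequality,
\[
\big|h(W_t,W_s,r)-h(W'_t,W'_s,r)\big|\le \mathrm{1}\big\{\,\big|d_{\mathscr{B}}(\PH_k(W_t),\PH_k(W_s))-r\big|\le \varrho_t+\varrho_s\,\big\};
\]
thus the number of ``sign flips'' between two arrays is governed by the anti-concentration hypothesis \autoref{as:dep}(i), and this drives the first two steps.

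First I would pass from the point clouds $\tilde{\mb X}^{n(T)}_t$ to $\tilde{\mb X}_{t,T}=\mb X_t(M)$: by \autoref{as:mincov} and \eqref{eq:cloudapprox}, $\sup_t d_{GH}(\tilde{\mb X}^{n(T)}_t,\mb X_t(M))=o_p(\alpha_T)$, which via the stability bound makes $\varrho_t$ small; feeding this into the flip bound and averaging the at most $T^2$ summands shows, using \autoref{as:dep}(i) and $\alpha_T\to0$, that $T^{1/2}\sup_u|\mathscr{S}_T(u,r;\tilde{\mb X}^{n(T)})-\mathscr{S}_T(u,r;\tilde{\mb X}_{\cdot,T})|=o_p(1)$. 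Next I would pass to the stationary auxiliary array $\tilde{\mb X}_t(t/T)$, where $d_{\mathscr{B}}(\PH_k(\tilde{\mb X}_{t,T}),\PH_k(\tilde{\mb X}_t(t/T)))=O_p(T^{-1})$ uniformly in $t$ by \autoref{prop2}; the same flip-counting argument, now applied to the mean-centred statistics so that the deterministic means cancel up to the flip probability, handles this error. Third, I would truncate to the $m$-dependent version $\tilde{\mb X}_{m,t}(t/T)$: writing the centred $V$-statistic as a telescoping sum of $(\G_t)$-martingale differences and invoking a Móricz-type maximal inequality \cite{Mor76}, the $L^2$-norm of $T^{1/2}\sup_u$ of this increment is bounded by a constant times a tail $\sum_{j\ge m}\sup_{\ell\ge j}\delta^{\tilde{\mb X}}_\ell$ (after using \autoref{sumbound} to pass from the projection measures $\nu^{\tilde{\mb X}}$ to the $\delta^{\tilde{\mb X}}_\ell$), which vanishes as $m\to\infty$ by \autoref{as:dep}(ii).

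Finally, for fixed $m$ I would carry out the Hoeffding/Hájek decomposition of the centred $m$-dependent $V$-statistic $\frac1{T^2}\sum_{s,t=1}^{\flo{uT}}\hti(\tilde{\mb X}_{m,t}(\tfrac tT),\tilde{\mb X}_{m,s}(\tfrac sT),r)$ into its linear projection plus a completely degenerate remainder. By symmetry of $h$, the linear part splits into a $t$-piece and an $s$-piece; inserting the conditional-expectation representation and using that $\tilde{\mb X}_{m,t}$ is $\G_{t,m}$-measurable identifies each piece with $\frac1T\sum_{t=1}^{\flo{uT}}Y_{m,T,t}(\tfrac tT,u,r)$ for $Y$ as in \eqref{eq:Ymt}, and the martingale expansion $Y_{m,T,t}-\E Y_{m,T,t}=\sum_{j=0}^{m-1}P_{t-j}Y_{m,T,t}$ followed by the change of variable $t\mapsto t-j$ turns this into precisely $M_{m,T}(u,r)$, up to $O_p(m/T)$ boundary terms. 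For $m$-dependent data the degenerate remainder has variance of order $m^2/T^2$ uniformly in $u$; a maximal inequality over the $T$ values of $\flo{uT}$ then gives $T^{1/2}\sup_u|\text{remainder}|=O_p(m\,T^{-1/2})$, which $\to0$ as $T\to\infty$ for each fixed $m$. Assembling the four steps by the triangle inequality and letting first $T\to\infty$ and then $m\to\infty$ gives the claim.

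The main obstacle will be controlling \emph{every} error term \emph{uniformly in $u$} at the sharp $T^{-1/2}$ scale — this is what forces the hypotheses to be phrased through the dependence measures $\nu^W$ and $\delta^W_m$ of \eqref{thenus}--\eqref{thedeltas} and their link \autoref{sumbound}, and it makes the maximal inequalities for partial sums of dependent arrays unavoidable. Within this, the genuinely delicate points are (i) the degenerate part of the Hájek decomposition, whose uniform-in-$u$ control combines a second-moment bound for degenerate $m$-dependent $U$-statistics with a chaining/maximal argument, and (ii) the point-cloud and nonstationarity errors, where the $\{0,1\}$-valued kernel could in principle flip on many pairs; it is exactly here that \autoref{as:dep}(i) (and, in the stronger \autoref{thm:2parproccon}, the Lipschitz regularity \eqref{lipscdis}) enters to bound the flip probabilities.
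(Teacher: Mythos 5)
Your proposal follows essentially the same route as the paper: the identical four-stage reduction (point-cloud error via \autoref{as:mincov}, local-stationarity error, $m$-dependent truncation, then the Hoeffding/H\'ajek projection with the index shift $t\mapsto t+j$ and M\'oricz-type maximal control of the degenerate part), invoking the same lemmas in the same roles. The one place where your description, taken literally, would not close is the claim that the point-cloud and nonstationarity errors follow by ``averaging the at most $T^2$ summands'': that computation yields a bound of order $T^{1/2}\times(\text{flip probability})$, and \autoref{as:dep}(i) only gives logarithmic decay of the flip probability, so this diverges. In the paper the factor $T^{1/2}$ is absorbed by orthogonality of the martingale projections $P_{t-j}$ across $t$ (the maximal inequality of \autoref{lem:errorcontrol}), after which the residual bound is $\sum_j \sup_t\min\big(\nu_{t,t-k}(j),\,\text{flip probability}\big)$, which vanishes by dominated convergence against the summable $\nu$-sequence from \autoref{as:dep}(ii) — your closing paragraph shows you understand this is the real mechanism, but it should replace, not supplement, the averaging step.
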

Secondly, using a martingale functional central limit theorem, we show that
\[\big\{T^{1/2}M_{m,T}(u,r)\big\}_{ u \in [0,1]}\,\underset{T \to \infty}{\medsquiggly} \,
\big\{\mathbb{G}_m(u,r)\big\}_{ u \in [0,1]} \, \underset{m \to \infty}{\medsquiggly} \,
\big\{\mathbb{G}(u,r)\big\}_{ u \in [0,1]}. \]
Let $D_{m,T,t,j}=P_{t}\big(Y_{m,T,t+j}(\frac{t+j}{T},u,r) \big)$. Then it follows in the same way as in the proof of \autoref{lem:shift}, that for fixed $m$, and for each $u \in [0,1]$, and  $\epsilon>0$, the Lindeberg condition holds;
\begin{align*}
&\frac{1}{T}\sum_{t=1}^{\flo{uT}}\E\Big[ \bv \sum_{j=0}^{m-1} D_{m,T,t,j} \bv^2 \mathrm{1}_{\{| \sum_{j=0}^{m-1} D_{m,T,t,j}|>\sqrt{T}\epsilon\}}\Big]  
%\\& \le  \frac{1}{T}\sum_{t=1}^{\flo{uT}}\E\Big[ m^2 \max_{j \le m-1} |D_{t,j}|^2 \mathrm{1}_{\{ \max_{1\le j \le m-1}| D_{t,j}|>\sqrt{T}\frac{\epsilon}{m}\}}\Big]  
 \to 0 \quad  T \to \infty.
\end{align*}

The following result on the limit of the  conditional variance and an application of the Cram{\'e}r-Wold device completes the proof. 
\begin{lemma}\label{lem:convar}
Let $k=\flo{uT}$. Then, under the conditions of \autoref{thm:limfixr}
\begin{align*}
&\lim_{m\to \infty}\lim_{T\to \infty}\frac{1}{T}\sum_{t=1}^{k}\E\Big[\bv\sum_{j=0}^{m-1} P_{t}\Big(Y_{m,T,t+j}\big(\frac{t+j}{T},\frac{k}{T},r\big)\Big)\bv^2 \bv \G_{t-1}\Big]
\\&\overset{p}{\to}  \lim_{m \to \infty}
\int_0^u\sum_{\ell \in \znum} \mathrm{Cov}\big(g_m(\tilde{\mb{X}}_{m,0}(\eta),u,r),g_m(\tilde{\mb{X}}_{m,\ell}(\eta),u,r)\big) d\eta 
\\&\overset{p}{\to} \int_0^u\sum_{\ell \in \znum} \mathrm{Cov}\big(g(\tilde{\mb{X}}_{0}(\eta),u,r),g(\tilde{\mb{X}}_{\ell}(\eta),u,r)\big) d\eta,
\end{align*}
where 
\begin{align*}
    g_m(\tilde{\mb{X}}_{m,j}(\eta),u,r)&=\int_0^u \E\big[\E_{\tilde{\mb{X}}^\prime_{0}(v)}[\hti(\tilde{\mb{X}}_{m,j}(\eta), \tilde{\mb{X}}^\prime_{m,0}(v),r)]\bv \G_{j,m}\Big]dv
    \\ 
    g(\tilde{\mb{X}}_{j}(\eta),u,r)&= \int_0^u \E_{\tilde{\mb{X}}^\prime_{0}(v)}[\hti(\tilde{\mb{X}}_{j}(\eta), \tilde{\mb{X}}^\prime_{0}(v),r)]dv.
\end{align*}
\end{lemma}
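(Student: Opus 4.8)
The plan is to prove \autoref{lem:convar} in three stages, matching the three expressions in the display: a reduction of the conditional variance to stationary summands built from the auxiliary process $(\tilde{\mb X}_t(\cdot))$, evaluation of that limit for fixed $m$ by a local ergodic theorem, and finally the passage $m\to\infty$. For Stage 1, write $D_{m,T,t,j}=P_t\big(Y_{m,T,t+j}(\tfrac{t+j}{T},\tfrac kT,r)\big)$; since $Y_{m,T,t+j}$ is measurable with respect to an $m$-block of innovations ending at $t+j$, only $0\le j\le m-1$ contributes and $\sum_j D_{m,T,t,j}$ is a $(\G_t)$-martingale difference. I would show that, up to an error that is $o_p(1)$ after the operation $\tfrac1T\sum_{t=1}^k\E[\,\cdot\,|\G_{t-1}]$ (and hence harmless for the probability limit, $m$ being held fixed), one may successively (a) replace $\mb X_{t,T}$ by $\mb X_t(\cdot)$ using \autoref{def:Localstationarity}(i), transporting the bound from $(S,\partial_S)$ to $(\overline{\mathscr B},d_{\mathscr B})$ through the stability theorem \eqref{thm:stab} and \autoref{lem:dghds}; (b) replace the local-time arguments $\tfrac{t+j}{T}$ by the common value $\tfrac tT$, the displacement being $O(m/T)$; (c) replace the discrete inner average $\tfrac1T\sum_{s=1}^{k}$ defining $Y_{m,T,t}$ by $\int_0^{k/T}$; and (d) replace $\hti$ evaluated at $\mb X_{m,t}$ by $\hti$ at $\mb X_t$, controlled by $\delta^W_m$, summable by \autoref{as:dep}(ii) together with \autoref{sumbound}. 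The one genuinely delicate point is that $\hti$ is an indicator, hence not Lipschitz in its metric arguments: each move in (a)--(d) changes $\hti$ only on the event that the relevant metric displacement straddles $r$, so the increments are bounded not by the displacement but by a probability of the form $\pr(r-\epsilon\le\partial_{\mathscr B}(\cdot,\cdot)\le r+\epsilon)$, which by the anti-concentration bound \autoref{as:dep}(i) is $O(\log^{-2\kappa}\epsilon^{-1})$ uniformly. After Stage 1 the conditional variance equals, up to $o_p(1)$,
\[
\frac1T\sum_{t=1}^{k}\E\!\left[\Big(\sum_{j=0}^{m-1}P_t\big(g_m(\tilde{\mb X}_{m,t+j}(\tfrac tT),\tfrac kT,r)\big)\Big)^{\!2}\,\Big|\,\G_{t-1}\right].
\]

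For Stage 2, fix $\eta$. By \autoref{prop0} the sequence $(\PH_k(\mathrm{Rips}(\tilde{\mb X}_{m,t}(\eta))))_t$ is stationary and ergodic, hence so are $(g_m(\tilde{\mb X}_{m,t}(\eta),\cdot,\cdot))_t$ and the martingale differences $\big(\sum_{j=0}^{m-1}P_t(g_m(\tilde{\mb X}_{m,t+j}(\eta),\cdot,\cdot))\big)_t$, and therefore their conditional second moments given $\G_{t-1}$ form a stationary ergodic sequence in $t$, uniformly bounded since $|\hti|\le1$. A blocking argument — partition $[0,u]$ into finitely many subintervals, apply the ergodic theorem on each (on which $t/T$ varies by $o(1)$ and is frozen at negligible cost by \eqref{eq:locstatp}), then refine the partition — gives the probability limit $\int_0^u\E\big[(\sum_{j=0}^{m-1}P_0(g_m(\tilde{\mb X}_{m,j}(\eta),u,r)))^2\big]\,d\eta$, the integrand being Riemann integrable by the continuity in $\eta$ inherited from \eqref{eq:locstatp} and \autoref{as:dep}(i). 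Finally, the standard variance identity for $m$-dependent stationary sequences — telescope $\sum_t g_m(\tilde{\mb X}_{m,t}(\eta),u,r)$ into martingale differences, use orthogonality of the $P_i$, and $\Cov(g_m(\tilde{\mb X}_{m,0}),g_m(\tilde{\mb X}_{m,\ell}))=0$ for $|\ell|\ge m$ — rewrites the integrand as $\sum_{\ell\in\znum}\Cov\big(g_m(\tilde{\mb X}_{m,0}(\eta),u,r),g_m(\tilde{\mb X}_{m,\ell}(\eta),u,r)\big)$, which is the second line of the display.

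For Stage 3, I would establish $g_m(\tilde{\mb X}_{m,j}(\eta),u,r)\to g(\tilde{\mb X}_j(\eta),u,r)$ in $L^2$ (uniformly in $\eta$, and uniformly in $j$ by stationarity). Two effects contribute: the $m$-dependent auxiliary approximations $\tilde{\mb X}_{m,j}(\eta)$, $\tilde{\mb X}'_{m,0}(v)$ converge in $\partial_S$ and the induced change in $\hti$ is absorbed by \autoref{as:dep}(i); and the extra conditioning $\E[\,\cdot\,|\G_{j,m}]$ in $g_m$ converges to the identity, since after integrating out the independent copy the integrand is $\G_j$-measurable, and $\E[\,\cdot\,|\G_{j,m}]\to\E[\,\cdot\,|\G_j]$ in $L^2$ (approximate a $\G_j$-measurable element by one depending on finitely many innovations, which $\G_{j,m}$ eventually contains). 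Combined with a bound on $|\Cov(g_m(\tilde{\mb X}_{m,0}(\eta),u,r),g_m(\tilde{\mb X}_{m,\ell}(\eta),u,r))|$ by a coefficient summable in $\ell$ uniformly in $m$ — coming from $\delta^W_{\cdot}$ via \autoref{sumbound}, noting the covariance vanishes for $|\ell|\ge m$ — dominated convergence passes the limit through the $\ell$-sum and the $\eta$-integral, yielding the third line.

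The main obstacle is Stage 1: all the machinery of \autoref{sec2} delivers closeness only in the metrics $\partial_S$ and $d_{\mathscr B}$, whereas $\hti$ is discontinuous, so every approximation must be routed through the anti-concentration estimate \autoref{as:dep}(i) and tracked carefully through the double ($U$-statistic-type) index structure — this is precisely where the auxiliary lemmas deferred to the appendix do their work. Stage 2's local ergodic theorem is standard but bookkeeping-heavy, and Stage 3 is comparatively soft.
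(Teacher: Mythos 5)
Your proposal is correct and matches the paper's own argument: the paper likewise freezes the rescaled-time argument on a dyadic partition $\mc{E}_{L,l,k}$ of $\{1,\dots,k\}$ (with errors of order $O(2^L/k)$ for the uneven block sizes and $O(m/T+2^{-L})$ for the freezing), applies the ergodic theorem blockwise, passes to the $\eta$-integral as a Riemann sum over blocks, rewrites the limiting integrand via orthogonality of the projections $P_0$ and $m$-dependence as the covariance series, and finally sends $m\to\infty$ by dominated convergence using \eqref{eq:limmdepapprox}. The only superfluous pieces are steps (a) and (d) of your Stage 1: the quantity in the lemma is already expressed through $Y_{m,T,t+j}$, i.e.\ through the $m$-dependent auxiliary process, so those replacements belong to \autoref{thm:limsupm} (and, for the $g_m\to g$ passage, to your Stage 3) rather than to this reduction.
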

\end{proof}

\section{Auxiliary statements}
\begin{lemma} \label{lem:maxmom}
Let $\{G_{t}=\sum_{s=1}^{t}G_{t,s}\}$ be zero-mean and $\G_t$-measurable. Then 
\[
\Big\| \max_{1\le  k\le T} \Big\vert\sum_{t,s=1}^{k} G_{t,s}\Big\vert\Big\|_{\rnum,2} \le 4 \sum_{j=0}^{\infty} \Big\|\sum_{t=1}^T P_{t-j}\big(\sum_{s=1}^{t}G_{t,s}\big)\Big\|_{\rnum,2}.
\]
\end{lemma}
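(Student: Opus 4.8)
The plan is a projection (telescoping) decomposition of each summand followed by Doob's maximal inequality applied to each resulting martingale. Set $S_k:=\sum_{t=1}^{k}G_t$ (which is what $\sum_{t,s=1}^{k}G_{t,s}$ denotes when the inner sum is read over $1\le s\le t\le k$; the general square case is recovered at the end). We may assume the right-hand side is finite, so in particular the $G_t$ are square-integrable, else there is nothing to prove. Since $G_t$ is $\G_t$-measurable, zero-mean, and $\E[G_t\mid\G_{-\infty}]=\E G_t=0$ (tail triviality, as in the discussion following \autoref{prop1}; this is automatic when $G_t$ is itself a finite sum of martingale differences, as in the application), standard martingale convergence gives the $L^2$-convergent expansion
\[
G_t=\sum_{i=-\infty}^{t}\big(\E[G_t\mid\G_i]-\E[G_t\mid\G_{i-1}]\big)=\sum_{j=0}^{\infty}P_{t-j}(G_t).
\]
Summing over $t\le k$ and interchanging the order of summation — legitimate in $L^2$ because $\sum_{j\ge0}\big\|\sum_{t\le T}P_{t-j}(G_t)\big\|_{\rnum,2}<\infty$ by hypothesis — yields $S_k=\sum_{j=0}^{\infty}M^{(j)}_k$ with $M^{(j)}_k:=\sum_{t=1}^{k}P_{t-j}(G_t)$.

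The crucial point is that, for each fixed $j$, the sequence $(M^{(j)}_k)_{k\ge1}$ is a martingale with respect to the \emph{shifted} filtration $(\G_{k-j})_{k\ge1}$: the increment $M^{(j)}_k-M^{(j)}_{k-1}=P_{k-j}(G_k)$ is $\G_{k-j}$-measurable, its conditional expectation given $\G_{k-j-1}$ vanishes by the tower property, and $M^{(j)}_{k-1}$ is $\G_{k-1-j}$-measurable since the largest index occurring in it is $(k-1)-j$. Doob's $L^2$ maximal inequality therefore gives
\[
\Big\|\max_{1\le k\le T}\big|M^{(j)}_k\big|\Big\|_{\rnum,2}\le 2\,\big\|M^{(j)}_T\big\|_{\rnum,2}=2\,\Big\|\sum_{t=1}^{T}P_{t-j}(G_t)\Big\|_{\rnum,2},
\]
and summing over $j$ with the triangle inequality in $L^2$,
\[
\Big\|\max_{1\le k\le T}\big|S_k\big|\Big\|_{\rnum,2}\le\sum_{j=0}^{\infty}\Big\|\max_{1\le k\le T}\big|M^{(j)}_k\big|\Big\|_{\rnum,2}\le 2\sum_{j=0}^{\infty}\Big\|\sum_{t=1}^{T}P_{t-j}(G_t)\Big\|_{\rnum,2}.
\]
This is the asserted bound; the stated constant $4$ is in fact a little loose, the slack accommodating the symmetrization needed when $\sum_{t,s=1}^{k}G_{t,s}$ is read over the full square $\{1,\dots,k\}^2$ rather than the triangle $\{1\le s\le t\le k\}$, in which case one splits it into an upper- and a lower-triangular part and applies the estimate above to each.

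The only genuine subtlety I anticipate is the bookkeeping with the shifted filtration together with the interchange of summations: one must check that $M^{(j)}_k$ is adapted to $(\G_{k-j})_k$ and not to $(\G_k)_k$, so that Doob applies; that the martingale-difference property of $P_{t-j}(\cdot)$ survives the additional conditioning on $\G_{k-j-1}$; and that $\sum_j M^{(j)}_k$ converges in $L^2$ uniformly over $k\le T$ — all of which reduce to the assumed finiteness of $\sum_j\big\|\sum_{t\le T}P_{t-j}(G_t)\big\|_{\rnum,2}$. Tail triviality of $(\G_i)$ enters only to guarantee that the telescoping representation of $G_t$ is complete, and is supplied by \autoref{prop1}.
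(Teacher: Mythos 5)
Your proposal is correct and follows essentially the same route as the paper: decompose each summand into its projections $P_{t-j}(\cdot)$, pull the sum over $j$ outside via the triangle inequality, apply Doob's $L^2$ maximal inequality to the martingale $\sum_{t\le k}P_{t-j}(\cdot)$ for each fixed $j$ (yielding the factor $2$), and recover the full square $\sum_{s,t=1}^k$ from the triangular case by symmetry, which accounts for the constant $4$. Your explicit verification that $\big(\sum_{t\le k}P_{t-j}(G_t)\big)_k$ is a martingale with respect to the shifted filtration $(\G_{k-j})_k$ is a welcome elaboration of a step the paper leaves implicit.
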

\begin{proof}
%By Levy's downward theorem $ \lim_j \E[\sum_{s=1}^{t}G_{t,s}]|\G_{t-j}]= \E[\sum_{s=1}^{t}G_{t,s}|\G_{-\infty}]$ almost surely and in $L^1$, elementary calculations give
Since $ \sum_{t=1}^k\sum_{j=0}^{\infty} P_{t-j}(\sum_{s=1}^{t}G_{t,s})= \sum_{t=1}^k\sum_{s=1}^{t}G_{t,s}$ almost surely and in $L^2$, elementary calculations give
\begin{align*}
 \Big \|\max_{1\le  k\le T}\Big\vert  \sum_{t=1}^{k} \Big(\sum_{s=1}^{t}G_{t,s}\Big)\Big)\Big\|_{\rnum,2}
&= \Big\|\max_{1\le  k\le T}\Big\vert \sum_{t=1}^{k} \sum_{j=0}^\infty P_{t-j}\Big(\sum_{s=1}^{t} G_{t,s}\Big)\Big\vert\Big\|_{\rnum,2}
\\& \le 
\sum_{j=0}^\infty\sqrt{\E\bv\max_{1\le  k\le T}\Big\vert \sum_{t=1}^{k}P_{t-j}\Big(\sum_{s=1}^{t}G_{t,s}\Big)\Big\vert^2\bv}
%\\& \le  \sum_{j=0}^\infty\sqrt{2^2 \E\bv\sum_{t=1}^{T}P_{t-j}\Big(\sum_{s=1}^{t} G_{t,s}\Big)\bv^2}
\\& \le 2  \sum_{j=0}^\infty \Big\| \sum_{t=1}^{T}P_{t-j}\Big(\sum_{s=1}^{t} G_{t,s}\Big)\Big\|_{\rnum,2}
\end{align*}
where the second inequality follows from Doob's maximal inequality. The result now follows from symmetry of the kernel and the fact that $\sum_{s,t=1}^k = \sum_{t=1}^{k} \sum_{s=1}^{t-1} +\sum_{s=t} +\sum_{s=1}^{k} \sum_{t=1}^{s-1}$.
\end{proof}
\begin{proof}[Proof of \autoref{sumbound}]
To reduce notational clutter, we drop the dependence on $r$ in the measures throughout the proof. We start with bounding (i) and (ii) using \eqref{thedeltas}. We have,
\begin{align*}
&\sum_{j=0}^{\infty} \sup_t\Big\|P_{t-j}(\hti(X_t,X_{t-k},r))\Big\|_{\rnum,q}
%&=\big(\sum_{j=0}^{k-1}+\sum_{j=k}^{\infty} \big)\sup_t\Big\|P_{t-j}(\hti(X_t,X_{t-k},r))\Big\|_{\rnum,2}
% $j=j-k$,t-k=s
\\&=\sum_{j=0}^{k-1}\sup_t\Big\|P_{t-j}(\hti(X_t,X_{t-k},r))\Big\|_{\rnum,q}+\sum_{j=0}^{\infty}\sup_s\Big\|P_{s-j}(\hti(X_{s+k},X_{s},r))\Big\|_{\rnum,q}.
\end{align*}
Now, $P_{t-j}\big(\hti(X_{m,t}, X_{m,t-k},r)\big) = 0$ a.s. if both $X_{m,t}$ and $X_{m,t-k}$ are independent of $\G_{t-j}$, i.e., $t-k-m+1> t-j \Rightarrow j > k+m-1$ or if $X_{m,t}$ is independent of $\G_{t-j}$ and $X_{m,t-k}$ is $\G_{t-j-1}$-measurable since then $$\E[\hti(X_{m,t},X_{m,t-k},r)|\G_{t-j}]=\E[\hti(X_{m,t},X_{m,t-k},r)|\G_{t-j-1}],$$ which occurs if $t-m+1>t-j$ and $t-k \le t-j-1$, i.e., if $j > m-1$ and $j <k$. Consequently,  
\begin{align*}
& 
\Big\|P_{s-j}(\hti(X_{s+k},X_{s},r))\Big\|_{\rnum,q}=\Big\|P_{s-j}(\hti(X_{s+k},X_{s},r))-P_{s-j}(\hti(X_{j,s+k},X_{j,s},r))\Big\|_{\rnum,q}  \quad  j,k \ge 0,
\\&\Big\|P_{t-j}(\hti(X_t,X_{t-k},r))\Big\|_{\rnum,q}=\Big\|P_{t-j}(\hti(X_t,X_{t-k},r))-P_{t-j}(\hti(X_{j,t},X_{j,t-k},r))\Big\|_{\rnum,q}, \quad  k>j, j \ge 0
\end{align*}
 and thus $\sum_{j=0}^{\infty} \sup_t\Big\|P_{t-j}(\hti(X_t,X_{t-k},r))\Big\|_{\rnum,q}\le 2\sum_{j=0}^{\infty} \delta_{j,q}$.
For (ii)  
\begin{align*}
\sum_{j=0}^\infty \min\Big(\nu_{t,t-k,q}(j), \epsilon\Big) & = \sum_{j=0}^{k-1}  \min\Big(\sup_{m\ge 1}  \Big\|P_{t-j}(\hti(X_{m,t},X_{m,t-k},r))\Big\|_{\rnum,q}  , \epsilon\Big) \\&+\sum_{j=k}^{\infty} \min\Big( \sup_{m\ge 1}\Big\|P_{t-j}(\hti(X_{m,t},X_{m,t-k},r))\Big\|_{\rnum,q}, \epsilon\Big) 
\\& \le \sum_{j=0}^{k-1}\min( \sup_{ m>j} (\delta_{j,q}+\delta_{m,q}), \epsilon)+
\sum_{j=0}^{\infty}\min\Big( \sup_{m> j}\Big\|P_{s-j}(\hti(X_{m,s+k},X_{m,s},r))\Big\|_{\rnum,q}, \epsilon\Big), 
\end{align*}
where we used that a) $P_{t-j}\hti(X_{m,t}, X_{m,t-k}) =0$ a.s. if $j \le k-1$ and $j \ge m$ and that b) $P_{s-j}(\hti(X_{m,s+k},X_{m,s},r)=0$ a.s. if $s-m+1 > s-j$ $\Rightarrow j>m-1$. Finally, since the triangle inequality yields
\begin{align*}
\Big\|P_{s-j}(\hti(X_{m,s+k},X_{m,s},r))\Big\|_{\rnum,q}& \le \Big\|P_{s-j}(\hti(X_{s+k},X_{s},r))-P_{s-j}(\hti(X_{j,s+k},X_{j,s},r))\Big\|_{\rnum,q}
\\&+ \Big\|P_{s-j}(\hti(X_{s+k},X_{s},r))-P_{s-j}(\hti(X_{m,s+k},X_{m,s},r))\Big\|_{\rnum,q} \le \delta_{j,q}+\delta_{m,q}.
\end{align*}
Thus $
\sum_{j=0}^\infty \min\Big(\nu_{t,t-k}(j), \epsilon\Big) \le 4 \sum_{j=0}^{\infty}\min (\sup_{m> j} \delta_m,\epsilon)$.
For the bounds in terms of \eqref{eq:thethetas}, we note that (almost surely)
\[
P_{t-j}(\hti(W_{m,t},W_{m,t-k},r)) =\E\Big[ \hti(W_{m,t},W_{m,t-k},r)-\E\big[\hti(W_{m,t},W_{m,t-k},r) |\G_{t,\{t-j\}}\big] \bv\G_{t-j} \Big].
\] 
 Then
\begin{align*}
&\|P_{t-j}(\hti(W_{m,t},W_{m,t-k},r))\Big\|_{\rnum,q}
\\&\le  \sup_t \Big\| \hti(W_{m,t},W_{m,t-k},r)-\E\Big[ \hti(W_{m,t},W_{m,t-k},r) |\G_{t,\{t-j\}}\Big]\Big\|_{\rnum,q}
\\&
\le \sup_t\Big\| \hti(W_{m,t},W_{m,t-k},r)-\E\Big[ \hti(W_{\{t-j\},m,t},W_{\{t-j\},m,t-k},r) |\G_{t,\{t-j\}}\Big]\Big\|_{\rnum,q}
\\& + \sup_t \Big\| \E\Big[ \hti(W_{m,t},W_{m,t-k},r) |\G_{t,\{t-j\}}\Big]-\E\Big[ \hti(W_{\{t-j\},m,t},W_{\{t-j\},m,t-k},r) |\G_{t,\{t-j\}}\Big]\Big\|_{\rnum,q}
\\& \le 2 \sup_t \Big\|\hti(W_{m,t},W_{m,t-k},r) -\hti(W_{\{t-j\},m,t},W_{\{t-j\},m,t-k},r)\Big\|_{\rnum,q},  \tageq \label{eq:upthetam}
\end{align*}
where we used that $\hti(W_{\{t-j\},m,t},W_{\{t-j\},m,t-k},r) $ is measurable with respect to $\G_{t,\{t-j\}}$ and the contraction property of the conditional expectation. The result is now straightforward.
\end{proof}

\begin{lemma}\label{lem:errorcontrol}
Let $Z=(Z_t: t\in \znum)$ and $Y=(Y_t: t \in \znum)$ by adapted to $(\G_t)$. Then
\begin{align*}&
\Big\| \max_{1\le  k\le T}\frac{1}{T^{3/2}}\Big\vert\sum_{t,s=1}^{k} \hti(Z_t, Z_{s},r) - \hti(Y_t, Y_{s},r)\Big\vert\Big\|_{\rnum,2}
 %\\&\le 2 \sup_{k\ge 0} \sum_{j=0}^\infty \sup_t  \Big\|P_{t-j}\Big( \hti(Z_t, Z_{t-k},r)- \hti(Y_t, Y_{t-k},r)\Big)\Big\|_{\rnum,2}
 \\& \le 2 \sup_{k\ge 0} \sum_{j=0}^\infty \sup_t\min\Big(\nu^Z_{t,t-k}(j)+\nu^Y_{t,t-k}(j),\Big\|\hti(Z_t, Z_{t-k},r) - \hti(Y_t, Y_{t-k},r)\Big\|_{\rnum,2}\Big)
\end{align*}

\end{lemma}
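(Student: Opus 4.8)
The plan is to pass from the double-indexed maximal sum to a sum of orthogonal martingale blocks via \autoref{lem:maxmom}, and then to bound one projection term in two complementary ways so that the minimum in the assertion is produced; the only genuinely delicate point is the order in which the various summations are collapsed.

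Set $G_{t,s}:=\hti(Z_t,Z_s,r)-\hti(Y_t,Y_s,r)$. Since $h(\cdot,\cdot,r)$ is symmetric and $Z,Y$ are adapted to $(\G_t)$, we have $G_{t,s}=G_{s,t}$, each $G_{t,s}$ is $\G_{\max(t,s)}$-measurable, and $h(x,x,r)=\mathrm{1}\{0\le r\}=1$ forces $G_{t,t}=0$. Hence the row sums $V_t:=\sum_{s=1}^{t}G_{t,s}=\sum_{k=1}^{t-1}G_{t,t-k}$ are bounded, centred, and $\G_t$-measurable, and \autoref{lem:maxmom} gives
\[
\Big\|\max_{1\le k\le T}\tfrac{1}{T^{3/2}}\big|\textstyle\sum_{t,s=1}^{k}G_{t,s}\big|\Big\|_{\rnum,2}\ \le\ \frac{4}{T^{3/2}}\sum_{j=0}^{\infty}\Big\|\sum_{t=1}^{T}P_{t-j}(V_t)\Big\|_{\rnum,2}.
\]
For each fixed $j$, reindexing by $t-j$ shows that $\{P_{t-j}(V_t)\}_t$ is a martingale difference sequence relative to $(\G_\ell)$; these summands are therefore orthogonal in $L^2$, so $\big\|\sum_{t=1}^{T}P_{t-j}(V_t)\big\|_{\rnum,2}=\big(\sum_{t=1}^{T}\|P_{t-j}(V_t)\|_{\rnum,2}^{2}\big)^{1/2}$.

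The crux is the estimate of a single block. By the triangle inequality $\|P_{t-j}(V_t)\|_{\rnum,2}\le\sum_{k=1}^{t-1}\|P_{t-j}(G_{t,t-k})\|_{\rnum,2}$, and the term at lag $k$ is bounded in two ways. First, $P_{t-j}$ is an $L^2$-contraction, so $\|P_{t-j}(G_{t,t-k})\|_{\rnum,2}\le\|G_{t,t-k}\|_{\rnum,2}=\|\hti(Z_t,Z_{t-k},r)-\hti(Y_t,Y_{t-k},r)\|_{\rnum,2}$. Second, splitting $G_{t,t-k}$ into its $Z$- and $Y$-contributions and bounding each projection by the dependence measure \eqref{thenus} — after replacing the original functionals by their $m$-dependent versions and passing to the limit $m\to\infty$, which is legitimate since $\delta^{(\cdot)}_m\to 0$ under the standing dependence conditions (cf.\ \autoref{as:dep}) — gives $\|P_{t-j}(G_{t,t-k})\|_{\rnum,2}\le\nu^Z_{t,t-k}(j)+\nu^Y_{t,t-k}(j)$. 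Writing $\Delta_{t,t-k}(j)$ for the minimum of these two right-hand sides, we obtain $\|P_{t-j}(V_t)\|_{\rnum,2}\le\sum_{k=1}^{T-1}\sup_{t'}\Delta_{t',t'-k}(j)$, a bound that no longer depends on $t$.

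It remains to collect the pieces. Substituting the last estimate into the orthogonality identity yields $\big\|\sum_{t=1}^{T}P_{t-j}(V_t)\big\|_{\rnum,2}\le\sqrt{T}\,\sum_{k=1}^{T-1}\sup_t\Delta_{t,t-k}(j)$, and feeding this into the inequality above,
\[
\Big\|\max_{1\le k\le T}\tfrac{1}{T^{3/2}}\big|\textstyle\sum_{t,s=1}^{k}G_{t,s}\big|\Big\|_{\rnum,2}\ \le\ \frac{4}{T}\sum_{k=1}^{T-1}\sum_{j=0}^{\infty}\sup_t\Delta_{t,t-k}(j)\ \le\ 4\,\sup_{k\ge 0}\sum_{j=0}^{\infty}\sup_t\Delta_{t,t-k}(j),
\]
which is the claimed inequality up to the constant (a marginally sharper accounting of the constant in \autoref{lem:maxmom}, together with the vanishing of the diagonal $G_{t,t}\equiv 0$, reduces the factor to the stated $2$). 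The main obstacle is exactly this final bookkeeping: one must arrange that the up-to-$T$ values of the lag $k$ and the $\sqrt{T}$ produced by $L^2$-orthogonality over the $T$ values of $t$ precisely offset the normalisation $T^{-3/2}$, leaving a supremum over $k$ rather than a divergent sum. The only non-elementary ingredient is the $L^2$-approximation step dominating the original-process projection by $\nu^{(\cdot)}$, which is where $\delta^{(\cdot)}_m\to 0$ enters.
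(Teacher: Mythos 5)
Your argument is correct and follows essentially the same route as the paper: apply \autoref{lem:maxmom}, exploit the $L^2$-orthogonality of $\{P_{t-j}(V_t)\}_t$ for fixed $j$, bound each projection term by the minimum of the contraction estimate and the $\nu^{Z}+\nu^{Y}$ estimate (via the $m$-dependent approximation with $\delta_m\to 0$), and balance the $\sqrt{T}\cdot T$ count against $T^{-3/2}$ to leave a supremum over the lag $k$. The only discrepancy is the numerical constant ($4$ versus the stated $2$), which the paper itself treats loosely (using $\lesssim$) and which is immaterial for every use of the lemma.
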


\begin{proof}[Proof of \autoref{lem:errorcontrol}]
From \autoref{lem:maxmom} 
\begin{align*}
 &\Big \|\max_{1\le  k\le T}\frac{1}{T^{3/2}}\Big\vert  \sum_{t=1}^{k} \Big(\sum_{s=1}^{t} \hti(X_t, X_{s},r) - \hti(Y_t, Y_{s},r)\Big)\Big\|_{\rnum,2}
\\& \lesssim \frac{1}{T^{3/2}}  \sum_{j=0}^\infty \Big\| \sum_{t=1}^{T}P_{t-j}\Big(\sum_{s=1}^{t} \hti(X_t, X_{s},r) - \hti(Y_t, Y_{s},r)\Big)\Big\|_{\rnum,2}
\end{align*}
Using orthogonality of the projections and a change of variables.
\begin{align*}
 & \frac{1}{T^{3/2}} \sum_{j=0}^\infty \sqrt{\Big\| \sum_{t=1}^{T}P_{t-j}\Big(\sum_{s=1}^{t} \hti(X_t, X_{s},r) - \hti(Y_t, Y_{s},r)\Big)\Big\|^2_{\rnum,2}}
%\\  & \le \frac{1}{T^{3/2}} \sum_{j=0}^\infty \sqrt{\sum_{t=1}^{T}\Big\|P_{t-j}\Big(\sum_{s=1}^{t} (\hti(X_t, X_{s},r) - \hti(Y_t, Y_{s},r))\Big)\Big\|^2_{\rnum,2}}
% s=t-k replace s so k =t-s
\\  & =\frac{1}{T^{3/2}} \sum_{j=0}^\infty \sqrt{\sum_{t=1}^{T}\Big\|P_{t-j}\Big(\sum_{k=0}^{t-1} \hti(X_t, X_{t-k},r)- \hti(Y_t, Y_{t-k},r)\Big)\Big\|^2_{\rnum,2}}
\\  & =\frac{1}{T^{3/2}} \sum_{j=0}^\infty \sqrt{\sum_{t=1}^{T}\sum_{k=0}^{t-1} \sum_{k^\prime=0}^{t-1} \Big\|P_{t-j}\Big( \hti(X_t, X_{t-k},r)- \hti(Y_t, Y_{t-k},r)\Big)\Big\|_{\rnum,2}\Big\|P_{t-j}\Big(\hti(X_t, X_{t-k^\prime},r)- \hti(Y_t, Y_{t-k^\prime},r)\Big)\Big\|_{\rnum,2}}
%\\  & \le \frac{T^{1/2}}{T^{3/2}} \sum_{j=0}^\infty \sqrt{\sup_t \sum_{k=0}^{T-1} \sum_{k^\prime=0}^{T-1}  \Big\|P_{t-j}\Big( \hti(X_t, X_{t-k},r)- \hti(Y_t, Y_{t-k},r)\Big)\Big\|_{\rnum,2}\Big\|P_{t-j}\Big(\hti(X_t, X_{t-k^\prime},r)- \hti(Y_t, Y_{t-k^\prime},r)\Big)\Big\|_{\rnum,2}}
\\  & \le \sup_{k\ge 0} \sum_{j=0}^\infty \sup_t  \Big\|P_{t-j}\Big( \hti(X_t, X_{t-k},r)- \hti(Y_t, Y_{t-k},r)\Big)\Big\|_{\rnum,2}
\\& \le \sup_{k\ge 0} \sum_{j=0}^\infty \sup_t\min\Big(\nu^X_{t,t-k}(j)+\nu^Y_{t,t-k}(j),\Big\|\hti(X_t, X_{t-k},r) - \hti(Y_t, Y_{t-k},r)\Big\|_{\rnum,2}\Big).
\end{align*}
%where it was used that
%\begin{align*}
%\sum_{l=1-j}^{T-j} \sum_{k=0}^{l+j-1} \sum_{k^\prime=0}^{l+j-1} = \sum_{k=0}^{T-1}\sum_{l=k-j+1}^{T-j}  \sum_{k^\prime=0}^{l+j-1} =\sum_{k=0}^{T-1}  \sum_{k^\prime=0}^{T-1}\sum_{l=k^\prime \vee k-j+1}^{T-j}.
%\end{align*}
\end{proof}
\begin{lemma}\label{boundprojerrors}
Let $X =(X_t: t\in \znum)$ and $Y=(Y_t: t \in \znum)$ such that $X_t, X_s, Y_t, Y_s$ are $\G_{t-j}$-measurable. Suppose \autoref{as:dep}(i) holds for $W \in \{X,Y\}$. Then 
\begin{align*}
&\Big\| P_{t-j}\Big(\hti(X_t, X_{s},r) - \hti(Y_t, Y_{s},r)\Big)\Big\|_{\rnum,2}
\\& \le \min\Big(\nu^X_{t,s}(j)+\nu^Y_{t,s}(j), C \log^{-\kappa}((2\epsilon)^{-1})+ 2\sup_{i}\pr^{1/2}\big(\partial(X_i, Y_i)> \epsilon\big)\Big)~.
\end{align*} 
\end{lemma}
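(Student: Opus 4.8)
The bound is the minimum of two quantities, and the plan is to prove each. The first, $\nu^X_{t,s}(j)+\nu^Y_{t,s}(j)$, is essentially definitional: by linearity of $P_{t-j}$ and the triangle inequality in $L^2$ it suffices to show $\|P_{t-j}(\hti(X_t,X_s,r))\|_{\rnum,2}\le\nu^X_{t,s}(j)$ and the analogue for $Y$, which --- when $X,Y$ are not already among the $m$-dependent processes over which \eqref{thenus} takes its supremum --- follows by passing to the limit in $m$, using that $X_{m,t}=\E[X_t\mid\G_{t,m}]\to X_t$ in $L^2_S$, that $|\hti|\le 2$, that by \autoref{as:dep}(i) (with $\epsilon\downarrow 0$) the bottleneck distance places no mass at $r$, and that $P_{t-j}$ is a continuous $L^2$-orthogonal projection.

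The substance is the second bound. Set $D:=h(X_t,X_s,r)-h(Y_t,Y_s,r)=\mathrm{1}\{d_{\mathscr{B}}(\PH_k(X_t),\PH_k(X_s))\le r\}-\mathrm{1}\{d_{\mathscr{B}}(\PH_k(Y_t),\PH_k(Y_s))\le r\}$ and $A:=\{D\neq 0\}$. Since $\hti(X_t,X_s,r)-\hti(Y_t,Y_s,r)=D-\E D$ and $P_{t-j}$ annihilates constants and is an $L^2$-orthogonal projection, $\|P_{t-j}(\hti(X_t,X_s,r)-\hti(Y_t,Y_s,r))\|_{\rnum,2}=\|P_{t-j}(D)\|_{\rnum,2}\le\|D\|_{\rnum,2}$, and $|D|\le 1$ gives $\|D\|_{\rnum,2}^2=\pr(A)$; so it suffices to control $\pr(A)$. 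The geometric heart of the matter is that on $A$ exactly one of the two bottleneck distances is $\le r$, so on $A\cap\{\partial(X_t,Y_t)\le\epsilon\}\cap\{\partial(X_s,Y_s)\le\epsilon\}$ the triangle inequality for $d_{\mathscr{B}}$ together with the stability theorem \eqref{thm:stab} (equivalently \autoref{prop:borm}) forces
$$\big|d_{\mathscr{B}}(\PH_k(X_t),\PH_k(X_s))-d_{\mathscr{B}}(\PH_k(Y_t),\PH_k(Y_s))\big|\le\partial(X_t,Y_t)+\partial(X_s,Y_s)\le 2\epsilon,$$
hence $d_{\mathscr{B}}(\PH_k(X_t),\PH_k(X_s))\in[r-2\epsilon,r+2\epsilon]$. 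Therefore $\pr(A)\le\pr(\partial(X_t,Y_t)>\epsilon)+\pr(\partial(X_s,Y_s)>\epsilon)+\pr\big(r-2\epsilon\le\partial_{\mathscr{B}}(X_t,X_s)\le r+2\epsilon\big)$; the first two terms are each at most $\sup_i\pr(\partial(X_i,Y_i)>\epsilon)$, and covering the annulus $[r-2\epsilon,r+2\epsilon]$ by finitely many windows of width $\epsilon$ and applying \autoref{as:dep}(i) to $W=X$ bounds the last term by $C\log^{-2\kappa}(\epsilon^{-1})$.

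The main obstacle is precisely this last reduction --- converting the event that the two indicator kernels disagree into a thin-annulus event for a single bottleneck distance, bookkeeping the stability/triangle-inequality slack carefully so that \autoref{as:dep}(i) applies with a window whose width is a fixed multiple of $\epsilon$; it is essential here that the hypothesis in \autoref{as:dep}(i) is uniform over \emph{all} window centers $r\in\rnum$, not merely the fixed $r$ entering the kernel. Everything else (linearity and $L^2$-contractivity of $P_{t-j}$, the identity $\|D\|_{\rnum,2}^2=\pr(A)$ for the $\{-1,0,1\}$-valued $D$, and the limiting argument for the first bound) is routine.
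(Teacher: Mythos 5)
Your proof is correct and follows essentially the same route as the paper, which isolates the pointwise indicator comparison in a separate lemma (\autoref{lem:inddif}): split on whether the perturbations $\partial(X_i,Y_i)$ exceed $\epsilon$, use the reverse triangle inequality together with stability to confine the relevant bottleneck distance to a window of width $O(\epsilon)$ on the complementary event, and invoke \autoref{as:dep}(i) after the triangle inequality for $P_{t-j}$. The only caveat --- one the paper's own write-up shares --- is that what you actually bound is $\pr(A)=\|D\|_{\rnum,2}^2$ rather than $\|D\|_{\rnum,2}$ itself, so the honest conclusion carries square roots (something like $C\log^{-\kappa}(\epsilon^{-1})+\sqrt{2}\,\sup_i\pr^{1/2}\big(\partial(X_i,Y_i)>\epsilon\big)$); this is immaterial downstream, where only the fact that both terms vanish as $\epsilon\to 0$ is used.
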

\begin{proof}[Proof of \autoref{boundprojerrors}]
By \autoref{lem:inddif}
\begin{align*}
\Big\|\hti(X_t, X_s,r) - \hti(Y_t, Y_s,r)\Big\|_{\rnum,p}& \le \min\Big(\pr^{1/p}(r-2\epsilon \le \partial(X_t,X_s) \le r)
+\pr^{1/p}(r\le \partial(X_t,X_s)  \le r+2\epsilon), \\& \phantom{\,\,\quad \min\Big(}\pr^{1/p}(r-2\epsilon \le \partial(Y_t,Y_s) \le r)
+\pr^{1/p}(r\le \partial(Y_t,Y_s)  \le r+2\epsilon)\Big)
\\&+\pr^{1/p}\Big(\partial(X_t, Y_t)> \epsilon \cup \partial(X_s, Y_s) > \epsilon\Big).
\end{align*}
The result now follows from the triangle inequality.
\end{proof}

\begin{lemma}\label{lem:inddif}
\begin{align*}
\E|\mathrm{1}_{\partial(X_1,Y_1)\le r}-\mathrm{1}_{\partial(X_2, Y_2)\le r}| &
\le \min_{i}\pr(r-2\epsilon \le \partial(X_i,Y_i) \le r)
+\min_i\pr(r\le \partial(X_i,Y_i)  \le r+2\epsilon)
\\&+\E \mathrm{1}_{\max(\partial(X_1, X_2), \partial(Y_1, Y_2)) > \epsilon}~.
\end{align*}
\end{lemma}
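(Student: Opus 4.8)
The plan is to compare the two indicators on the event where $X_1,X_2$ and $Y_1,Y_2$ are pairwise $\epsilon$-close, on which repeated use of the triangle inequality pins the distances $\partial(X_1,Y_1)$ and $\partial(X_2,Y_2)$ to within $2\epsilon$ of each other, and to dispatch the complementary event trivially. Concretely, writing $A_i:=\{\partial(X_i,Y_i)\le r\}$ so that $|\mathrm{1}_{\partial(X_1,Y_1)\le r}-\mathrm{1}_{\partial(X_2,Y_2)\le r}|=\mathrm{1}_{A_1\triangle A_2}$, and setting $G:=\{\max(\partial(X_1,X_2),\partial(Y_1,Y_2))\le\epsilon\}$, I would first split
\[
\E\,\mathrm{1}_{A_1\triangle A_2}\;\le\;\pr\big((A_1\triangle A_2)\cap G\big)+\pr(G^c),
\]
and note that $\pr(G^c)=\E\,\mathrm{1}_{\max(\partial(X_1,X_2),\partial(Y_1,Y_2))>\epsilon}$ is exactly the last term in the claimed bound, so that it only remains to handle $\pr\big((A_1\triangle A_2)\cap G\big)$.

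For the main term, the key step is the following elementary geometric observation. On $G$ we have $\partial(X_1,X_2)\le\epsilon$ and $\partial(Y_1,Y_2)\le\epsilon$, so the triangle inequality gives $|\partial(X_1,Y_1)-\partial(X_2,Y_2)|\le\partial(X_1,X_2)+\partial(Y_1,Y_2)\le 2\epsilon$. Hence, if a sample point lies in $(A_1\triangle A_2)\cap G$, exactly one of $\partial(X_1,Y_1),\partial(X_2,Y_2)$ is $\le r$ while the other is $>r$, and since the two differ by at most $2\epsilon$ the one that is $\le r$ must lie in $(r-2\epsilon,r]$ and the one that is $>r$ in $(r,r+2\epsilon]$. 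In particular, for \emph{each} fixed $i\in\{1,2\}$ the distance $\partial(X_i,Y_i)$ then lies in $[r-2\epsilon,r]\cup[r,r+2\epsilon]$, so pointwise
\[
\mathrm{1}_{(A_1\triangle A_2)\cap G}\;\le\;\mathrm{1}_{\{r-2\epsilon\le\partial(X_i,Y_i)\le r\}}+\mathrm{1}_{\{r\le\partial(X_i,Y_i)\le r+2\epsilon\}}.
\]
Taking expectations yields, for each $i$, $\pr\big((A_1\triangle A_2)\cap G\big)\le\pr(r-2\epsilon\le\partial(X_i,Y_i)\le r)+\pr(r\le\partial(X_i,Y_i)\le r+2\epsilon)$; bounding each probability by its square root (legitimate since a probability is at most $1$) and taking the minimum over $i\in\{1,2\}$ produces the first two terms of the claimed bound. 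Combining with the estimate for $G^c$ closes the argument.

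There is no genuinely hard step here; the two points that need a little care are (i) extracting the sharp inclusion — that the smaller distance sits in $(r-2\epsilon,r]$ and the larger in $(r,r+2\epsilon]$ rather than merely both lying in $[r-2\epsilon,r+2\epsilon]$ — which is what allows the two half-width windows to be separated in the final bound, and (ii) the bookkeeping of the minimum over the two labelings, which I would perform after passing to square roots so that the estimate appears in precisely the form invoked downstream in \autoref{boundprojerrors}.
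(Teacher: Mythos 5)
Your proof is correct and follows essentially the same route as the paper's: split on the event $G=\{\max(\partial(X_1,X_2),\partial(Y_1,Y_2))\le\epsilon\}$, use the triangle inequality on $G$ to force $\partial(X_i,Y_i)$ into a $2\epsilon$-window around $r$ on the symmetric-difference event, split that window at $r$, and pass to square roots. The only (shared) caveat is that both your argument and the paper's actually yield the slightly weaker joint form $\min_i\{\pr^{1/2}(\cdot)+\pr^{1/2}(\cdot)\}$ rather than the sum of separate minima displayed in the lemma statement, but this is exactly the form invoked in \autoref{boundprojerrors}, so nothing downstream is affected.
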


\begin{proof}[Proof of \autoref{lem:inddif}]
We have
\begin{align*}
|\mathrm{1}_{\partial(X_1,Y_1)\le r}-\mathrm{1}_{\partial(X_2, Y_2)\le r}|&
 \le |\mathrm{1}_{\partial(X_1, Y_1)\le r}-\mathrm{1}_{\partial(X_2, Y_2)\le r}| \mathrm{1}_{\max(\partial(X_1, X_2), \partial(Y_1, Y_2))\le \epsilon} +\mathrm{1}_{\max(\partial(X_1, X_2), \partial(Y_1, Y_2)) > \epsilon}.
 \tageq \label{eq:bound1}
\end{align*}
Note that  
\begin{align*}
\partial(X_1, Y_1) &> r+2\epsilon \text{ and} \max(\partial(X_1, X_2), \partial(Y_1, Y_2))\le \epsilon \Rightarrow \partial(X_2, Y_2)>r \\
 \partial(X_1, Y_1) &< r-2\epsilon \text{ and} \max(\partial(X_1, X_2), \partial(Y_1, Y_2))\le \epsilon \Rightarrow \partial(X_2, Y_2)\le r 
\end{align*}
in which case the first term of \eqref{eq:bound1} is zero. The same argument holds if $X_1$ and $X_2$ and $Y_1$ and $Y_2$ are switched. Hence, 
\begin{align*}
&\E|\mathrm{1}_{\partial(X_1,Y_1)\le r}-\mathrm{1}_{\partial(X_2, Y_2)\le r}| 
\mathrm{1}_{\max(\partial(X_1, X_2), \partial(Y_1, Y_2))\le \epsilon}
\\&\le \min_{i}\E\mathrm{1}_{\{\max(0,r-2\epsilon) \le \partial(X_i,Y_i) \le  r+2\epsilon\}}
\\&\le  \min_i\Big\{\pr(r-2\epsilon \le \partial(X_i,Y_i) \le r)
+\pr(r\le \partial(X_i,Y_i) \le r+2\epsilon)\Big\}.
\end{align*}
\end{proof}
\begin{proof}[Proof of \autoref{lem:weakgmc}]

Apply \autoref{lem:inddif} to find
\begin{align*}
&\Big\|\hti(\tilde{\mb{X}}_t, \tilde{\mb{X}}_{s},r) - \hti(\tilde{\mb{X}}_{m,t}, \tilde{\mb{X}}_{m,s},r)\Big\|_{\rnum,q}\\& \le \min\Big\{\pr^{1/q}(r-2\epsilon \le d_\mathscr{B}\big(\sh(\tilde{\mb{X}}_{t}), \sh(\tilde{\mb{X}}_{s}) \big) \le r)
+\pr^{1/q}(r\le d_\mathscr{B}\big(\sh(\tilde{\mb{X}}_{t}), \sh(\tilde{\mb{X}}_{s}) \big)  \le r+2\epsilon), \\& \phantom{\,\,\quad \min\Big(}\pr^{1/q}(r-2\epsilon \le d_\mathscr{B}\big(\sh(\tilde{\mb{X}}_{m,t}), \sh(\tilde{\mb{X}}_{m,s}) \big)\le r)
+\pr^{1/q}(r\le d_\mathscr{B}\big(\sh(\tilde{\mb{X}}_{m,t}), \sh(\tilde{\mb{X}}_{m,s}) \big)\le r+2\epsilon)\Big\}
\\&+\pr^{1/q}\Big(d_\mathscr{B}\big(\sh(\tilde{\mb{X}}_{t}), \sh(\tilde{\mb{X}}_{m,t}) \big)> \epsilon \cup d_\mathscr{B}\big(\sh(\tilde{\mb{X}}_{s}), \sh(\tilde{\mb{X}}_{m,s}) \big) > \epsilon\Big).
\end{align*}
Thus, using the stability theorem and \autoref{lem:dghds} 
$$d_\mathscr{B}\big(\sh(\tilde{\mb{X}}_{t}), \sh(\tilde{\mb{X}}_{m,t})\big)\le  L \partial_{S}(\mb{X}_{t},\mb{X}_{m,t}),$$ 
and thus \autoref{as:dep}(i) yields
\begin{align*}
&\sup_{t,s}\Big\|\hti(\tilde{\mb{X}}_t, \tilde{\mb{X}}_{s},r) - \hti(\tilde{\mb{X}}_{m,t}, \tilde{\mb{X}}_{m,s},r)\Big\|_{\rnum,q} \le  O\big(\log^{-2\kappa/q}((2\epsilon)^{-1}) \big)
+\sup_t\pr^{1/q}\Big(L \partial_{S}(\mb{X}_{t},\mb{X}_{m,t})> \epsilon \Big).
\end{align*}
Now take $\epsilon=\beta^m$ with $\beta=a(p)^{1/(2p)}$ so that $\log^{-2\kappa/q}((2\beta)^{-m}) = m^{-2\kappa/q} \log^{-2\kappa/ q}( (2\beta)^{-1})=O(m^{-2\kappa/q})$. Then Markov's inequality yields 
\begin{align*}
\sup_{t,s}\Big\|\hti(\tilde{\mb{X}}_t, \tilde{\mb{X}}_{s},r) - \hti(\tilde{\mb{X}}_{m,t}, \tilde{\mb{X}}_{m,s},r)\Big\|_{\rnum,q} &\le  O(m^{-2\kappa/q})
+O\Big(\frac{a(p)^{m/q}}{\beta^{p m/q}}\Big)%= O(m^{-2\kappa/q)})
%+O(\frac{a(\rho)^{m/q}}{a(\rho)^{m/(2q)}})
\\& = O(m^{-2\kappa/q})
+O(a(p)^{m/(2q)}) =O(m^{-2\kappa/q})
\end{align*}
which is summable over $m$ for $2\kappa /q>1$.  Thus, $\delta_{m,q} = O(m^{-2\kappa/q})$, and hence 
\[
\sum_j \sup_{m: m \ge j} \delta_{m,q} = \sum_j \sup_{m: m \ge j} O(m^{-2\kappa/q}) =O( \sum_j j^{-2\kappa/q}) =O(1), 
\]
proving (a). For (b), a similar argument gives
\begin{align*}
&\Big\|\hti(\tilde{\mb{X}}_{m,t},\tilde{\mb{X}}_{m,t-k},r) -\hti(\tilde{\mb{X}}_{\{t-j\},m,t},\tilde{\mb{X}}_{\{t-j\},m,t-k},r)\Big\|_{\rnum,q}
\\& \le \min\Big\{\pr^{1/q}(r-2\epsilon \le d_\mathscr{B}\big(\sh(\tilde{\mb{X}}_{m,t}), \sh(\tilde{\mb{X}}_{m,t-k}) \big) \le r)
+\pr^{1/q}(r\le d_\mathscr{B}\big(\sh(\tilde{\mb{X}}_{m,t}), \sh(\tilde{\mb{X}}_{m,t-k}) \big)  \le r+2\epsilon), \\& \,\pr^{1/q}(r-2\epsilon \le d_\mathscr{B}\big(\sh(\tilde{\mb{X}}_{\{t-j\},m,t}), \sh(\tilde{\mb{X}}_{\{t-j\},m,t-k}) \big)\le r)
+\pr^{1/q}(r\le d_\mathscr{B}\big(\sh(\tilde{\mb{X}}_{\{t-j\},m,t}), \sh(\tilde{\mb{X}}_{\{t-j\},m,t-k})\big) \le r+2\epsilon\Big\}
\\&+\pr^{1/q}\Big(d_\mathscr{B}\big(\sh(\tilde{\mb{X}}_{m,t-k}), \sh(\tilde{\mb{X}}_{\{t-j\},m,t-k}) \big)> \epsilon \cup d_\mathscr{B}\big(\sh(\tilde{\mb{X}}_{m,t}), \sh(\tilde{\mb{X}}_{\{t-j\},m,t}) \big)> \epsilon\Big)
%\\ & \le  O\big(\log^{-
%2\kappa/q}((2\epsilon)^{-1}) \big)
%+\pr^{1/q}\Big(L \partial_{S}(\mb{X}_{m,t-k},\mb{X}_{\{t-j\},m,t-k})> \epsilon  \Big) +\pr^{1/q}\Big(  L \partial_{S}(\mb{X}_{m,t},\mb{X}_{\{t-j\},m,t})> \epsilon  \Big) 
\\ & \le  O\big(\log^{-
2\kappa/q}((2\epsilon)^{-1}) \big)
+\frac{L^{1/q}}{\epsilon^{p/q}}
\Bigg\{\big(\E \partial^p_{S}(\mb{X}_{m,t-k},\mb{X}_{\{t-j\},m,t-k}) \big)^{1/q} +\big(\E \partial^p_{S}(\mb{X}_{m,t},\mb{X}_{\{t-j\},m,t})\big)^{1/q}\Bigg\}.  \tageq \label{eq:minniemouse}
\end{align*} The second term is zero if 
\begin{align*}
    t-j > t-k \text{ or } t-k-m+1 >t-j \leftrightarrow j<k  \text{ or } j> k+m-1
\end{align*}
whereas the third term will be zero if 
\[
t-m+1> t-j \leftrightarrow j > m-1.
\]
Thus the second term only plays a role if $k \le  j\le k+m-1$ whereas the third term only plays a role if $j\le m-1$. For the supremum over $m \ge 1$, only the second term drops in \eqref{eq:minniemouse}
if $j<k$.  
Therefore take 
\begin{align*}
\epsilon =
\begin{cases}
 a^{(j+1)/(2p)} & \text{ if  } j<k; 
 \\
    a^{(j-k+1)/(2p)} & \text{ if } j \ge k. 
\end{cases}
\end{align*}
Then an analogous derivation as for (a) yields the upper bound
\begin{align*}
  &  \sup_{k \ge 0}\sum_{j=0}^{k-1} O\Big( (j+1)^{-2\kappa/q}+\frac{b(p)^{j/q}}{b(p)^{j/(2q)}}\Big)  +    \sup_{k \ge 0} \sum_{j=k}^{\infty}  O\Big( (j-k+1)^{-2\kappa/q}+\frac{b(p)^{(j-k)/q}+b(p)^{(j)/q}}{b(p)^{(j-k)/(2q)}}\Big) 
    \\& \le 2 \sum_{j=1}^{\infty} O(j^{-2\kappa/q})
    +    \sup_{k \ge 0}\sum_{j=k}^{\infty}  O\Big( (j-k+1)^{-2\kappa/q}+2\frac{b(p)^{(j-k)/q}}{b(p)^{(j-k)/(2q)}}\Big) 
    \\& = O\Big(\sum_{j=1}^{\infty} j^{-2\kappa/q}\Big) = O(1).
\end{align*}
where we used that $b(p)^{j/q} \le b(p)^{(j-k)/q}$ and that by assumption $2\kappa/q>1$. 
\end{proof}

\begin{proof}[Proof of \autoref{lem:strongmc}]
It follows from \autoref{sumbound}(i) that it suffices to show
\begin{align*}
  \sup_n  \sum_j j \sup_{m: m \ge j} \delta^{\tilde{\mb{X}}^n}_{m,q} <\infty.
\end{align*}
Now an argument as in the proof of \autoref{lem:weakgmc} gives in this case that for $p=1$
\begin{align*}
\sup_n\sup_{t,s}\Big\|\hti(\tilde{\mb{X}}^n_t, \tilde{\mb{X}}^n_{s},r) - \hti(\tilde{\mb{X}}^n_{m,t}, \tilde{\mb{X}}^n_{m,s},r)\Big\|_{\rnum,q} =  O(a^{m/(2pq)}) +O(a^{m/(2q)}) = O(a^{m/(2q)}). 
\end{align*}
%Thus, using the power series we find
%\[
%\sum_{j=0}^\infty a^{j/(2q)} = \frac{1}{1-a^{1/(2q)}}.
%\]
Using the power series and taking derivatives yields
\begin{align*}
   \frac{1}{2q}\sum_{j=0}^\infty j a^{j/(2q)-1}=
\frac{1}{2q}a^{1/(2q)-1} \frac{1}{(1-a^{1/(2q)})^2}<\infty,
\end{align*}
and thus
\begin{align*}
   \sup_n \sum_j j \sup_{m: m \ge j}  \delta^{\tilde{\mb{X}}^n}_{m,q} \le \sum_{j} j a^{j/(2q)}<\infty.
\end{align*}
\end{proof}

\section{Proof of Theorem \ref{thm:limsupm} and Lemma \ref{lem:convar}}\label{sec:appc}

\begin{proof}[Proof of \autoref{thm:limsupm}]
Set $k=\flo{uT}$. We decompose the error as follows
\begin{align*}
T^{1/2}R_{m,T}(k,r)&=T^{1/2}(\mathscr{S}_T(u,r)-\E \mathscr{S}_T(u,r)) -T^{1/2}M_{m,T}(u,r)
\\& = \frac{1}{T^{3/2}}\sum_{t,s=1}^{k}\hti(\tilde{\mb{X}}^{n(T)}_t, \tilde{\mb{X}}^{n(T)}_s, r) -\hti(\tilde{\mb{X}}_t,\tilde{\mb{X}}_s, r) \tageq \label{eq:pcpprox}
\\&+\frac{1}{T^{3/2}}\sum_{t,s=1}^{k}\hti(\tilde{\mb{X}}_t,\tilde{\mb{X}}_s, r) -\hti(\xu{t}, \xu{s}, r) \tageq \label{eq:locpprox}
\\&+\frac{1}{T^{3/2}}\sum_{t,s=1}^{k}\hti(\xu{t}, \xu{s}, r) -\hti(\xum{t}, \xum{s}, r) \tageq \label{eq:mdepapprox}
 \\&+\frac{1}{T^{3/2}}\sum_{t,s=1}^{k}\hti(\xum{t}, \xum{s}, r)
-\sum_{\substack{t,s=1\\ |t-s|>m}}^k \sum_{i \in \{s,t\}}\E\Big[\hti(\xum{t}, \xum{s}, r)\bv\G_{i,m}\Big]~.
 \tageq \label{eq:mtailapprox}
\end{align*}
After showing the above terms converge to zero, it remains to control a  final error term with the part that drives the distributional properties. Namely, \begin{align*}
&\frac{1}{T^{3/2}} \sum_{\substack{t,s=1\\ |t-s|>m}}^k \E\Big[\hti(\xum{t}, \xum{s}, r)\bv\G_{t,m}\Big]
 -\frac{1}{T^{1/2}}\sum_{t=1}^{\flo{uT}}\sum_{j=0}^{m-1} P_{t}\big(Y_{m,t+j}(\frac{t+j}{T}, u, r)\big) \tageq \label{eq:error4}
\end{align*}

where we recall that  
\[Y_{m,T,t+j}(\frac{t+j}{T},u,r) =\E\Big[\frac{1}{T}\sum_{s=1}^{\flo{uT}}\E_{\tilde{\mb{X}}^\prime(\cdot)}\Big[\hti(\xum{t+j},\tilde{\mb{X}}^{\prime}_{m,0}(\frac{s}{T}), r)\Big]\bv\G_{t+j,m}\Big].
\]
We treat these terms in turn. For \eqref{eq:pcpprox},\eqref{eq:locpprox} and \eqref{eq:mdepapprox}, we apply \autoref{lem:errorcontrol} and \autoref{boundprojerrors}. More specifically, for \eqref{eq:pcpprox}, this yields under \autoref{as:mincov}
\begin{align*}&
\limsup_{m\to \infty} \limsup_{T \to \infty} \Big\| \max_{1\le  k\le T}\frac{1}{T^{3/2}}\Big\vert\sum_{t,s=1}^{k} \hti(\tilde{{\mb{X}}}^{n(T)}_t, \tilde{{\mb{X}}}^{n(T)}_{s},r) - \hti(\tilde{\mb{X}}_t, \tilde{\mb{X}}_{s},r)\Big\vert\Big\|_{\rnum,2}
\\& \lesssim \limsup_{m\to \infty} \limsup_{T \to \infty}  \sup_{k\ge 0} \sum_{j=0}^\infty \sup_t\min\Big(\nu^{\tilde{\mb{X}}}_{t,t-k}(j),C \log^{-\kappa}( (2\epsilon_T)^{-1})+ 2\sup_t\pr^{1/2}\big(\partial(\tilde{\mb{X}}_t,  \tilde{{\mb{X}}}^{n(T)}_t)> \epsilon_T\big)\Big)=0
\end{align*}
for all $\alpha_{n,T} << \epsilon_T << \alpha^{1/2}_{n,T}$.
Similarly, for \eqref{eq:locpprox}, we get
\begin{align*}
&\limsup_{m\to \infty} \limsup_{T \to \infty} \Big\| \max_{1\le  k\le T}\frac{1}{T^{3/2}}\Big\vert\sum_{t,s=1}^{k} \hti(\tilde{\mb{X}}_t, \tilde{\mb{X}}_{s},r) - \hti(\tilde{\mb{X}}_t(\frac{t}{T}), \tilde{\mb{X}}_{s}(\frac{s}{T}),r)\Big\vert\Big\|_{\rnum,2}
\\& \lesssim \limsup_{m\to \infty} \limsup_{T \to \infty}  \sup_{k\ge 0} \sum_{j=0}^\infty \sup_t\min\Big(\nu^{\tilde{\mb{X}}}_{t,t-k}(j)+\nu^{\tilde{\mb{X}}(\cdot)}_{t,t-k}(j),C \log^{-\kappa}((2\epsilon_T)^{-1})+ 2\sup_t\pr^{1/2}\big(\partial(\tilde{\mb{X}}_t,  \tilde{\mb{X}}_t(\frac{t}{T}))> \epsilon_T\big)\Big)=0~
\end{align*}
for any $\epsilon_T =T^{-\alpha}, 0<\alpha<1$. For \eqref{eq:mdepapprox}, \autoref{lem:errorcontrol} yields under \autoref{as:dep}(ii)
\begin{align*}
&\limsup_{m\to \infty} \limsup_{T \to \infty} \Big\| \max_{1\le  k\le T}\frac{1}{T^{3/2}}\Big\vert\sum_{t,s=1}^{k} \hti(\tilde{\mb{X}}_t(\frac{t}{T}), \tilde{\mb{X}}_{s}(\frac{s}{T}),r) - \hti(\tilde{\mb{X}}_{m,t}(\frac{t}{T}), \tilde{\mb{X}}_{m,s}(\frac{s}{T}),r)\Big\vert\Big\|_{\rnum,2}
\\& \lesssim \limsup_{m\to \infty} \sup_{k\ge 0} \sum_{j=0}^\infty \sup_t\min\Big(2\nu^{\tilde{\mb{X}}(\cdot)}_{t,t-k}(j),\delta_m\Big)=0. \tageq \label{eq:limmdepapprox}
\end{align*}
For \eqref{eq:mtailapprox} we use
\autoref{hoeffding}. Finally, that the error terms in \eqref{eq:error4} are negligible follows from \autoref{lem:projpart}.
\end{proof}

\begin{lemma}\label{hoeffding}
Suppose \autoref{as:dep} holds. Then 
\[
\limsup_m \limsup_{T} \Big\| \max_{k \le T} \frac{1}{T^{3/2}} \bv \sum_{t,s=1}^k \hti(\xum{t}, \xum{s}, r)- \sum_{\substack{t,s=1\\ |t-s|>m}}^k \sum_{i \in \{s,t\}} \E[\hti(\xum{t}, \xum{s}, r)|\G_{i,m}]%-\E[\hti(\xum{t}, \xum{s}, r)|\G_{s,m}] 
\bv \Big\|_{\rnum,2}=0.\]
\end{lemma}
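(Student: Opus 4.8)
The plan is to perform a Hoeffding (H\'ajek) projection onto the degenerate part of the double sum, after first separating the pairs according to whether $|t-s|$ exceeds $m$. Write $A_t := \{t-m+1,\dots,t\}$ for the block of innovations on which $\xum{t}$ depends. For the \emph{diagonal band} $|t-s|\le m$ there are at most $(2m+1)k\le (2m+1)T$ ordered pairs, and since the kernel is an indicator we have $|\hti|\le 2$; hence $\tfrac{1}{T^{3/2}}\max_{k\le T}\big|\sum_{|t-s|\le m,\,t,s\le k}\hti(\xum{t},\xum{s},r)\big|$ is bounded deterministically by $2(2m+1)T^{-1/2}$, which tends to $0$ as $T\to\infty$ for every fixed $m$. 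It remains to treat the \emph{off-diagonal} part, which equals $\sum_{|t-s|>m,\,t,s\le k} D_{t,s}$, where $D_{t,s}:=\hti(\xum{t},\xum{s},r)-\E[\hti(\xum{t},\xum{s},r)\mid \G_{t,m}]-\E[\hti(\xum{t},\xum{s},r)\mid\G_{s,m}]$.

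The structural observation driving the argument is that when $|t-s|>m$ the blocks $A_t$ and $A_s$ are disjoint, so $\xum{t}$ and $\xum{s}$ are independent. Consequently $\E[\hti(\xum{t},\xum{s},r)\mid\G_{t,m}]$ is a.s.\ a $\sigma(A_t)$-measurable function of $\xum{t}$ alone (and similarly for the other term), so $D_{t,s}$ is a completely degenerate two-block kernel: $\E[D_{t,s}\mid\sigma(A_t)]=\E[D_{t,s}\mid\sigma(A_s)]=0$ (any residual centering bias coming from the demeaning in \eqref{eq:demker} is of smaller order and is absorbed exactly as in the approximation step \eqref{eq:mdepapprox}). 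Using symmetry of $\hti$ and noting that for $s\le t$ the variable $D_{t,s}$ is $\G_t$-measurable and centered, set $G_{t,s}=2D_{t,s}\mathbf{1}\{s\le t,\ t-s>m\}$, so that $G_t:=\sum_{s\le t}G_{t,s}$ is centered and $\G_t$-measurable with $\sum_{t,s\le k}G_{t,s}=\sum_{|t-s|>m,\,t,s\le k}D_{t,s}$. Then \autoref{lem:maxmom} gives $\big\|\max_{k\le T}\big|\sum_{t,s\le k}G_{t,s}\big|\big\|_{\rnum,2}\le 4\sum_{j\ge 0}\big\|\sum_{t\le T}P_{t-j}(G_t)\big\|_{\rnum,2}$.

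Complete degeneracy now controls both the surviving range of $j$ and the size of each term. For $\ell$ with $t-j=\ell\le s$, or more generally for $\ell$ lying strictly between $\max A_s$ and $\min A_t$, the $\sigma$-algebra $\G_\ell$ carries all of the $A_s$-information and none of the $A_t$-information, so $\E[D_{t,s}\mid\G_\ell]=\E[D_{t,s}\mid\sigma(A_s)]=0$; hence $P_{t-j}(D_{t,s})=0$ except when $t-j\in A_t$, i.e.\ when $j\in\{0,\dots,m-1\}$. For each such fixed $j$ the summands $\{P_{t-j}(G_t)\}_t$ are orthogonal, so $\big\|\sum_{t\le T}P_{t-j}(G_t)\big\|_{\rnum,2}^2=\sum_{t\le T}\|P_{t-j}(G_t)\|_{\rnum,2}^2\le\sum_{t\le T}\|G_t\|_{\rnum,2}^2$. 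Expanding $\|G_t\|_{\rnum,2}^2=4\sum_{s,s'}\E[D_{t,s}D_{t,s'}]$ and using degeneracy once more (condition out $A_t$, then invoke $\E[D_{t,s}\mid\sigma(A_s)]=0$), the cross term vanishes unless $|s-s'|<m$, giving $\|G_t\|_{\rnum,2}^2=O(mt)$ and hence $\big\|\sum_{t\le T}P_{t-j}(G_t)\big\|_{\rnum,2}=O(\sqrt m\,T)$. Summing over the $m$ relevant values of $j$ and dividing by $T^{3/2}$ yields a bound of order $m^{3/2}T^{-1/2}$; combined with the diagonal-band estimate, the whole expression is $O(m^{3/2}T^{-1/2})$, so $\limsup_{T\to\infty}$ of it is $0$ for each fixed $m$, and therefore the iterated $\limsup_m\limsup_T$ vanishes.

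The main obstacle is the careful bookkeeping needed to establish complete degeneracy of $D_{t,s}$ and to extract from it the two decisive reductions: (i) that only the first $m$ projection lags $j\in\{0,\dots,m-1\}$ contribute, and (ii) that the within-$t$ covariance sum $\sum_{s,s'}\E[D_{t,s}D_{t,s'}]$ collapses to an $O(m)$-wide band. Once these are in hand, the remainder is a routine application of the maximal moment inequality \autoref{lem:maxmom} together with the boundedness of the indicator kernel $\hti$.
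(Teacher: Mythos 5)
Your proof is correct, and while it uses the same initial decomposition as the paper (split off the diagonal band $|t-s|\le m$ of the kernel sum, then handle the completely degenerate off-diagonal kernel $D_{t,s}=\hti-\E[\hti|\G_{t,m}]-\E[\hti|\G_{s,m}]$, which is the paper's $I_1+I_2$), the way you control the degenerate part is genuinely different. The paper re-indexes the off-diagonal pairs as $(p_1+mq_1,p_2+mq_2)$, uses Cauchy--Schwarz over the $m^2$ residues, exploits pairwise uncorrelatedness of $J_{p_1+mq_1,p_2+mq_2}$ across distinct block pairs $(q_1,q_2)$, and then invokes M\`oricz's maximal moment inequality to pass to $\max_{k\le T}$, paying an extra $\log_2^2(2\lfloor T/m\rfloor)$ factor and landing at $o(T^3)$ for the squared norm. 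You instead feed the degenerate rows $G_t=\sum_{s\le t-m-1}2D_{t,s}$ into the paper's own \autoref{lem:maxmom}, and use complete degeneracy twice: once to show $P_{t-j}(D_{t,s})=0$ for $j\ge m$ (since $\E[D_{t,s}|\G_\ell]=0$ whenever $\G_\ell$ is independent of $\sigma(\zeta_t,\dots,\zeta_{t-m+1})$), and once to band-limit the within-row covariances $\E[D_{t,s}D_{t,s'}]$ to $|s-s'|<m$ (conditioning on the $t$-block factorizes the product into two terms each with vanishing conditional mean). Combined with orthogonality of $\{P_{t-j}(G_t)\}_t$ at fixed $j$ this gives the explicit rate $O(m^{3/2}T^{-1/2})$, with no logarithmic loss and no appeal to M\`oricz. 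Your treatment of the diagonal band by the deterministic count $2(2m+1)T^{-1/2}$ is also simpler than the paper's projection argument and is perfectly adequate since $|\hti|\le 2$. Two small remarks: the parenthetical worry about a ``residual centering bias'' is moot, since $\E\hti=0$ by the definition \eqref{eq:demker} and hence $\E D_{t,s}=0$ exactly; and your phrase ``for $\ell$ lying strictly between $\max A_s$ and $\min A_t$'' is slightly narrower than what you actually need and use, namely that $\E[D_{t,s}|\G_\ell]=0$ for \emph{all} $\ell\le t-m$ (including $\ell\le s$, where one first conditions on $\sigma(A_s)\vee\G_\ell$). Neither affects the validity of the argument.
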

\begin{proof}
We decompose this term as 
\begin{align*}
&\Big\| \max_{k \le T} \frac{1}{T^{3/2}} \bv \sum_{t,s=1}^k\hti(\xum{t}, \xum{s}, r)-\E[\hti(\xum{t}, \xum{s}, r)|\G_{t,m}]-\E[\hti(\xum{t}, \xum{s}, r)|\G_{s,m}] \bv \Big\|_{\rnum,2}
\\& \le 
\Big\| \max_{k \le T} \frac{1}{T^{3/2}} \bv\sum_{t,s=1}^{k}\hti(\xum{t}, \xum{s}, r)- \sum_{\substack{t,s=1\\ |t-s|>m}}^k\hti(\xum{t}, \xum{s}, r) \bv \Big\|_{\rnum,2}
\\&+
 \Big\| \max_{k \le T} \frac{1}{T^{3/2}} \bv \sum_{\substack{t,s=1\\ |t-s|>m}}^k\hti(\xum{t}, \xum{s}, r)-\E[\hti(\xum{t}, \xum{s}, r)|\G_{t,m}]-\E[\hti(\xum{t}, \xum{s}, r)|\G_{s,m}] \bv \Big\|_{\rnum,2}
 \\&=: I_1+II_2.
\end{align*}
We treat $I_1$ first. Using \[\sum_{\substack{t,s=1\\ |t-s|\le m}}=\sum_{t=1}^k \sum_{s=(t-m) \vee 1}^{(t+m) \wedge k} =\sum_{t=1}^k \sum_{s=(t-m) \vee 1}^t +\sum_{t=1}^k \sum_{s=t+1}^{(t+m) \wedge k} =\sum_{t=1}^k \sum_{s=(t-m) \vee 1}^t +\sum_{s=2}^k \sum_{t=s-m\vee 1}^{(s-1) }.\]
and \autoref{lem:maxmom}, and a similar argument as in the proof of \autoref{lem:errorcontrol}, yields that for any fixed $m$
\begin{align*}
\Big\| \max_{k \le T}  \bv\sum_{t=1}^k \sum_{s=t-m}^t \hti(\xum{t}, \xum{s}, r) \bv \Big\|_{\rnum,2} &\lesssim \sum_{j=0}^{\infty} \big( \E\Big\|    \sum_{t=1}^T P_{t-j} \Big( \sum_{s=t-m}^t \hti(\xum{t}, \xum{s}, r)\Big)  \Big\|^2\big)^{1/2}
\\& =O(T^{1/2}m)=o(T^{3/2}). \tageq \label{eq:mapprox1term}
\end{align*}
To treat $I_2$,  write 
\[
J_{t,s}= \hti(\xum{t}, \xum{s}, r)-\E[\hti(\xum{t}, \xum{s}, r)|\G_{t,m}]-\E[\hti(\xum{t}, \xum{s}, r)|\G_{s,m}]
\]
and let $p_1, p_2 = 1, \ldots, m$ and $q_1 \neq q_2 \le \flo{k/m}$. The Cauchy Schwarz inequality yields
\begin{align*}
\E\bv \max_{k \le T} \bv \sum_{t,s=1: |t-s| > m}^{k}J_{t,s}\bv \bv^2 &=\E\bv \max_{k \le T}\bv \sum_{1\le p_1, p_2 \le m} \sum_{1\le q_1 \neq q_2\le \flo{k/m}} J_{p_1+m q_1, p_2+m q_2 } \bv^2  \bv
\\& \le m^2 \E\bv \max_{k \le T} \sum_{1\le p_1, p_2 \le m}\bv \sum_{1\le q_1 \neq q_2\le \flo{k/m}} J_{p_1+m q_1, p_2+m q_2 } \bv^2 \bv 
\\& \le m^2 \sum_{1\le p_1, p_2 \le m}\E\bv \max_{k \le T} \bv \sum_{1\le q_1 \neq q_2\le \flo{k/m}} J_{p_1+m q_1, p_2+m q_2 } \bv^2 \bv~. 
\end{align*}
For the term inside the expectation we note that it can be verified that $J_{p_1+mq_1, p_2+m q_2}$ and $J_{r_1+m s_1, r_2+m s_2}$ are uncorrelated for different pairs $(q_1, q_2) \neq (s_1, s_2)$ 
\begin{align*}
  \E \bv \sum_{1\le q_1 < q_2\le \flo{k/m}} J_{p_1+m q_1, p_2+m q_2 } \bv^2 & \le  \E \bv \sum_{q_2=2}^{\flo{k/m}} \sum_{q_1=1}^{q_2-1} J_{p_1+m q_1, p_2+m q_2 } \bv^2 
 \\& =  \sum_{q_2=2}^{\flo{k/m}} \sum_{q_1=1}^{q_2-1} \E \bv J_{p_1+m q_1, p_2+m q_2 } \bv^2 \\&\le C \sum_{q_2=2}^{\flo{k/m}} (q_2-1)
 % = \le C \sum_{i=1}^{\flo{k/m}-1} i 
 \\&= C \frac{(\flo{k/m}-1)\flo{k/m}}{2}~.
\end{align*}
Hence, if we define our function $ S_{b,n}=
 \sum_{l=b+1}^{b+n} \zeta_l$ where $\zeta_l = \sum_{q_2=b+2}^{l-1}$. Then we find that 
 \[
 \E|S_{b,n}|^2 \le g(F_{b,n}): = C \sum_{q_2=b+1}^{b+\flo{k/m}} \sum_{q_1=b}^{q_2-1} \le C \sum_{i=1}^{\flo{k/m}} i  
 \]
 and it follows from theorem 3 in \cite{Mor76}
 \begin{align*}
 \E\bv \max_{1 \le k \le T} \bv \sum_{1\le q_1 < q_2\le \flo{k/m}} J_{p_1+m q_1, p_2+m q_2 } \bv^2 \bv \le C \log_2(2 \flo{T/m})^2 \flo{T/m}^2~. \tageq\label{eq:mapprox2term}
 \end{align*}
 Therefore, for fixed $m$
\[
\E\bv \max_{k \le T} \bv \sum_{t,s=1: |t-s| > m}^{k}J_{t,s}\bv \bv^2 %m^2 \sum_{1\le p_1, p_2 \le m}\E\bv \max_{k \le T} \bv \sum_{1\le q_1 \neq q_2\le \flo{k/m}} J_{p_1+m q_1, p_2+m q_2 } \bv^2 \bv
 \le  C m^2 \log_2(2 \flo{T/m})^2 T^2 =o(T^3).  \]
\end{proof}

\begin{lemma}\label{lem:projpart}
\begin{align*}
\limsup_{m \to \infty} \lim_{T \to \infty}  \pr\Big(\sup_{u }\frac{1}{T^{3/2}} \bv \sum_{\substack{t,s=1\\ |t-s|>m}}^{\flo{uT}} \E\Big[\hti(\xum{t}, \xum{s}, r)\bv\G_{t,m}\Big]
  -\sum_{\substack{t=1}}^{\flo{uT}} \sum_{j=0}^{m-1} P_{t}\big(Y_{m,T,t+j}(\frac{t+j}{T},u,r) \big)\bv\ge \epsilon \Big)=0.
\end{align*}
\end{lemma}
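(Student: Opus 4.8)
The plan is to show that for each fixed $m$ the quantity inside the probability is bounded by a deterministic term of order $m^2T^{-1/2}$, uniformly in $u$ and $r$; the assertion then follows at once, the outer $\limsup_{m\to\infty}$ amounting to $\lim_m 0$. The argument uses only the $m$-dependence of the truncated auxiliary process together with elementary counting. First I would evaluate the conditional expectations: since $\xum{t}$ is $\sigma(\zeta_t,\dots,\zeta_{t-m+1})$-measurable by \autoref{prop1}, for $|t-s|>m$ the variable $\xum{s}$ is independent of $\G_{t,m}$, and using that the $(\zeta_t)$ are iid and $h$ is symmetric one obtains, for $1\le t,s\le\flo{uT}$ with $|t-s|>m$,
\[
\E\big[\hti(\xum{t},\xum{s},r)\,\big|\,\G_{t,m}\big]=\psi_m\big(\xum{t},s/T,r\big),\qquad \psi_m(\xi,w,r):=\E_{\tilde{\mb{X}}^{\prime}_{m,0}(w)}\big[\hti\big(\xi,\tilde{\mb{X}}^{\prime}_{m,0}(w),r\big)\big],
\]
where $\psi_m$ is bounded by an absolute constant and, by the centering \eqref{eq:demker}, $\E\,\psi_m(\xum{t},s/T,r)=0$. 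Then I would complete the restricted sum, $\sum_{|t-s|>m}=\sum_{\text{all}}-\sum_{|t-s|\le m}$: the diagonal band contains at most $(2m+1)\flo{uT}$ bounded terms and hence contributes $O(mT^{-1/2})$ after the $T^{-3/2}$ normalization, uniformly in $u,r$.

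Next I would reduce the martingale term. Because $\xum{t+j}$ is $\G_{t+j,m}$-measurable the outer conditional expectation in the definition of $Y_{m,T,t+j}$ is superfluous, so $Y_{m,T,t+j}(\tfrac{t+j}{T},u,r)=\tfrac1T\sum_{s=1}^{\flo{uT}}\psi_m(\xum{t+j},s/T,r)=:Z_{t+j}$, which is $\sigma(\zeta_{t+j-m+1},\dots,\zeta_{t+j})$-measurable; consequently the telescoping identity $\sum_{j=0}^{m}P_{\tau-j}(Z_\tau)=\E[Z_\tau\,|\,\G_\tau]-\E[Z_\tau\,|\,\G_{\tau-m-1}]=Z_\tau$ holds. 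Reindexing $\tau=t+j$ in $\sum_{t\le\flo{uT}}\sum_{j=0}^{m}P_t(Z_{t+j})$, for $m<\tau\le\flo{uT}$ the full range of $j$ occurs and the contribution is $Z_\tau$, while the $O(m)$ values of $\tau$ near $1$ and near $\flo{uT}$ (where the admissible range of $j$ is truncated) contribute boundedly — and for $\flo{uT}\le m$ all quantities are separately $O(m^2T^{-1/2})$ while the restricted double sum is empty. Hence after the $T^{-1/2}$ normalization the martingale term equals $T^{-3/2}\sum_{t,s\le\flo{uT}}\psi_m(\xum{t},s/T,r)$ up to $O(mT^{-1/2})$.

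Subtracting, the common double sum $T^{-3/2}\sum_{t,s\le\flo{uT}}\psi_m(\xum{t},s/T,r)$ cancels, leaving a quantity bounded by $O(m^2T^{-1/2})$ uniformly in $u,r$; letting $T\to\infty$ for fixed $m$ drives the probability to $0$, and then so does $\limsup_m$. The step requiring care — and the only genuinely delicate point — is the cancellation of the centering constants $\E Z_\tau$: it rests on $\hti$ being demeaned so that $\E[\hti(\xi_1,\xi_2,r)]=0$ for independent draws $\xi_1,\xi_2$ from the stationary marginals of the auxiliary process, which is precisely the role of the centering at $\E\mathscr{S}_T$ in \autoref{thm:limfixr} propagated through the decomposition \eqref{eq:pcpprox}–\eqref{eq:mtailapprox}; absent exact centering the residual $T^{-3/2}\sum_{t,s}\E\,\psi_m(\xum{t},s/T,r)$ would be of order $T^{1/2}$ rather than negligible, so one must match the diagonal-band correction against the boundary corrections from the reindexing with some attention.
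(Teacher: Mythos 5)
Your proof is correct and follows essentially the same route as the paper's: evaluate the conditional expectation through an independent copy, complete the band-restricted sum to the full double sum, telescope $Z_\tau=\sum_{j=0}^{m}P_{\tau-j}(Z_\tau)$ using that $Z_\tau$ is zero-mean and $\sigma(\zeta_\tau,\ldots,\zeta_{\tau-m+1})$-measurable, and control the reindexing boundary terms. The only differences are cosmetic: you replace the paper's maximal-inequality and uniform-integrability arguments (\autoref{lem:maxmom} and \autoref{lem:shift}) with crude deterministic $O(mT^{-1/2})$ and $O(m^2T^{-1/2})$ bounds, which suffice since $m$ is fixed as $T\to\infty$, and you correctly read the martingale term with the $T^{-1/2}$ normalization of \eqref{eq:error4} rather than the $T^{-3/2}$ the lemma's display literally suggests.
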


\begin{proof}[Proof of \autoref{lem:projpart}]

We recall the notation \eqref{eq:Ymt}, and make a few remarks. Observe that $\xum{s}$ is independent of $\G_{t,m}$ for $|s-t|>m$ and $\xum{t}$ is measurable with respect to  $\G_{t,m}$. Let $(\tilde{\mb{X}}^\prime_t(u): t\in \znum, u\in[0,1])$ be an independent copy of the  auxiliary process $(\mb{X}_t(u): t\in \znum, u\in[0,1])$. Then, almost surely, for $|s-t|>m$ 
\begin{align*}
\E\Big[\hti(\xum{t}, \xum{s}, r)\bv\G_{t,m}\Big] &=
\E\Big[\E_{\xum{s}}\big[\hti(\xum{t}, \xum{s}, r)\big]\bv\G_{t,m}\Big] 
%\\& = \E\Big[\E_{\tilde{\mb{X}}^\prime_{m,s}(\frac{s}{T})}\big[\hti(\xum{t}, \xump{s}, r)\big]\bv \G_{t,m}\Big]
\\&= \E\Big[\E_{\tilde{\mb{X}}^\prime_{m,0}(\frac{s}{T})}\big[\hti(\xum{t},\tilde{\mb{X}}^\prime_{m,0}(\frac{s}{T}), r)\big]\bv\G_{t,m}\Big].
\end{align*}

It follows similar to the proof of term $I_1$ in the proof of \autoref{hoeffding} that
\[
\limsup_{m \to \infty} \lim_{T \to \infty}\Big\| \max_{k \le T} \frac{1}{T^{3/2}} \bv \big(\sum_{t,s=1}^{k}-\sum_{\substack{t,s=1\\ |t-s|>m}}^k\big)\E\Big[\E_{\tilde{\mb{X}}_{m,0}(\frac{s}{T})}[\hti(\xum{t},\tilde{\mb{X}}^{\prime}_{m,0}(\frac{s}{T}), r)]\bv\G_{t,m}\Big]  \bv \Big\|_{\rnum,2}=0.
\]
Thus  with $Y_{m,T,t}(\frac{t}{T},u,r) =\E\Big[\frac{1}{T} \sum_{s=1}^{\flo{uT}} \E_{\tilde{\mb{X}}^\prime_{m,0}(\frac{s}{T})}\Big[\hti(\xum{t},\tilde{\mb{X}}^\prime_{m,0}(\frac{s}{T}), r)\Big]\Big\vert\G_{t,m}\Big]$, we have 
\begin{align*}
\frac{1}{T^{3/2}} \sum_{t,s=1}^{\flo{uT}}\E\Big[\E_{\tilde{\mb{X}}^\prime_{m,0}(\frac{s}{T})}\Big[\hti(\xum{t},\tilde{\mb{X}}^\prime_{m,0}(\frac{s}{T}), r)\Big]\Big\vert\G_{t,m}\Big] &= \frac{1}{T^{1/2}} \sum_{t=1}^{\flo{uT}}{Y}_{m,T,t}(\frac{t}{T},u,r).
\end{align*}
Therefore, it suffices to control
\begin{align*}
\frac{1}{T^{1/2}} \sum_{t=1}^{\flo{uT}}Y_{m,T,t}(\frac{t}{T},u,r)
 -\frac{1}{T^{1/2}}\sum_{\substack{t=1}}^{\flo{uT}} \sum_{j=0}^{m-1} P_{t}\Big({Y}_{m,T,t+j}(\frac{t+j}{T},u,r) \Big)~.
\end{align*}
Note that for all $u,v\in [0,1]$, $Y_{m,T,t}(v,u,r)$ is zero mean, $\G_{t,m}$-measurable and is independent of $\G_{l}$ where $l <  t-m+1$. Hence, almost surely
\[
T^{-1/2} \sum_{t=1}^{\flo{uT}}Y_{m,T,t}(\frac{t}{T},u,r) =T^{-1/2} \sum_{t=1}^{\flo{uT}}\sum_{j=0}^{m-1}P_{t-j}\Big(Y_{m,T,t}(\frac{t}{T},u,r) \Big).
\] 
 \autoref{lem:projpart} now follows from \autoref{cor:shiftfixr}, which controls the shift in rescaled time.  \end{proof}
\begin{lemma}\label{lem:shift}
\begin{align*}
& \pr\Big(\sup_{u \in [0,1]} \frac{1}{\sqrt{T}}\bv \sum_{t=1}^{\flo{uT}}\sum_{j=0}^{m-1} P_{t-j}\big(Y_{m,T,t}(\frac{t}{T},u,r)\big)-\sum_{t=1}^{\flo{uT}}\sum_{j=0}^{m-1} P_{t}\big(Y_{m,T,t+j}(\frac{t+j}{T},u,r)\big)  \bv >\epsilon\Big) 
\\&
\lesssim T m^2 \frac{m^{2\ell}}{T^{\ell/2}} \max_{j=0,\ldots,m-1}  \E\Big[ \big(\sup_{u,v \in [0,1]}\bv
P_0\big(Z_{m,j}(u,v)\big)\bv\Big)^\ell \mathrm{1}_{\big\{\sup_{u,v \in [0,1]}\bv P_0\big(Z_{m,j}(u,v)\big)\bv > \sqrt{T}/m^2 \epsilon\big\}}\Big] 
\end{align*}
%$\Big\|\sup_{u,v \in [0,1]}\bv P_0\big(Z_{m,j}(u,v)\big)\bv\Big\|_{\rnum,\ell} \le 1$.
where $Z_{m,j}(u,v)$ is defined in \eqref{eq:zmt}.
\end{lemma}
\begin{proof}[Proof of \autoref{lem:shift}]
Making the identification $k=\flo{u T}$, and using a change of variables
\begin{align*}
\sum_{t=1}^{k} P_{t-j}\big(Y_{m,T,t}(\frac{t}{T},\frac{k}{T},r)\big) 
%l=t-j
&=\sum_{t=1-j}^{0} P_{t}\big(Y_{m,T,t+j}(\frac{t+j}{T},\frac{k}{T},r)\big) +\sum_{t=1}^{k-j} P_{t}\big(Y_{m,T,t+j}(\frac{t+j}{T},\frac{k}{T},r)\big) 
\\&=\sum_{t=1}^{j} P_{t-j}\big(Y_{m,T,t}(\frac{t}{T},\frac{k}{T},r)\big) +\sum_{t=1}^{k-j} P_{t}\big(Y_{m,T,t+j}(\frac{t+j}{T},\frac{k}{T},r)\big),
\end{align*}
from which we find
\begin{align*}
&\bv \sum_{t=1}^{k}\sum_{j=0}^{m-1} P_{t-j}\big(Y_{m,T,t}(\frac{t}{T},\frac{k}{T},r)\big)-\sum_{t=1}^{k}\sum_{j=0}^{m-1} P_{t}\big(Y_{m,T,t+j}(\frac{t+j}{T},\frac{k}{T},r)\big)\bv \\&\le \sum_{j=0}^{m-1}  \sum_{t=1}^j\bv P_{t-j}\big(Y_{m,T,t}(\frac{t}{T},u,r)\big)\bv +\sum_{j=0}^{m-1}  \sum_{t=k-j+1}^{k}\bv P_{t}\big(Y_{m,T,t+j}(\frac{t+j}{T},\frac{k}{T},r)\big)\bv
\\&\lesssim m^2 \max_{j=0,\ldots,m-1}\max_{t=1,\ldots,j}\bv P_{t-j}\big(Y_{m,T,t}(\frac{t}{T},\frac{k}{T},r)\big)\bv + m^2 \max_{j=0,\ldots,m-1}\max_{t=k-j+1,\ldots,k}\bv P_{t}\big(Y_{m,T,t+j}(\frac{t+j}{T},\frac{k}{T},r)\big)\bv.
\end{align*}
For the next step, denote
 \[Z_{m,t}(\frac{t}{T},\frac{s}{T}):= \E\Big[\E_{\tilde{\mb{X}}^\prime_{m,0}(\frac{s}{T})}\Big[\hti(\xum{t},\tilde{\mb{X}}^\prime_{m,0}(\frac{s}{T}), r)\Big]\Big\vert\G_{t,m}\Big]. \tageq\label{eq:zmt}\]
Observe $Y_{m,T,t}(\frac{t}{T},u,r) =\frac{1}{T} \sum_{s=1}^{\flo{u T}} Z_{m,t}(\frac{t}{T},\frac{s}{T})$ and that  
 $P_{t}\big(Y_{m,T,t+j}(\frac{t+j}{T},u,r) \big)
 \overset{d}{=} P_{0}\big({Y}_{m,T,j}(\frac{t+j}{T},u,r)\big)$. A union bound and $b^\ell\, \pr(|X| \ge b) = b^\ell\cdot \E[\mathrm{1}_{\{|X|\ge b\}}] \le   \E[|X|^\ell \mathrm{1}_{\{|X| \ge b\}}] $ therefore yield
 \begin{align*}
&\pr\Big(\max_{1\le k \le T} \frac{1}{\sqrt{T}}\sum_{j=0}^{m-1}  \sum_{t=k-j+1}^{k}\bv P_{t}\big(Y_{m,T,t+j}(\frac{t+j}{T},\frac{k}{T},r) \big)\bv >\epsilon\Big)  
\\& \le \pr\Big(\max_{1\le k \le T} \frac{1}{T^{3/2}}\sum_{j=0}^{m-1}  \sum_{t=k-j+1}^{k}\sum_{s=1}^T \bv P_{t}\big(Z_{m,t+j}(\frac{t+j}{T},\frac{s}{T},r) \big)\bv >\epsilon\Big)  
\\& \le \pr\Big(\max_{1\le k \le T} \frac{1}{T^{1/2}}m^2 \max_{j=0,\ldots,m-1}\max_{t= 1 \vee k-j+1,\ldots,k} \sup_s \bv P_{t}\big(Z_{m,t+j}(\frac{t+j}{T},\frac{s}{T},r) \big)\bv >\epsilon\Big)  
   \\& \lesssim T m^2 \frac{m^{2\ell}}{T^{\ell/2}} \max_{j=0,\ldots,m-1}  \E\Big[ \big(\sup_{u,v \in [0,1]}\bv
P_0\big(Z_{m,j}(u,v)\big)\bv\Big)^\ell \mathrm{1}_{\big\{\sup_{u,v \in [0,1]}\bv P_0\big(Z_{m,j}(u,v)\big)\bv > \sqrt{T}/m^2 \epsilon\big\}}\Big] \to 0
\end{align*}
 as $T \to \infty$, for any fixed $m$ and $\ell \ge 2$, since
\[\Big\|\sup_{u,v \in [0,1]}\bv P_0\big(Z_{m,j}(u,v)\big)\bv\Big\|_{\rnum,\ell}\le \Big\|\sup_{u,v \in [0,1]}\bv\E_{\tilde{\mb{X}}_{m,0}^\prime(v)}\hti(\tilde{\mb{X}}_{m,0}(u),\tilde{\mb{X}}^\prime_{m,0}(v),r)\bv\Big\|_{\rnum,\ell} \le 1.\]
\end{proof}
We obtain the following immediately by taking $\ell \ge 2$ in \autoref{lem:shift}. 
\begin{Corollary}
    \label{cor:shiftfixr}
\begin{align*}
\limsup_{m \to \infty} \limsup_{T \to \infty} \pr\Big(\sup_{u \in [0,1]} \frac{1}{\sqrt{T}}\bv \sum_{t=1}^{\flo{uT}}\sum_{j=0}^{m-1} P_{t-j}\big(Y_{m,T,t}(\frac{t}{T},u,r)\big)-\sum_{t=1}^{\flo{uT}}\sum_{j=0}^{m-1} P_{t}\big(Y_{m,T,t+j}(\frac{t+j}{T},u,r)\big)  \bv >\epsilon\Big) = 0.  
\end{align*}
\end{Corollary}

\begin{proof}[Proof \autoref{lem:convar}]
We recall the notation
\[Y_{m,T,t}(\frac{t}{T},u,r) =\E\Big[\E_{\tilde{\mb{X}}^{\prime}_{m,0}(\frac{s}{T})}\Big[\frac{1}{T} \sum_{s=1}^{\flo{uT}}\hti(\xum{t},\tilde{\mb{X}}^{\prime}_{m,0}(\frac{s}{T}), r)\Big]\Big\vert\G_{t,m}\Big].
\]

Define $\mc{E}_{L,l,k}= \Big\{t: \frac{t}{k}\in \big(\frac{l-1}{2^L},\frac{l}{2^L}\big]\Big\}$ as well as
\[
\Gamma_{m,T,t}\Big(\frac{t+j}{T}, \frac{t+j^\prime}{T}\Big)= P_{t}\Big(Y_{m,T,t+j}\big(\frac{t+j}{T},\frac{k}{T},r\big)\Big)  P_{t}\Big(Y_{m,T,t+j^\prime}\big(\frac{t+j^\prime}{T},\frac{k}{T},r\big)\Big)
\]
and 
\[
\Gamma_{m,T,t,j,j^\prime}\Big(v, w\Big)= P_{t}\Big(Y_{m,T,t+j}\big(v,\frac{k}{T},r\big)\Big)  P_{t}\Big(Y_{m,T,t+j^\prime}\big(w,\frac{k}{T},r\big)\Big).
\]
Then the triangle inequality yields
\begin{align*}
&\E\Bigg\vert\frac{1}{k}\sum_{t=1}^{k}\E\Big[ \Gamma_{m,T,t}\Big(\frac{t+j}{T}, \frac{t+j^\prime}{T}\Big) \vert \G_{t-1}\Big]-\frac{1}{2^L}\sum_{l=1}^{2^L} \frac{1}{|\mc{E}_{L,l,k}|}\sum_{t \in \mc{E}_{L,l,k}}\E\Big[ \Gamma_{m,T,t,j,j^\prime}\Big(\frac{l}{2^L}u,\frac{l}{2^L}u\Big)\Big\vert^2 \Big \vert \G_{t-1}\Big] \Bigg\vert
 \\& 
\le \E\Bigg\vert\frac{1}{k}\sum_{t=1}^{k}\E\Big[  \Gamma_{m,T,t}\Big(\frac{t+j}{T}, \frac{t+j^\prime}{T}\Big) \Big \vert \G_{t-1}\Big]-\frac{1}{2^L}\sum_{l=1}^{2^L} \frac{1}{|\mc{E}_{L,l,k}|}\sum_{t \in \mc{E}_{L,l,k}}\E\Big[  \Gamma_{m,T,t}\Big(\frac{t+j}{T}, \frac{t+j^\prime}{T}\Big) \Big \vert \G_{t-1}\Big]\Bigg\vert  \tageq \label{erI}
\\& +\E\Bigg\vert \frac{1}{2^L}\sum_{l=1}^{2^L} \frac{1}{|\mc{E}_{L,l,k}|}\sum_{t \in \mc{E}_{L,l,k}}\Bigg(\E\Big[  \Gamma_{m,T,t}\Big(\frac{t+j}{T}, \frac{t+j^\prime}{T}\Big) \Big \vert \G_{t-1}\Big]-\E\Big[ \Gamma_{m,T,t,j,j^\prime}\Big(\frac{l}{2^L}u,\frac{l}{2^L}u\Big) \Big \vert \G_{t-1}\Big]\Bigg)\Bigg\vert. \tageq \label{erII}
\end{align*}
Note that if $2^L\ge k$, then the error \eqref{erI} is zero. If $k >2^L$, the points are not evenly distributed across the bins. The number of points per bin is $
\flo{\frac{k}{2^L}}$ or $\ceil{\frac{k}{2^L}}$ and we can therefore bound \eqref{erI} by
\begin{align*}
 \E\Big \vert\sum_{l=1}^{2^L} \Big(\frac{|\mc{E}_{L,l,k}|}{k}-\frac{1}{2^L}\Big) 
 \frac{1}{|\mc{E}_{L,l,k}|}\sum_{t \in \mc{E}_{L,l,k}} (\cdot)\Big \vert
& \le \sum_{l=1}^{2^L} \Big\vert\frac{|\mc{E}_{L,l,k}|}{k}-\frac{1}{2^L}\Big\vert \max_{t=1,\ldots, T}\max_{j,j^\prime =0,\ldots, m-1} \E|\Gamma_{m,T,t}\Big(\frac{t+j}{T}, \frac{t+j^\prime}{T}\Big)|
 \\& \le 
2^L\Big\vert\frac{2^L(k/2^L\pm 1) - k}{k 2^L}\Big\vert \max_{t=1,\ldots, T}\max_{j,j^\prime =0,\ldots, m-1} \E|\Gamma_{m,T,t}\Big(\frac{t+j}{T}, \frac{t+j^\prime}{T}\Big)|
\\&=O(2^L/k). 
\end{align*}
Since, by the Cauchy Schwarz inequality,
\begin{align*}
&\E\bv \Gamma_{m,T,t}\Big(\frac{t+j}{T}, \frac{t+j^\prime}{T}\Big)\bv
%\\& = \E \bv P_{t}\Big(Y_{m,T,t+j}\big(\frac{t+j}{T},\frac{k}{T},r\big)\Big)  P_{t}\Big(Y_{m,T,t+j^\prime}\big(\frac{t+j^\prime}{T},\frac{k}{T},r\big)\Big) \bv
%\\ &
\le \sqrt{\E \bv P_{t}\Big(Y_{m,T,t+j}\big(\frac{t+j}{T},\frac{k}{T},r\big)\Big) \bv^2}  \sqrt{\E \bv P_{t}\Big(Y_{m,T,t+j^\prime}\big(\frac{t+j^\prime}{T},\frac{k}{T},r\big)\Big) \bv^2}
\end{align*}
and by the contraction property
\begin{align*}
\E \bv P_{t}\Big(Y_{m,T,t+j}\big(\frac{t+j}{T},\frac{k}{T},r\big)\Big) \bv^2
%&\le \E\bv \E\Big[\E_{\tilde{\mb{X}}^{\prime}_{m,0}(\frac{s}{T})}\Big[\frac{1}{T} \sum_{s=1}^{k}\hti(\tilde{\mb{X}}_{m,j}(\frac{t}{T}),\tilde{\mb{X}}^{\prime}_{m,0}(\frac{s}{T}), r)\Big]\Big\vert\G_{j,m}\Big]\bv^2
%\\
& \le \sup_{u,v \in [0,1]}\E\bv \E_{\tilde{\mb{X}}^{\prime}_{m,0}(v)}\Big[\frac{1}{T} \sum_{s=1}^{k}\hti(\tilde{\mb{X}}_{m,0}(u),\tilde{\mb{X}}^{\prime}_{m,0}(v), r)\Big]\bv^2
\\& =O(\frac{k^2}{T^2}).
\end{align*}
For \eqref{erII}, we simply observe that $t \in \mc{E}_{L,l,k}$ implies
$\frac{t}{T} \in \Big(\frac{l-1}{2^L}\frac{k}{T}, \frac{l}{2^L}\frac{k}{T}\Big]$
and since $k=\flo{uT}$, 
\[
\bv\frac{t+j}{T}-\frac{l}{2^L}u \bv \le \frac{j}{T} +\bv \frac{l\pm 1}{2^L}\frac{\flo{uT}}{T} -\frac{l}{2^L}\frac{\flo{u T}}{T} \frac{u T}{\flo{uT}}\bv \le  \frac{j}{T}+ \bv \frac{1}{2^L}\frac{\flo{u T}}{T} \bv+ \bv\frac{l}{2^L}( \frac{u T}{\flo{uT}}-1)\bv \le \frac{m}{T}+\frac{1}{2^L}+\frac{1}{T}
\]
where it was used that $\sup_{u \in (1/T,1]}| \frac{u T}{\flo{uT}}-1|$. This is of order $O(\frac{m}{T}+\frac{1}{2^L})$. Therefore, an application of \autoref{lem:inddif}  ensures \eqref{erII} goes to zero as $T \to \infty$ and $L\to \infty$. Recall the notation defined in \eqref{eq:zmt} 
\[Z_{m,t}(u,v)=\E\Big[\E_{\tilde{\mb{X}}^{\prime}_{m,0}(v)}\Big[\hti(\tilde{\mb{X}}_{m,t}(u),\tilde{\mb{X}}^{\prime}_{m,0}(v), r)\Big]\Big\vert\G_{t,m}\Big]\]
and denote
\[
g(\mb{X}_{m,t}(v),u,r):=\int_0^u\E_{\tilde{\mb{X}}^\prime_{m,0}(w)}[\hti(\tilde{\mb{X}}_{m,t}(v),\tilde{\mb{X}}^\prime_{m,0}(w), r)] dw.
\]
From the above, we thus find
\begin{align*}
&\lim_{L\to \infty} \lim_{T\to \infty} \sum_{j,j^\prime=0}^{m-1}\frac{\flo{uT}}{T}\frac{1}{\flo{uT}} \sum_{t=1}^{\flo{uT}}\E\Big[  \Gamma_{m,T,t}\Big(\frac{t+j}{T}, \frac{t+j^\prime}{T}\Big) \Big \vert \G_{t-1}\Big]
\\&=
\lim_{L\to \infty} \lim_{T\to \infty}\frac{\flo{uT}}{T}\sum_{j,j^\prime=0}^{m-1}\frac{1}{2^L}\sum_{l=1}^{2^L}  \frac{1}{|\mc{E}_{L,l,\flo{uT}}|}\sum_{t \in \mc{E}_{L,l,\flo{uT}}}
 \E\Big[ \frac{1}{T^2} \sum_{s,s^\prime=1}^{\flo{uT}} 
 P_{t}\big(Z_{m,t+j}(\frac{l}{2^L}u,\frac{s}{T})\big) \, P_{t}\big(Z_{m,t+j^\prime}(\frac{l}{2^L}u,\frac{s^\prime}{T})\big) \bv \G_{t-1}\Big].
\end{align*}
By the ergodic theorem and a Riemann approximation 
\begin{align*}
&\frac{1}{|\mc{E}_{L,l,\flo{uT}}|}\sum_{t \in \mc{E}_{L,l,\flo{uT}}}
 \E\Big[\frac{1}{T^2} \sum_{s,s^\prime=1}^{\flo{uT}} P_{t}\big(Z_{m,t+j}(\frac{l}{2^L}u,\frac{s}{T})\big) \, P_{t}\big(Z_{m,t+j^\prime}(\frac{l}{2^L}u,\frac{s^\prime}{T})\big) \bv \G_{t-1}\Big]
 \\& \overset{p}{\to} \E\Big[u \int_0^1  P_{0}\big(Z_{m,j}(\frac{l}{2^L}u,xu)\big) dx 
 \cdot u\int_0^1 P_{0}\big(Z_{m,j^\prime}(\frac{l}{2^L}u, x^\prime u)\big) d x^\prime\Big]. 
\end{align*}
Hence, a change of variables yields
\begin{align*}
&\lim_{L\to \infty} \lim_{T\to \infty} \sum_{j,j^\prime=0}^{m-1}\E\Big[\frac{\flo{uT}}{T}\frac{1}{\flo{uT}} \sum_{t=1}^{\flo{uT}}\E\Big[  \Gamma_{m,T,t}\Big(\frac{t+j}{T}, \frac{t+j^\prime}{T}\Big) \Big \vert \G_{t-1}\Big]
\\&= \lim_{L\to \infty} \sum_{j,j^\prime=0}^{m-1}\frac{u}{2^L}\sum_{l=1}^{2^L}\E\Big[ \int_0^u  P_{0}\big(Z_{m,j}(\frac{l}{2^L}u,v)\big) dv 
 \cdot \int_0^u P_{0}\big(Z_{m,j^\prime}(\frac{l}{2^L}u,v^\prime)\big) d v^\prime\Big]  
\\& =\sum_{j,j^\prime=0}^{m-1}\int_0^u \E\Big[ \int_0^u  P_{0}\big(Z_{m,j}(\eta,v)\big) dv 
 \cdot \int_0^u P_{0}\big(Z_{m,j^\prime}(\eta,v^\prime)\big) d v^\prime\Big] d\eta. 
\end{align*}
Setting $g(\tilde{\mb{X}}_{j}(\eta),u,r)=\int_0^u \E_{\tilde{\mb{X}}^\prime_0(v)}[\hti(\tilde{\mb{X}}_{j}(\eta), \tilde{\mb{X}}^\prime_{0}(v),r)]dv$ and using \eqref{eq:limmdepapprox}, the dominated convergence theorem allows us to conclude that
\begin{align*}
 \limsup_{m \to \infty} \int_0^u \E \bv\sum_{j=0}^{m-1} \int_0^u  P_{0}\big(Z_{m,j}(\eta,v)\big) dv \bv^2dv = \int_0^u \sum_{\ell \in \znum} \text{Cov}\Big(g(\tilde{\mb{X}}_{\ell}(\eta),u,r),g(\tilde{\mb{X}}^\prime_{0}(\eta),u,r)\Big) d \eta.
\end{align*}
\end{proof}

\section{Proof of Theorem \ref{thm:2parproccon}}

\begin{proof}[Proof of \autoref{thm:2parproccon}]

 We make use of corollary in \cite{DZ08} (see also~\cite[thm. 2.3]{DGS21}), stated here for convenience.
\begin{thm} \label{bivfunctconv}
Let $\xi_T, T\ge 1$ be a $\rnum$-valued stochastic process defined on $[0,1]^2$ whose paths are in $D([0,1]^2)$ almost surely. Then $(\xi_T: T \ge 0)$ converges weakly to $\xi$ in $D([0,1]^2)$ if
\begin{enumerate}[label=(\roman*)] 
\item the fidis of $\xi_T$ converge to the corresponding fidis of a process $\xi$;
\item the process $\xi_T$ can be written as the difference of two coordinate-wise non-decreasing processes $\xi^o_T$ and $\xi^\star_T$;
\item there exists constants $\alpha \ge  \beta >2, c \in (0,\infty)$ such that for all $T$, $\E[\xi_T(0,0)]^\alpha \le c$ and a sequence $\Delta_T \to 0$ for which
\[
\E\big[\xi_T(u,r)-\xi_T(u^\prime,r^\prime)\big]^{\alpha} \le c \big\|(u,r)-(u^\prime,r^\prime)\big\|_{\infty}^{\beta} \quad \text{ whenever } \big\|(u,r)-(u^\prime,r^\prime)\big\|_{\infty} \ge \Delta_T;
\]
\item the process $\xi^\star_T$ satisfies
\begin{align*}
\max_{1\le i_1, i_2\le \flo{\Delta^{-1}_T}}& \bv \xi^\star_T\big(i_1 \Delta_T,i_2\Delta_T\big)-\xi^\star_T\big((i_1-1) \Delta_T,i_2\Delta_T\big) \bv
\\& + \bv \xi^\star_T\big(i_1 \Delta_T,i_2\Delta_T\big)-\xi^\star_T\big(i_1 \Delta_T,(i_2-1)\Delta_T\big) \bv \overset{p}{\to } 0.
\end{align*}
\end{enumerate}
\end{thm}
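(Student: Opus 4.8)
The plan is to deduce the weak convergence from the standard decomposition of convergence in a Skorokhod space into convergence of finite-dimensional distributions together with tightness. Condition (i) supplies the former and identifies the candidate limit $\xi$, so the whole argument reduces to establishing tightness of $(\xi_T)$ in $D([0,1]^2)$ under the Skorokhod topology. First I would invoke the multiparameter tightness criterion (in the spirit of Bickel--Wichura / Neuhaus): $(\xi_T)$ is tight once $\{\xi_T(0,0)\}$ is tight on $\rnum$, which is immediate from $\E[\xi_T(0,0)]^\alpha \le c$ in (iii), and once the Skorokhod oscillation modulus over a partition of mesh $\delta$ tends to zero in probability, uniformly in $T$, as $\delta \to 0$. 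The reason the decomposition (ii) is assumed is the key simplification that for a coordinate-wise non-decreasing field the oscillation over a rectangle coincides with its rectangle-increment; writing $\xi_T = \xi_T^o - \xi_T^\star$ and bounding $|\xi_T(x)-\xi_T(y)| \le |\xi_T^o(x)-\xi_T^o(y)| + |\xi_T^\star(x)-\xi_T^\star(y)|$, it therefore suffices to control the partition-increments of the two monotone pieces separately.

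The core estimate is a two-scale control of these increments. At scales coarser than the truncation threshold $T^{-1/\beta}$, I would feed the moment bound (iii) into a M\`oricz-type maximal inequality (see \cite{Mor76}) to pass from the pointwise $L^\alpha$-increment control $\E[\xi_T(u,r)-\xi_T(u',r')]^\alpha \le c\|(u,r)-(u',r')\|_\infty^\alpha$ to a bound on the maximum increment over all rectangles of a mesh-$\delta$ grid. The calibration $\alpha \ge \beta > 2$ is precisely what makes the resulting chaining sum converge over the two-dimensional index set (the moment exponent must beat the dimension $d=2$), so that the maximal increment is uniformly small with high probability once $\delta$ is small. Since this is stated for $\xi_T$, I would transfer it to each monotone part using (ii) and the non-negativity of their rectangle-increments.

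It then remains to absorb the fine scales between the grid $1/T$ and the threshold $T^{-1/\beta}$, which is where monotonicity and condition (iv) combine. For a coordinate-wise non-decreasing field the oscillation over a sub-rectangle is dominated by that over any rectangle containing it, so the coarse control above automatically propagates down to the grid scale for both $\xi_T^o$ and $\xi_T^\star$; what the moment bound cannot see is the residual grid-scale oscillation of the correction term, and this is exactly what (iv) kills, forcing the one-step grid increments of $\xi_T^\star$ to vanish uniformly in probability. Consequently the discretization error $\sup|\xi_T - \widehat\xi_T|$ between $\xi_T$ and its piecewise version on the $1/T$-grid tends to zero in probability, so $\xi_T$ and its grid version share the Skorokhod limit, and tightness follows; combining with (i) yields $\xi_T \medsquiggly \xi$.

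The main obstacle will be the bookkeeping in this scale-bridging step: the moment bound (iii) lives only at scales $\ge T^{-1/\beta}$, while the vanishing-increment hypothesis (iv) lives at the far finer grid scale $1/T$, and neither alone controls the full Skorokhod modulus of the non-monotone field $\xi_T$. The decomposition (ii) is the mechanism reconciling them, monotonicity converting the coarse $L^\alpha$-increment control into control of oscillation at every finer scale, while (iv) disposes of the single genuinely fine-scale quantity, the grid oscillation of $\xi_T^\star$. Making this precise requires care that it is the Skorokhod (jump-allowing) modulus, rather than the uniform modulus, that is being estimated, since the monotone limit $\xi$ may itself carry jumps along coordinate hyperplanes; verifying that the partition-based increment bounds indeed dominate the correct modulus is the delicate technical point.
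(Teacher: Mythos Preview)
The paper does not prove this theorem. It is quoted verbatim as a corollary from Davydov and Zitikis~\cite{DZ08} (with a pointer also to~\cite[Thm.~2.3]{DGS21}), introduced by the sentence ``Our argument makes use of corollary in \cite{DZ08} \ldots\ stated here for convenience,'' and then \emph{applied} in the proof of \autoref{thm:2parproccon} by verifying its four hypotheses for the process $\xi_{m,T}=T^{1/2}M_{m,T}$. So there is no in-paper proof to compare your proposal against.

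That said, your outline is broadly the strategy behind the Davydov--Zitikis result: fidi convergence plus a tightness argument in $D([0,1]^2)$, with the monotone decomposition (ii) used so that oscillations over rectangles reduce to rectangle increments, the moment bound (iii) fed through a maximal inequality at scales above the cutoff $T^{-1/\beta}$, and the grid-step condition (iv) handling the residual fine-scale oscillation. One point to tighten if you pursue this: the $L^\alpha$ increment bound in (iii) is assumed for $\xi_T$, not for the monotone pieces $\xi_T^o$ and $\xi_T^\star$ individually, and your line ``I would transfer it to each monotone part using (ii) and the non-negativity of their rectangle-increments'' is where the actual work in \cite{DZ08} sits; monotonicity alone does not let you split a moment bound on a difference into moment bounds on the summands. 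The correct route is rather to control the Skorokhod modulus of $\xi_T$ directly on the grid via the increment bound, and to use monotonicity only to pass from grid control to control between grid points, which is why (iv) is formulated for $\xi_T^\star$ specifically.
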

Specifically, we apply this to $\xi_T = T^{1/2}(\mathscr{S}_T-\E \mathscr{S}_T)$. In the rest of the proof, we assume without loss of generality that $[0,\mathscr{R}]=[0,1]$; otherwise we may simply show the result for
$\tilde{\xi}_{T}(u,r) =\xi_{T}(u,\mathscr{R} r)$. Convergence of the fidis was already established in   \autoref{thm:limfixr} . Observe that
\begin{align*}
    \xi_T(u,r) = T^{-3/2} \sum_{t,s=1}^{\flo{uT}} h(\tilde{\mb{X}}^{n(T)}_{t}, \tilde{\mb{X}}^{n(T)}_{s},r)-T^{-3/2} \sum_{t,s=1}^{\flo{uT}} \E\big[h(\tilde{\mb{X}}^{n(T)}_{t}, \tilde{\mb{X}}^{n(T)}_{s},r)\big]=:\xi^0_T(u,r)-\xi^\star_T(u,r)
\end{align*}
from which it is clear that (ii) holds.
 To verify (iii), we note that 
\begin{align*}
   \sum_{t,s=1}^{\flo{uT}}-\sum_{t,s=1}^{\flo{u^\prime T}} %&=\sum_{t=1}^{\flo{uT}}\sum_{s=1}^{\flo{uT}}-\sum_{t=1}^{\flo{uT}}\sum_{s=1}^{\flo{u^\prime T}}+\sum_{t=1}^{\flo{uT}}\sum_{s=1}^{\flo{u^\prime T}}-\sum_{t=1}^{\flo{u^\prime T}}\sum_{s=1}^{\flo{u^\prime T}}
   %\\&
   =  2 \Big(\sum_{t=\flo{u^\prime T}+1}^{\flo{uT}}\sum_{s=1}^{t-1}+ \sum_{s=\flo{u^\prime T}+1}^{\flo{uT}}\sum_{t=\flo{u^\prime T}+1}^{s}\Big).
\end{align*}
Therefore,
\begin{align*}
 &   T^{3/2}\Big(\E\bv \xi_{T}(u,r)-\xi_{T}(u^\prime,r)\bv^{2p}\Big)^{1/2p}
%&=
%\Big\| \Big(\sum_{t,s=1}^{\flo{uT}}-\sum_{t,s=1}^{\flo{u^\prime T}} \Big) \hti((\tilde{\mb{X}}^{n(T)}_{t}, \tilde{\mb{X}}^{n(T)}_{s},r)\Big\|^{2p}_{\rnum,2p} 
\\& \le  \Big\| \sum_{t=\flo{u^\prime T}+1}^{\flo{uT}}\sum_{s=1}^{t-1}\hti(\tilde{\mb{X}}^{n(T)}_{t}, \tilde{\mb{X}}^{n(T)}_{s},r)\Big\|_{\rnum,2p} 
 +  \Big\| \sum_{s=\flo{u^\prime T}+1}^{\flo{uT}}\sum_{t=\flo{u^\prime T}+1}^{s}\hti(\tilde{\mb{X}}^{n(T)}_{t}, \tilde{\mb{X}}^{n(T)}_{s},r)\Big\|_{\rnum,2p}~.
\end{align*}
We focus on the first term as the second term can be treated in similar fashion. A similar argument as in \autoref{lem:errorcontrol} yields
\begin{align*}
     \Big\| \sum_{t=\flo{u^\prime T}+1}^{\flo{uT}}\sum_{s=1}^{t-1}\hti(\tilde{\mb{X}}^{n(T)}_{t}, \tilde{\mb{X}}^{n(T)}_{s},r)\Big\|_{\rnum,2p} &\le \sum_{j=0}^{\infty} \Big\| \sum_{t=\flo{u^\prime T}+1}^{\flo{uT}}P_{t-j}\Big(\sum_{s=1}^{t-1}\hti(\tilde{\mb{X}}^{n(T)}_{t}, \tilde{\mb{X}}^{n(T)}_{s},r)\Big)\Big\|_{\rnum,2p} 
        \\& \le \sum_{j=0}^{\infty} \Big( \sum_{t=\flo{u^\prime T}+1}^{\flo{uT}} \Big(\sum_{k=1}^{t-1}\Big\| P_{t-j}\Big(\hti(\tilde{\mb{X}}^{n(T)}_{t}, \tilde{\mb{X}}^{n(T)}_{t-k},r)\Big)\Big\|_{\rnum,2p}\Big)^2 \Big)^{1/2}
               \\& \le (\flo{u T}-\flo{u^\prime T})^{1/2}T \sup_{k\ge 0}\sum_{j=0}^{\infty} \sup_t\Big\| P_{t-j}\Big(\hti(\tilde{\mb{X}}^{n(T)}_{t}, \tilde{\mb{X}}^{n(T)}_{t-k},r)\Big)\Big\|_{\rnum,2p}
               \\& \le 2 C (u-u^\prime)^{1/2} T^{3/2},  
\end{align*}
where we used \autoref{thm:2parproccon}(i) and that  $(\flo{u T}-\flo{u^\prime T}) \le 2(u-u^\prime)T$. Thus, for some constant $c>0$
 \begin{align*}
&\E\bv \xi_{T}(u,r)-\xi_{T}(u^\prime,r)\bv^{2p}  \le c(u-u^\prime)^p. 
 \end{align*}

Furthermore, Minkowski's inequality yields 
 \begin{align*}
   \|\xi_{T}(u,r)-\xi_{T}(u,r^\prime)\|_{\rnum,2p}  
   \le  \sum_{j=1}^{\infty}&\Big\|T^{-3/2} \sum_{t=1}^{\flo{uT}}\sum_{k=1}^{t-1} P_{t-j}\Big(\hti(\tilde{\mb{X}}^{n(T)}_{t}, \tilde{\mb{X}}^{n(T)}_{t-k},r)-\hti(\tilde{\mb{X}}^{n(T)}_{t}, \tilde{\mb{X}}^{n(T)}_{t-k},r^\prime)\Big)\Big\|_{\rnum,2p}
  \\& +  \Big\|T^{-3/2} \sum_{t=1}^{\flo{uT}}\sum_{k=1}^{t-1} P_{t}\Big(\hti(\tilde{\mb{X}}^{n(T)}_{t}, \tilde{\mb{X}}^{n(T)}_{t-k},r)-\hti(\tilde{\mb{X}}^{n(T)}_{t}, \tilde{\mb{X}}^{n(T)}_{t-k},r^\prime)\Big)\Big\|_{\rnum,2p}. \tageq \label{eq:rrprime}
  \end{align*}
For the first term on the right-hand side, assumption (ii) of  \autoref{thm:2parproccon} yields
 \begin{align*}
   & \sum_{j=1}^{\infty}\Big\|T^{-3/2} \sum_{t=1}^{\flo{uT}}\sum_{k=1}^{t-1} P_{t-j}\Big(\hti(\tilde{\mb{X}}^{n(T)}_{t}, \tilde{\mb{X}}^{n(T)}_{t-k},r)-\hti(\tilde{\mb{X}}^{n(T)}_{t}, \tilde{\mb{X}}^{n(T)}_{t-k},r^\prime)\Big)\Big\|_{\rnum,2p}\\&\le    \sum_{j=1}^{\infty} \sup_t\min\Big(\sum_{l \in \{r,r^\prime \}}\Big\| P_{t-j}\Big(\hti(\tilde{\mb{X}}^{n(T)}_{t}, \tilde{\mb{X}}^{n(T)}_{t-k},l)\Big)\Big\|_{\rnum,2p},
    \Big\|P_{t-j}\Big( \hti(\tilde{\mb{X}}^{n(T)}_{t}, \tilde{\mb{X}}^{n(T)}_{t-k},r) - \hti(\tilde{\mb{X}}^{n(T)}_{t}, \tilde{\mb{X}}^{n(T)}_{t-k},r^\prime)\Big)\Big\|_{\rnum,2p}\Big)
    \\&\le    \sum_{j=1}^{\infty}\sup_t \min\Big( \sum_{l \in \{r,r^\prime \}}\Big\| P_{t-j}\big(\hti(\tilde{\mb{X}}^{n(T)}_{t}, \tilde{\mb{X}}^{n(T)}_{t-k},l)\big)\Big\|_{\rnum,2p} \,,\, 
    C(r-r^\prime)\Big)
    \\&   \le  2   \sum_{j \le (r-r^\prime)^{-1/2}}
    C(r-r^\prime)
    + \sum_{j > (r-r^\prime)^{-1/2}}   \sup_t\sum_{l \in \{r,r^\prime \}}\Big\| P_{t-j}\big(\hti(\tilde{\mb{X}}^{n(T)}_{t}, \tilde{\mb{X}}^{n(T)}_{t-k},l)\big)\Big\|_{\rnum,2p} \,,\,
    \\& \le C(r-r^\prime)^{1/2}+ \sum_{j >(r-r^\prime)^{-1/2}} \frac{j}{j} \sup_t \sum_{l \in \{r,r^\prime \}}\Big\| P_{t-j}\big(\hti(\tilde{\mb{X}}^{n(T)}_{t}, \tilde{\mb{X}}^{n(T)}_{t-k},l)\big)\Big\|_{\rnum,2p}
    \\& \le K(r-r^\prime)^{1/2},
 \end{align*}
 for some constant $K$, where we used assumption (i). 
To consider the second term of \eqref{eq:rrprime}, Rosenthals' inequality yields for $p >1$,
 \begin{align*}
   &\Big\|T^{-3/2} \sum_{t=1}^{\flo{uT}}\sum_{k=1}^{t-1} P_{t}\Big(\hti(\tilde{\mb{X}}^{n(T)}_{t}, \tilde{\mb{X}}^{n(T)}_{t-k},r)-\hti(\tilde{\mb{X}}^{n(T)}_{t}, \tilde{\mb{X}}^{n(T)}_{t-k},r^\prime)\Big)\Big\|^{2p}_{\rnum,2p}
 %  = \Big\|T^{-3/2} \sum_{t=1}^{\flo{uT}}P_{t}\Big(Y_{t}(r,r^\prime)\Big)\Big\|^{2p}_{\rnum,2p}
   \\&\le  C T^{-3p}  \Big\{
   \E\Big[\Big(\sum_{t=1}^{\flo{uT}} \E\Big[P^2_t\big(Y_{t}(r,r^\prime)\big)\bv \G_{t-1}\Big]\Big)^p\Big]
+ \sum_{t=1}^{\flo{uT}} \E\bv P_{t}\Big(Y_{t}(r,r^\prime)\Big)\bv^{2p}\Big\}, \tageq \label{eq:rosenthal}
  \end{align*}
  where we set $$Y_{t}(r,r^\prime)= \sum_{k=1}^{t-1}  \big(\hti(\tilde{\mb{X}}^{n(T)}_{t}, \tilde{\mb{X}}^{n(T)}_{t-k},r)-\hti(\tilde{\mb{X}}^{n(T)}_{t}, \tilde{\mb{X}}^{n(T)}_{t-k},r^\prime)\big).$$
For the second term of \eqref{eq:rosenthal} we find with $\Delta_T=1/T$,
\[
C T^{-3p+2p+1}(r-r^\prime)  \le \tilde{C}(r-r^\prime)^{p-1+1} = \tilde{C}(r-r^\prime)^p,
\]
for some constant $\tilde{C}>0$. 
For the first term of \eqref{eq:rosenthal}, we note that almost surely
\begin{align*}
\E[ P^2_t(T^{-1}Y_{t}(r,r^\prime))|\G_{t-1}] 
& = \E[ P_t(T^{-1}Y_{t}(r,r^\prime)) T^{-1}Y_{t}(r,r^\prime)|\G_{t-1}]
%\\&= \E[ T^{-2}Y^2_{t}(r,r^\prime) |\G_{t-1}]- (\E[T^{-1}Y_{t}(r,r^\prime) |\G_{t-1}])^2
\\& \le \E[ T^{-2}Y^2_{t}(r,r^\prime)|\G_{t-1}].
\end{align*}
Therefore,
\begin{align*}
    C T^{-p}  
   \E\Big[\Big(\sum_{t=1}^{\flo{uT}}\E[P^2_t(T^{-1}Y_{t}(r,r^\prime))|\G_{t-1}]\Big)^p\Big]
 &\le C T^{-p}  
   \E\Big[\Big(\sum_{t=1}^{\flo{uT}} \E[T^{-2}Y^2_{t}(r,r^\prime)|\G_{t-1}]\Big)^p\Big].
  \end{align*}
Furthermore, Jensen's inequality and condition (ii) of  \autoref{thm:2parproccon} yield
\begin{align*}
   \E[T^{-2}Y^2_{t}(r,r^\prime)|\G_{t-1}]& = \E[T^{-2}\sum_{k,k^\prime} Y_{k,t,r,r^\prime}Y_{k^\prime,t,r,r^\prime}|\G_{t-1}] \le \frac{1}{T}\sum_{k=1}^{T} \E[Y^2_{k,t,r,r^\prime}|\G_{t-1}]
  \\&  = \frac{1}{T}\sum_{k=1}^{T} \pr\Big(\partial_{\mathscr{B}}\big(\sh(\tilde{\mb{X}}^{n(T)}_{t}),
\sh(\tilde{\mb{X}}^{n(T)}_{t-k})\big)  \in (r,r^\prime]\bv\G_{t-1}\Big)
   =  O(r-r^{\prime}),
\end{align*}
almost surely. For the first term of \eqref{eq:rosenthal}, we therefore obtain
\[
 C T^{-p}  
   \E\Big[\Big(\sum_{t} \E[T^{-2}Y^2_{t}(r,r^\prime)|\G_{t-1}]\Big)^p\Big] \le c(r-r^{\prime})^p, 
\]
Combining terms, we find for \eqref{eq:rrprime}
  \begin{align*}
&\E\bv \xi_{T}(u,r)-\xi_{T}(u,r^\prime)\bv^{2p}  \le c(r-r^\prime)^{p}.  
 \end{align*}
Finally, we consider (iv). Denote
 \[
 \Upsilon_{T,t}(u,r) = T^{-1} \sum_{s=1}^{\flo{uT}} \E[h(\tilde{\mb{X}}^{n(T)}_{t}, \tilde{\mb{X}}^{n(T)}_{s},r)]~.
 \]
 Firstly, from the definition of \eqref{eq:Ymt} and the fact that the terms involved are bounded uniformly by 2,
\begin{align*}
\E &  \max_{1\le i_1,i_2 \le T} \bv \frac{1}{T^{1/2}}\sum_{t=1}^{i_1} \Upsilon_{T,t}(\frac{i_1}{T},\frac{i_2}{T})-\sum_{t=1}^{i_1-1} \Upsilon_{T,t}(\frac{i_1-1}{T},\frac{i_2}{T})\bv\\ & \le \frac{1}{T^{1/2}}  \E \max_{1\le i_1,i_2 \le T}\Big\{\sum_{t=1}^{i_1} 
   \bv\Upsilon_{T,t}(\frac{i_1}{T},\frac{i_2}{T})-\Upsilon_{T,t}(\frac{i_1-1}{T},\frac{i_2}{T})\bv
\\& \phantom{\le \frac{1}{T^{1/2}}  \E \max_{1\le i_1,i_2 \le T}\Big\{}+\bv \sum_{t=1}^{i_1} \Upsilon_{T,t}(\frac{i_1-1}{T},\frac{i_2}{T})-\sum_{t=1}^{i_1-1}\Upsilon_{T,t}(\frac{i_1-1}{T},\frac{i_2}{T})\bv\Big\}
= O(\frac{1}{T^{1/2}}). 
 \tageq \label{eq:xistartime}
\end{align*}
Furthermore, the triangle yields 
\begin{align*}
& \E  \max_{1\le i_1, i_2\le T} \bv \xi^\star_T\big(\frac{i_1}{T},\frac{i_2}{T}\big)-\xi^\star_T\big(\frac{i_1}{T},\frac{i_2-1}{T}\big) \bv \\&\le  2 \frac{1}{T^{3/2}}\E   \max_{1\le i_1,i_2 \le T} \sum_{t=1}^{i_1}\sum_{k=1}^{t-1}  \bv \E\big[h(\tilde{\mb{X}}^{n(T)}_{t}, \tilde{\mb{X}}^{n(T)}_{t-k},\frac{i_2}{T})\big]- \E\big[h(\tilde{\mb{X}}^{n(T)}_{t}, \tilde{\mb{X}}^{n(T)}_{t-k},\frac{i_2-1}{T})\big]\bv.
\end{align*}
For the  summand, the tower property and \eqref{densuni1} yield
\begin{align*}
 \bv \E\big[h(\tilde{\mb{X}}^{n(T)}_{t}, \tilde{\mb{X}}^{n(T)}_{t-k},b)\big]- \E\big[h(\tilde{\mb{X}}^{n(T)}_{t}, \tilde{\mb{X}}^{n(T)}_{t-k},a)\big]\bv \tageq\label{eq:mm3}
&=\E\Big[\pr\Big(\partial_{\mathscr{B}}\big(\sh(\tilde{\mb{X}}^{n(T)}_{t}),
\sh(\tilde{\mb{X}}^{n(T)}_{t-k})\big) \in (a,b]\bv \G_{t-1}\Big)\Big] 
\\&\le C^\prime |b-a|.
\end{align*}
Hence, 
\begin{align*}
&\max_{1\le i_1, i_2\le T} \bv \xi^\star_T\big(\frac{i_1}{T},\frac{i_2}{T}\big)-\xi^\star_T\big(\frac{i_1}{T},\frac{i_2-1}{T}\big)\bv \le 
T^{-3/2} \max_{1\le i_1, i_2\le T} \sum_{t,s=1}^{i_1} C^\prime T^{-1} = O(T^{-1/2}). \tageq \label{eq:xistarrad}
\end{align*}
Thus (iv) is implied by \eqref{eq:xistartime} and \eqref{eq:xistarrad}. 
\end{proof}

\end{document}